\newtheorem{lemma}{Lemma}
\newtheorem{theorem}{Theorem}
\newtheorem{definition}{Definition}
\newtheorem{corollary}{Corollary}
\newtheorem{remark}{Remark}
\def\C{\mathbb C}
\def\R{\mathbb R}
\def\Z{\mathbb Z}
\def\N{\mathbb N}
\def\I{\mathcal I}
\def\F{\mathcal F}
\def\E{\mathcal E}
\def\D{\mathcal D}
\def\S{\mathcal S}
\def\p{\partial}
\def\e{\epsilon}
\def\grad{\hbox{grad}}
\def\sup{\mathop{\rm sup}}
\def\Im{\mathop{\rm Im}}
\def\supp{\mathop{\rm supp}}
\def\Tr{{\rm Tr}}
\def\diam{\mathop{\rm diam}}
\def\l{\left}
\def\r{\right}
\newcommand{\pair}[1]{\left\langle #1 \right\rangle}
\newcommand{\norm}[1]{\left\|#1 \right\|}
\def\bar{\overline}
\def\possiblebar{}
\def \ba {\begin {eqnarray*} }
\def \ea {\end {eqnarray*} }
\def \beq {\begin {eqnarray}}
\def \eeq {\end {eqnarray}}
\begin{document}
\title{An inverse problem for the wave equation with one  
measurement and the pseudorandom noise}
\author{Tapio Helin$^1$, Matti Lassas$^{2}$ and Lauri Oksanen$^2$}
\date{Nov. 10, 2010} 
\footnotetext[1]{Johann Radon Institute for Computational and Applied Mathematics (RICAM),
Austrian Academy of Sciences,
Altenbergerstra{\ss}e 69,
A-4040 Linz, Austria.
Email: tapio.helin$@$oeaw.ac.at.
} 
\footnotetext[2]{Department of Mathematics and Statistics, 
P.O. Box 68 (Gustaf H\"allstr\"omin katu 2b)
FI-00014 University of Helsinki.
Email: matti.lassas$@$helsinki.fi,
lauri.oksanen$@$helsinki.fi.
} 
\maketitle

{\bf Abstract.} {\it 
We consider the wave equation $(\p_t^2-\Delta_g)u(t,x)=f(t,x)$, in $\R^n$, $u|_{\R_-\times \R^n}=0$, 
where the metric $g=(g_{jk}(x))_{j,k=1}^n$ is known outside an open and bounded set $M\subset \R^n$
with smooth boundary $\p M$. 
We define a deterministic source $f(t,x)$ called the pseudorandom noise as a sum of point sources, 
$f(t,x)=\sum_{j=1}^\infty a_j\delta_{x_j}(x)\delta(t)$,
where the points $x_j,\ j\in\Z_+$, form a dense set on $\p M$. 
We show that when the weights $a_j$ are chosen
appropriately, $u|_{\R\times \p M}$ determines the scattering relation on $\p M$,
that is, it determines for all geodesics which pass through $M$
the travel times together with the entering and exit points and directions.
The wave $u(t,x)$ contains the singularities produced by all point sources,
but when $a_j=\lambda^{-\lambda^{j}}$ for some $\lambda>1$,
we can 
trace back the point source that produced a given singularity in the data.
This gives us the distance in $(\R^n, g)$ between a source point $x_j$ and an arbitrary point $y \in \p M$. 
In particular, if $(\bar M,g)$ is a simple Riemannian manifold and $g$ is conformally Euclidian in $\bar M$, 
these distances are known to determine the metric $g$ in $M$. 
In the case when $(\bar M,g)$ is non-simple we present a more detailed analysis of the wave fronts 
yielding the scattering relation on $\p M$.
}


\section{Introduction}

In this paper we consider an inverse problem for the wave equation
\begin{align*} 
&(\p_t^2 - \Delta_g) u(t,x) = f(t,x) 
\quad {\rm in} \; (0, \infty) \times \R^n, 
\\&u|_{t=0}=\p_t u|_{t=0}=0, \nonumber
\end{align*}
where $\Delta_g$ is 
the Laplace-Beltrami operator corresponding to a Riemannian
metric $g(x)=[g_{jk}(x)]_{j,k=1}^n$, that is
\begin{equation*}
\Delta_g u=\sum_{j,k=1}^n |g|^{-1/2}\frac \p{\p x^j}\left( |g|^{1/2}g^{jk}\frac \p{\p x^k} u\right),
\end{equation*}
where $|g|=\det (g_{jk})$ and $[g^{jk}]_{j,k=1}^n=g(x)^{-1}$ is the inverse matrix of $[g_{jk}(x)]_{j,k=1}^n$.

We assume that $g_{jk} \in C^\infty (\R^n)$
and that there are $c_1,c_2>0$ such that
\begin{equation} 
\label{assumption_1}
c_1 |\xi|^2 
\leq \sum_{j.k=1}^n g_{jk}(x) \xi^j\xi^k 
\leq c_2 |\xi|^2,
\quad x, \xi \in \R^n.
\end{equation}
Moreover, we assume that the metric $g$ is known outside an
open and bounded set $M \subset \R^n$ 
having a $C^\infty$ smooth boundary $\p M$.

Denote by $d_{\possiblebar M}(x,y)=d_{\possiblebar M,g}(x,y)$, $x, y \in \bar M$, 
the distance function of Riemannian manifold 
$(\bar M, g)$, where $g$ is considered as its restriction to $\bar M$.
Let $T>\diam(M)$, where $\diam(M) = \max \{d_{\possiblebar M}(x,y) ; x,y \in \bar M\}$.

We choose the origin of the time axis so that the source $f$ is active at time $t=0$.
To ensure compatibility with the the initial conditions we let $T_0 < 0$ and 
define the measurement map $L=L_g$,
\begin{equation}
\label{L_map}
L : C^\infty_c(T_0, T) \otimes C_c^\infty(\R^n)
\to C^\infty( (T_0, T) \times \p M),\quad Lf= u|_{(T_0,T) \times \p M},
\end{equation}
where $u$ is the solution of the wave equation 
\begin{align} \label{eq:wave_eq_1}
&(\p_t^2 - \Delta_g) u(t,x) = f(t,x) 
\quad {\rm in} \; (T_0, T) \times \R^n, 
\\&u|_{t=T_0}=\p_t u|_{t=T_0}=0. \nonumber
\end{align}
Above, $C^\infty_c(T_0, T)$  denotes the space of
smooth functions having compact support in 
$(T_0, T)$. Its  dual space, the space of generalized functions or distributions,
is denoted by ${\cal D}'(T_0,T)$. Moreover, for functions
$\phi\in C^\infty_c(T_0, T)$ and $\psi\in C_c^\infty(\R^n)$ we denote
their pointwise product by $(\phi\otimes \psi)(t,x)=\phi(t)\psi(x)$.  

We remark that the assumption (\ref{assumption_1}) together with the finite speed of propagation for the wave equation
imply that the measurement $Lf$ does not depend on $g_{jk}(x)$, for $|x| > R$, when $R$ is sufficiently large. 
Thus we may assume without loss of generality
that all the partial derivatives $\p_x^\alpha g_{jk}$ are bounded on $\R^n$.

Let $x_j\in \p M$, $j = 1, 2, \dots$,
be a dense sequence of points in $\p M$, and
let us consider point sources
\begin{equation*}
f_{x_j}(t,x) := \delta(t) \delta_{x_j}(x), \quad j = 1, 2, \dots.
\end{equation*}
In order to study the measurements $Lf_{x_j}$,
we will use Sobolev spaces, see \cite{TRI}, 
\begin{align*}
H^s_p(\R^d) &:= \{f \in \S'(\R^d); \norm{f}_{H^s_p(\R^d)} := 
    \norm{(1 - \Delta)^{s/2} f}_{L^p(\R^d)} < +\infty \},
\\ \widetilde H^s_p(U) &:= \{f \in H^s_p(\R^d) ; \supp f \subset \bar U \},
\\ H^s_p(U) &:= \{f \in \D'(U); \text{$f = h|_U$ for some $h \in H^s_p(\R^d)$} \},
\end{align*}
where $U \subset \R^d$ is open and $s \in \R$. When $p = 2$ we omit the subscript $p$ in our notation,
that is, we denote $H^s(U)=H^s_2(U)$ etc. 
Moreover, we use projective topology on the tensor product $X\otimes Y$
of two Banach $X$ and $Y$, that is,
$\|z\|_{X \otimes Y} := \inf \sum_j \|x_j\|_X  \|y_j\|_Y,$ where
infimum is taken over all representations $z=  \sum_j x_j\otimes y_j$. We also
use projective topology on tensor products of locally convex spaces,
 see e.g.\ \cite[Def.\ 43.2]{Treves}. 
The measurement $Lf_{x_j}$ can be defined in the sense of the following lemma.


\begin{lemma}
\label{lem:extending_measurement_map}
Let $p \in (1, \frac{n}{n-1})$ and 
let $m \in \N$ satisfy $m > \frac{n + 1}{4}$.
Then the measurement operator $L$, defined in (\ref{L_map})
has a unique continuous extension 
\begin{equation*}
L:\widetilde H^{-1} (T_0,T) \otimes H^{-1}_p (\R^n) 
\to \D'((T_0,T) \times \p M).
\end{equation*}
\end{lemma}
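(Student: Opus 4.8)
\medskip
\noindent\emph{Proof plan.}
The plan is to reduce, by density, to an a priori estimate; to realize that estimate through an adjoint single-layer wave problem; and then to prove a regularity estimate for the adjoint wave --- the last being the only genuinely delicate step, and the one where the hypotheses on $p$ and $m$ are used. Since $C^\infty_c(T_0,T)$ is dense in $\widetilde H^{-1}(T_0,T)$ and $C^\infty_c(\R^n)$ is dense in $H^{-1}_p(\R^n)$, the algebraic tensor product $C^\infty_c(T_0,T)\otimes C^\infty_c(\R^n)$ is dense in $\widetilde H^{-1}(T_0,T)\otimes H^{-1}_p(\R^n)$, and $\D'((T_0,T)\times\p M)$ is complete; hence it is enough (and then the extension is unique) to prove that for every $\psi\in C^\infty_c((T_0,T)\times\p M)$ there are $N=N(n)\in\N$ and $C>0$, with $C$ bounded by a fixed $C^N$-seminorm of $\psi$, such that
\begin{equation*}
|\langle Lf,\psi\rangle|\le C\,\|f\|_{\widetilde H^{-1}(T_0,T)\otimes H^{-1}_p(\R^n)},\qquad f\in C^\infty_c(T_0,T)\otimes C^\infty_c(\R^n).
\end{equation*}

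To obtain this I would fix $\psi$ and introduce the adjoint state $v=v_\psi$ solving $(\p_t^2-\Delta_g)v=\psi\,\delta_{\p M}$ in $(T_0,T)\times\R^n$ with $v|_{t=T}=\p_t v|_{t=T}=0$, where $\psi\,\delta_{\p M}$ is the single-layer distribution $\phi\mapsto\int_{(T_0,T)\times\p M}\psi\,\phi\,dS\,dt$. For $f\in C^\infty_c(T_0,T)\otimes C^\infty_c(\R^n)$ the solution $u$ of (\ref{eq:wave_eq_1}) is smooth with vanishing Cauchy data at $t=T_0$, so Green's formula for $\p_t^2-\Delta_g$ on $(T_0,T)\times\R^n$ --- whose boundary terms vanish at $t=T$ (since $v,\p_t v$ do) and at $t=T_0$ (since $u,\p_t u$ do) --- gives $\langle Lf,\psi\rangle=\langle u|_{(T_0,T)\times\p M},\psi\rangle=\int_{(T_0,T)\times\R^n}f\,v\,dx\,dt$. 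Granting (see below) that $v$ has compact support in $x$ and $v\in C^1([T_0,T];H^1_{p'}(\R^n))$, with $p'=p/(p-1)\in(n,\infty)$ and $\|v\|_{C^1([T_0,T];H^1_{p'}(\R^n))}\le C\|\psi\|_{C^N}$, the estimate follows by duality: writing $f=\sum_j\phi_j\otimes\chi_j$, integrating first in $x$ then in $t$, and using $(\widetilde H^{-1}(T_0,T))'=H^1(T_0,T)$ and $(H^{-1}_p(\R^n))'=H^1_{p'}(\R^n)$, one gets $|\int fv|\le C\sum_j\|\phi_j\|_{\widetilde H^{-1}(T_0,T)}\|\chi_j\|_{H^{-1}_p(\R^n)}\|v\|_{C^1([T_0,T];H^1_{p'}(\R^n))}$, and then takes the infimum over representations.

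The heart of the matter is thus the regularity $v\in C^1([T_0,T];H^1_{p'}(\R^n))$. The source $\psi\,\delta_{\p M}$ lies in $C^\infty([T_0,T];H^{-1/2-\e}(\R^n))$ for every $\e>0$ and in nothing better, so the plain energy estimate only yields $v\in C([T_0,T];H^{1/2-\e}(\R^n))$, which falls well short of $H^1_{p'}(\R^n)$; getting past this is the main obstacle. What saves it is that $\R\times\p M$ is timelike (non-characteristic) for $\p_t^2-\Delta_g$, so the operator is elliptic in the codirections conormal to $\R\times\p M$ and $v$ is, up to smoother terms, a single-layer potential of $\psi$. Concretely, from $-\Delta_g v=\psi\,\delta_{\p M}-\p_t^2 v$ and the boundedness of $(1-\Delta_g)^{-1}$ from $H^s_q(\R^n)$ to $H^{s+2}_q(\R^n)$ ($1<q<\infty$), iterating $m$ times gives
\begin{equation*}
v=(1-\Delta_g)^{-m}(1-\p_t^2)^m v+\sum_{k=0}^{m-1}(1-\Delta_g)^{-(k+1)}\big((1-\p_t^2)^k\psi\cdot\delta_{\p M}\big).
\end{equation*}
Each $\p_t^{2j}v$ again solves a wave equation with a smooth-density single-layer source, so $\p_t^{2j}v\in C([T_0,T];H^{1/2-\e}(\R^n))$ with norm controlled by $\|\psi\|_{C^{2j}}$; hence the first term lies in $C([T_0,T];H^{2m+1/2-\e}(\R^n))$, and the hypothesis $m>\tfrac{n+1}{4}$ is exactly what makes this embed --- for $\e$ small --- into $C([T_0,T];H^1_{p'}(\R^n))$. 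In the sum, $\delta_{\p M}\in H^{-1}_{p'}(\R^n)$ by the trace inequality ($\p M$ being compact), so each summand lies in $H^1_{p'}(\R^n)$ uniformly in $t$ by the $L^{p'}$-elliptic regularity just quoted; here $p<\tfrac{n}{n-1}$, i.e.\ $p'>n$, enters, ensuring the Sobolev embeddings used and that $H^1_{p'}(\R^n)\hookrightarrow C(\R^n)$, which in turn is what makes the data space $H^{-1}_p(\R^n)$ contain the point masses $\delta_{x_j}$ and keeps the pairing legitimate after extension. Differentiating once more in $t$ and repeating gives the $C^1$-in-$t$ statement; finite speed of propagation together with $\supp\psi\Subset(T_0,T)\times\p M$ gives the compact $x$-support; and since every step is quantitative in $\|\psi\|_{C^N}$, the a priori bound, hence the lemma, follows. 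The single genuinely delicate point is this replacement of the energy estimate for the rough source $\psi\,\delta_{\p M}$ by the elliptic (single-layer) smoothing transverse to the timelike hypersurface $\R\times\p M$; everything else is Sobolev bookkeeping.
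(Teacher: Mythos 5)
Your argument is correct, but it reaches the estimate by duality whereas the paper works directly with the forward solution. You introduce the adjoint state $v$ with single-layer source $\psi\,\delta_{\p M}$, reduce the lemma to the regularity statement $v\in C^1([T_0,T];H^1_{p'}(\R^n))$, and prove that statement via the commutation identity $v=(1-\Delta_g)^{-m}(1-\p_t^2)^m v+\sum_{k}(1-\Delta_g)^{-(k+1)}((1-\p_t^2)^k\psi\cdot\delta_{\p M})$. The paper applies the very same algebraic identity, but to $Wf$ rather than to $v$: it writes $Lf=\p_t(z-\p_t^2)^m\Tr(z-\Delta_g)^{-m}W\I f+\sum_{j}(z-\p_t^2)^j\Tr(z-\Delta_g)^{-1-j}f$, absorbs the $\widetilde H^{-1}$ time regularity of $f$ through the antiderivative $\I$ (so that $W\I f\in C([T_0,T];H^{-n/2}(\R^n))$ via the embedding $H^{-1}_p\hookrightarrow H^{-1-n/2}$), and then takes boundary traces term by term. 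The exponent count $-n/2+2m>1/2$ is exactly where $m>\frac{n+1}{4}$ enters in both arguments, and the trace theorem $H^1_p(\R^n)\to B^{1-1/p}_{p,p}(\p M)$ plays in your sum terms precisely the dual role it plays in theirs. What your route buys is a transparent conceptual reason for the extension --- single-layer wave potentials with smooth densities are $C^1$ in time with values in $H^1_{p'}$ --- at the price of justifying Green's identity for the rough adjoint state and invoking the duality $(\widetilde H^{-1}(T_0,T))'=H^1(T_0,T)$; the paper's route never leaves the forward problem, so every operator it composes acts on objects already known to be classical solutions. One side remark of yours is slightly misplaced: the condition $p'>n$ is not needed for the single-layer regularity (any $p>1$ suffices there); it is needed so that the point masses $\delta_{x_j}$ lie in $H^{-1}_p(\R^n)$, i.e.\ so that the pseudorandom noise belongs to the domain of the extended $L$. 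This does not affect the correctness of your proof.
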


We will prove Lemma \ref{lem:extending_measurement_map} and
other results presented in introduction in Sections \ref{sec:measurement_map}-\ref{sec:travel_times}.

%
In this paper we study
a single measurement $L h_0$ that simultaneously combines all the measurements 
$L f_{x_j}$ by adding those together with appropriate weights.
When the measurements $L f_{x_j}$ are summed together,
to the authors knowledge, there are no algorithms which could filter 
the value of a particular measurement from the sum.
We will ask, however, can we find 
the essential features given by these measurements, 
like the travel times between points on $\p M$, so
that the metric could be determined under certain geometric conditions.

In Section \ref{sec:pseudo_white_noise}, Definition \ref{def:pseudorandom_noise}, 
we construct a specific function $h_0(t,x)$, called {\it pseudorandom noise}, 
so that $Lh_0$ determines the {\it scattering relation} $\Sigma_{M,g}$ for
the manifold $(\bar M,g)$. 
The scattering relation has been efficiently used to solve several 
geometric inverse problems \cite{lens2,lens3,lens1,lens4}.  

To define the scattering relation, let $TM$ 
denote the tangent space of $M$ 
and let $\dot \gamma$ denote the tangent vector of a smooth curve $\gamma : [a, b] \to M$.
Let $SM=\{(x,\xi)\in TM;\ \|\xi\|_g=1\}$ be the unit sphere bundle on $M$ and define
\begin{equation*}
\p_\pm SM=\{(x,\xi)\in SM;\ x\in \p M,\ \mp(\nu,\xi)_g>0\}, 
\end{equation*}
where $\nu$ is the exterior normal vector of $\p M$. 
Moreover, let $\tau(x,\xi)$ be the infimum of the set 
$\{ t\in (0, \infty] ; \gamma_{x,\xi}(t) \in  \p M \}$,
where $\gamma_{x,\xi}$ denotes the geodesic with initial data $(x,\xi)\in TM$.
We define the infimum of empty set to be $+\infty$.

The scattering relation is the map $\Sigma=\Sigma_{M,g}$,
\begin{equation*}
\Sigma:\D(\Sigma)\to \overline{\p_+SM} \times \R ,\quad \D(\Sigma)=\{(x,\xi)\in \p_-SM;\ \tau(x,\xi)<\infty\}
\end{equation*}
defined by $\Sigma(x,\xi)=(\gamma_{x,\xi}(\tau(x,\xi)),\dot \gamma_{x,\xi}(\tau(x,\xi)), \tau(x,\xi))$.

Our main result is the following.
\begin{theorem}
\label{main_thm} 
Let $M \subset \R^n$, $n\geq 2$ be an open and bounded set having a $C^\infty$ smooth boundary.
Then there is a generalized function $h_0(t,x)$ such that it is supported on 
$\{0\} \times \p M$ and has the following properties:
Assume that  $g_{jk},g'_{jk}  \in C^\infty (\R^n)$ are two Riemannian metric tensors
satisfying (\ref{assumption_1}). 
Moreover, assume that $g_{jk}(x)=g'_{jk}(x) $ for $x\in \R^n\setminus M$.
Let $T>\max(\diam(M,g), \diam(M,g'))$, $T_0<0$, and assume that
\begin{equation*}
L_g h_0 = L_{g'} h_0 \quad \hbox{on }(T_0, T) \times \p M.
\end{equation*}
Then the scattering relations $\Sigma_{M,g}$ and $\Sigma_{M',g'}$ of Riemannian manifolds 
$(M,g)$ and  $(M,g')$ are the same.  In particular,
if $(\bar M,g)$ and $(\bar M,g')$ are simple, 
the restrictions of the distance functions on the boundary satisfy
$d_{\possiblebar M,g}(x,y)=d_{\possiblebar M,g'}(x,y)$ for $x,y\in \p M$.
\end{theorem}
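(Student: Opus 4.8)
The plan is to construct $h_0$ explicitly as the pseudorandom noise $h_0(t,x)=\sum_{j=1}^\infty a_j\delta(t)\delta_{x_j}(x)$ with weights $a_j=\lambda^{-\lambda^j}$ for a suitable $\lambda>1$ and a fixed dense sequence $(x_j)_{j=1}^\infty\subset\p M$, so that $h_0$ lies in $\widetilde H^{-1}(T_0,T)\otimes H^{-1}_p(\R^n)$ and $L_gh_0$ is well defined by Lemma~\ref{lem:extending_measurement_map}. The key point is that $L_gh_0=\sum_j a_j\, L_g f_{x_j}$, and each summand $u_j:=L_gf_{x_j}$ is the restriction to $(T_0,T)\times\p M$ of the fundamental-type solution with a single point source at $x_j$; by propagation of singularities for $\p_t^2-\Delta_g$, the wave front set of $u_j$ is carried by the bicharacteristics (i.e.\ geodesics of $g$ in $\R^n$) issued from $(0,x_j)$, and the leading singularity in the data reaching a point $y\in\p M$ at time $t$ occurs precisely when $t=d_g(x_j,y)$ along a geodesic that exits through $\p M$. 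So each $L_gf_{x_j}$ encodes, through its singular support on $\p M$, the boundary distance function $y\mapsto d_g(x_j,y)$ together with the exit directions, and more generally (for nonsimple $(\bar M,g)$) the full family of geodesics through $M$ with their travel times and entry/exit data, i.e.\ the scattering relation.

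The heart of the argument is a \emph{separation-of-scales} (or decoupling) lemma: because the weights decay super-exponentially, $a_{j+1}/a_j=\lambda^{-(\lambda^{j+1}-\lambda^j)}\to 0$ faster than any exponential, the contribution of the single source $x_j$ to $L_gh_0$ dominates, near its own singularities and at a quantitatively controlled amplitude scale $a_j$, the pooled contribution $\sum_{k\ne j}a_k L_gf_{x_k}$ of all the other sources. Concretely, I would show that one can recover, from $L_gh_0$ alone, the index $j$ and hence the measurement $L_gf_{x_j}$ up to a smoother remainder, by inspecting the data microlocally: the singularity produced by $x_j$ sits at a definite amplitude order determined by $a_j$, while the sum of the tails $\sum_{k>j}a_k(\cdots)$ is of strictly smaller order and $\sum_{k<j}a_k(\cdots)$ has strictly larger amplitude but its singular support is (generically, after perturbing the $x_j$) disjoint from that of $x_j$ at the relevant points. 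Combining these with a conormal/Sobolev-regularity bookkeeping (using $p<\frac{n}{n-1}$ and $m>\frac{n+1}{4}$ as in Lemma~\ref{lem:extending_measurement_map}) one extracts, for every $j$, the location of the singularities of $u_j$ on $(T_0,T)\times\p M$, hence $d_g(x_j,y)$ for all $y\in\p M$ and, reading the wavefront directions, the exit covectors.

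Once all $L_gf_{x_j}$ are recovered from $L_gh_0$, density of $(x_j)$ in $\p M$ and continuity of the boundary distance function give $d_g(x,y)$ for all $x,y\in\p M$ in the simple case; in the nonsimple case a finer analysis of the conormal wavefronts --- tracking each geodesic segment, its length, and its entry/exit points and directions --- reconstructs the scattering relation $\Sigma_{M,g}$ itself. Since $g=g'$ outside $M$ and $L_gh_0=L_{g'}h_0$, the same reconstruction applied to both yields $\Sigma_{M,g}=\Sigma_{M,g'}$, and then $d_{\possiblebar M,g}|_{\p M\times\p M}=d_{\possiblebar M,g'}|_{\p M\times\p M}$ when both manifolds are simple, because for a simple manifold the boundary distance function is determined by (indeed equivalent to) the scattering relation.

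The main obstacle I anticipate is making the decoupling rigorous: propagation of singularities is a qualitative statement, so I must upgrade it to a quantitative one controlling the \emph{amplitude} (principal symbol size) of each conormal singularity in $L_gf_{x_j}$ near the arrival point, and simultaneously control how the geometrically ``earlier'' sources $x_k$, $k<j$, could conspire to mask the $x_j$-singularity. This is where the doubly-exponential choice $a_j=\lambda^{-\lambda^j}$ is essential, and where one likely needs: (i) a careful parametrix for the point-source solution near $\p M$ expressing $u_j$ as a sum of conormal distributions with computable symbols; (ii) a genericity/perturbation step ensuring the relevant singular supports of distinct sources are transverse or disjoint at the points being probed; and (iii) a clean statement, proved alongside Lemma~\ref{lem:extending_measurement_map}, that the tail $\sum_{k>N}a_k L_gf_{x_k}$ converges in a space of distributions whose ``strength'' is $o(a_N)$, so that it cannot create or cancel an order-$a_N$ singularity.
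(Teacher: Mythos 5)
Your overall strategy (the same source $h_0=\sum_j a_j\delta(t)\delta_{x_j}$ with $a_j=\lambda^{-\lambda^j}$, then reading travel times and exit data from singularities) is in the right spirit, but the central ``separation-of-scales'' lemma on which your argument rests has two genuine gaps. First, you cannot arrange the disjointness/transversality of the singular supports of the different point-source waves ``after perturbing the $x_j$'': the theorem requires one fixed $h_0$ (hence a fixed dense sequence $(x_j)$) that works for \emph{every} admissible pair $g,g'$, so any such perturbation would have to be independent of the unknown metric; moreover, precisely because the $x_j$ are dense in $\p M$, the singular supports of the other summands accumulate at every point of $(T_0,T)\times\p M$ you probe, so ``the other sources are disjoint at the relevant points'' fails as stated, and a tail bound of size $o(a_N)$ in a fixed distribution norm does not preclude the tail from sitting microlocally on top of the singularity you are examining. (The paper explicitly notes that no algorithm is known for filtering an individual $Lf_{x_j}$ out of the sum; its proof never attempts this.) Second, even granting that you can locate an order-$a_j$ singularity of the data at $(d_g(x_j,y),y)$, you must decide \emph{which} source produced it in order to attach that travel time to the pair $(x_j,y)$. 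The observed magnitude is $a_j\beta(x_j,y)$ with $\beta>0$ an unknown smooth factor depending on the metric inside $M$, so ``the singularity sits at a definite amplitude order determined by $a_j$'' does not identify $j$; disentangling $a_j$ from $\beta$ is exactly the nontrivial step, and your proposal uses the doubly exponential decay only for ordering of scales, never for this.

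For comparison, the paper's mechanism avoids both problems: it propagates the measured data back through the known exterior $(\Omega,g|_\Omega)$ and pairs the source with Gaussian beams sent backwards along geodesics (Lemmas \ref{lem:integration_by_parts} and \ref{lem:testing_with_gaussian_beam}), obtaining a function $S(y,\eta,t)$ that is nonzero exactly when $\gamma_{y,\eta}(t)$ is a source point; the separation between sources is achieved by the spatial concentration of the beam in the limit $\epsilon\to0$ (and Gaussian beams remain valid through caustics, which is what makes the nonsimple case tractable), not by amplitude ordering in the data. The identification of the source index is then done by the modulo-$A$ trick of Lemma \ref{lem:hits}: along nearby beams hitting sources $x_{k_\ell}\to x_j$ one has $|S|=a_{k_\ell}\beta_\ell$ with $\log_\lambda a_{k_\ell}$ in the sparse set $A=\{-\lambda^k\}$ and $\log_\lambda\beta_\ell$ bounded, so $\beta$ can be factored out asymptotically and $a_{k_j}$, hence the source point, recovered (Theorem \ref{thm:determination_of_intersection_point}); the exit direction is obtained from the tangential gradient of the travel-time function (Lemma \ref{lem:variation_for_intersection_direction}) rather than from wavefront directions of the data. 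To repair your argument you would need, at minimum, a metric-independent substitute for the genericity step and an explicit procedure (such as the paper's limiting/modulo-$A$ argument) for separating $a_j$ from the unknown amplitude factor.
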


Recall that a compact Riemannian manifold $(\bar M, g)$ 
with boundary is simple
if it is simply connected, any geodesic has no conjugate points and
$\p M$ is strictly convex with respect to the metric $g$.
Any two points of a simple manifold can be joined by
a unique geodesic.


The key idea of proof of Theorem \ref{main_thm} is to use source
$h_0(t,x)=\sum_{j=1}^\infty a_j f_{x_j}$. 
The point source $a_{j_0} f_{x_{j_0}}$ produces a singularity, 
which is observed at a point $y\in \R^n\setminus M$ at time $t_0=d(x_{j_0},y)$ 
with a magnitude $a_{j_0}\beta(x_{j_0},y)$, where $\beta$ is an unknown nonvanishing smooth function. 
Appropriate choice of the weights $a_j$ allows us find the index $j_0$ by 
looking nearby singularities. 
Indeed, when $x_{j_k} \to x_{j_0}$ and $j_k \to \infty$, 
we see that the asymptotic behavior of the magnitude $a_{j_k}\beta(x_{j_k},y)$ as $k \to \infty$
will be that of the weights $a_{j_k}$. 
Thus it is possible to factor out $a_{j_k}$ in the magnitude and determine $a_{j_0}$.
This argument is presented in Section \ref{sec:scattering_relation} and 
gives us the distances $d(x_j,y)$ in $(\R^{n}, g)$ for
arbitrary point $y\in \R^n\setminus M$ and a source point $x_j$.

Theorem \ref{main_thm} and boundary rigidity results for simple manifolds imply the following:

\begin{corollary}
\label{cor: main_thm} 
Let $M \subset \R^n$ and let $g_{jk},g'_{jk}  \in C^\infty (\R^n)$ be two Riemannian metric tensors
satisfying assumptions of Theorem \ref{main_thm}. 
Assume that  $(\overline M,g)$ and $(\overline M,g')$ are simple Riemannian manifolds.
Then 
\medskip

(i) If $n=2$ and 
\begin{equation}\label{coincide}
L_g h_0 = L_{g'} h_0 \quad \hbox{on }(T_0, T) \times \p M
\end{equation}
then there is a diffeomorphism $\Phi:M\to M$ such that $\Phi|_{\p M}=Id$
and $g=\Phi_*g'$.

(ii) For $n\geq 3$ there is $\epsilon=\e_{n,M}>0$ such that if 
$\|g_{jk}-\delta_{jk}\|_{C^2(M)} <\epsilon_n$, $\|g'_{jk}-\delta_{jk}\|_{C^2(M)}<\epsilon_n$
and (\ref{coincide}) holds,
then there is a diffeomorphism $\Phi:M\to M$ such that $\Phi|_{\p M}=Id$
and $g=\Phi_*g'$.

(iii) If $g_{jk}(x)=a(x)\delta_{jk}$ and 
$g'_{jk}(x)=a'(x)\delta_{jk}$, that is, the metric tensors are conformally Euclidian,
and (\ref{coincide}) holds, then $g_{jk}(x)=g_{jk}'(x)$ for $x\in M$.
\end{corollary}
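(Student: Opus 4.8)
The plan is to derive Corollary \ref{cor: main_thm} from Theorem \ref{main_thm} by invoking the appropriate boundary rigidity results for simple Riemannian manifolds. Since \eqref{coincide} holds, Theorem \ref{main_thm} immediately gives that $d_{\possiblebar M,g}(x,y) = d_{\possiblebar M,g'}(x,y)$ for all $x,y \in \p M$; that is, the two simple metrics have the same boundary distance function. Each of the three cases is then a direct citation of a known deep theorem about when the boundary distance function determines a simple metric up to a boundary-fixing diffeomorphism.

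For case (i), I would cite the two-dimensional boundary rigidity theorem of Pestov and Uhlmann, which states that a simple Riemannian metric on a compact surface with boundary is determined by its boundary distance function up to an isometry fixing the boundary pointwise; this yields the diffeomorphism $\Phi$ with $\Phi|_{\p M} = \mathrm{Id}$ and $g = \Phi_* g'$. For case (ii), I would cite the local/generic boundary rigidity result for $n \geq 3$ (e.g.\ Lassas--Sharafutdinov--Uhlmann or the Stefanov--Uhlmann rigidity near Euclidean metrics), which asserts that if two simple metrics sufficiently close to the Euclidean metric in $C^2$ share the same boundary distance function, they agree up to a boundary-fixing diffeomorphism; the constant $\e_{n,M}$ is exactly the one furnished by that theorem. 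For case (iii), conformal-to-Euclidean simple metrics are a long-settled case of boundary rigidity: by a result going back to Mukhometov (and the conformal boundary rigidity literature), the boundary distance function of a metric $a(x)\delta_{jk}$ with $(\bar M, a\delta_{jk})$ simple determines $a$ uniquely in $M$; since the only diffeomorphism preserving the conformal class and fixing $\p M$ forces $a = a'$ outright, we get $g_{jk}(x) = g'_{jk}(x)$ on $M$ with no diffeomorphism needed.

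There is essentially no obstacle internal to this paper: the entire content of the corollary is packaged in Theorem \ref{main_thm} plus the cited rigidity theorems, so the ``proof'' is a short paragraph of bookkeeping. The only mild subtlety worth a sentence is checking hypotheses of the quoted results: one must confirm that the manifolds in question are genuinely simple (assumed in the statement), that in case (ii) the smallness constant $\e_{n,M}$ may legitimately be taken to depend only on $n$ and $M$ (it is the minimum of the constant from the rigidity theorem and whatever is needed to guarantee simplicity persists under $C^2$-perturbation), and that in case (iii) the conformal factors $a, a'$ are smooth and bounded below, which follows from \eqref{assumption_1}. I expect the write-up to be three or four lines citing \cite{lens1,lens2,lens3,lens4} or the specific boundary rigidity papers, with no computation required.
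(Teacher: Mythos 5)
Your proposal is correct and follows exactly the paper's argument: Theorem \ref{main_thm} gives equality of the boundary distance functions for the simple metrics, and each case is then a direct citation of a boundary rigidity theorem, with (i) from Pestov--Uhlmann \cite{PU} and (iii) from Mukhometov and Mukhometov--Romanov \cite{Mu1,Mu2,MR}, just as you say. The only cosmetic difference is in case (ii), where the paper cites the Burago--Ivanov rigidity result for metrics $C^2$-close to the flat one \cite{Burago} rather than the Stefanov--Uhlmann or Lassas--Sharafutdinov--Uhlmann results you mention, but the logical role of the citation is identical.
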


Indeed, by Theorem \ref{main_thm}, the case (i) follows from \cite{PU}, 
(ii) follows from \cite{Burago}, and (iii) from \cite{Mu1,Mu2,MR}.

If Uhlmann's conjecture 
\cite{Uhlmann}, that the scattering relation determines the isometry type
of non-trapping compact manifolds with non-empty boundary, can be proven,
then Corollary \ref{cor: main_thm} holds for more general class of manifolds.


The problem of determining the metric $g$ (possibly up to a diffeomorphism)
 with given the measurement $L h_0$ with only one function $h_0(t,x)$
is a formally determined inverse problem. Indeed, the formally computed
``dimension of the data'', that is the dimension of  $(T_0,T) \times \p M$, is $n$ and
coincides with dimension of the set $M$ on which the unknown functions $g_{jk}(x)$ are defined. 

The formally determined inverse problems have been studied in many cases. 
For instance, two dimensional Calderon's inverse problem \cite{AP,ALP,IUY,N1,S}
is formally determined.
The same is true for the related inverse problem for the Schr\"odinger equation 
in two dimensions \cite{Bu}.
The corresponding inverse problems in dimension $n \geq 3$, see \cite{C,KSU,LTU,N2,SU} and references in \cite{GKLUrev},
are over-determined, that is, the dimension of the data is larger than the dimension of the unknown object.
Similar classification holds for the elliptic inverse problems on Riemannian manifolds \cite{GT,GT2,LTU,LU,LeU}. 
Moreover, the inverse travel time problems, i.e. boundary rigidity problem,
see \cite{KLU, Mi, Mu1, Mu2, MR, Ro, lensJAMS}, is formally determined
in dimension $n=2$ and overdetermined for $n\geq 3$.

Inverse problems in time domain related to the Laplace-Beltrami operator
$\Delta_g$, namely the inverse boundary value problem 
for the wave, heat, and the dynamical Schr\"odinger equations with 
Dirichlet-to-Neumann as data, see \cite{AKKLT, BeK, KK, KKL}, are overdetermined in dimensions
$n\geq 2$. 
However, these problems are equivalent to the inverse
boundary spectral problem, see \cite{KKLM},
and assuming that the eigenvalues are simple, 
Dirichlet-to-Neumann map at a generic Dirichlet boundary value determines
the boundary spectral data \cite{La,La2,Ram}. 
Thus under generic conditions on the spectrum and on the boundary value 
(that is, under conditions that the these data belong in some open and dense set)
it is possible to solve a formally determined inverse problem in time domain. 

We point out that in this paper we do not impose any generic conditions on the geometry
and we give an explicit constuction of the boundary source. 
The  boundary source considered in this paper is based on the idea
of imitating a realization of 
white noise, and due to the many useful properties of white noise process, we hope 
that the constructed source  may be useful in the study of other 
inverse problems requiring generic assumptions on the source.

Another formally determined hyperbolic inverse problem, 
namely measuring Neumann data 
when the initial data $(u|_{t=0},\p_t u|_{t=0})$ is non-zero and
satisfies subharmonicity or positivity conditions, has been studied using Carleman estimates 
\cite{Bellassoued,Imanuvilov,Klibanov}.
The present paper is closely related
to these studies, but we emphasize that we assume that the initial data for $u$ vanishes.

\section{Pseudorandom noise as a source}
\label{sec:pseudo_white_noise}

In this section we define a special source $h_0(t, x)$ 
which we call the {\it pseudorandom noise}. 
The specific  assumptions on the amplitudes are explained in Section 
\ref{sec:scattering_relation}.
An important feature of the pseudorandom noise is that it is supported only on a single point in time.

\begin{definition}
\label{def:pseudorandom_noise}
Let $x_j\in \p M$, $j = 1, 2, \dots$, 
be a dense sequence of disjoint points in $\p M$,
and let $a_j \in \R$, $j = 1, 2, \dots$, $\sum_{j=1}^\infty |a_j|<\infty$ be a 
 sequence of disjoint numbers.

We define the pseudorandom noise on 
$(x_j)_{j=1}^\infty \subset \p M$ with coefficients $(a_j)_{j=1}^\infty\subset \R$ as the following generalized function
on $\R \times \R^n$: 
\begin{equation*}
h_0(t,x) := \sum_{j=1}^\infty a_j \delta(t) \delta_{x_j}(x), \quad (x,t) \in \R^{n+1},
\end{equation*}
where $\delta(t)$ and $\delta_{x_j}(x)$ are Dirac delta distributions on $\R$ and $\R^n$, respectively.
\end{definition}

It is rather straightforward to show that $h_0$ is well-defined.
First, it is well known that $\delta(t) \in H^{-1} (\R)$ and $\delta_{x_j}(x) \in C(\R^n)'$.
Moreover, $H^1_{p'} (\R^n) \subset C(\R^n)$ when $1 > n/p'$ due to \cite[Thm.\ 2.8.1]{TRI}.
According to \cite[Thm. 2.6.1]{TRI} the dual space satisfies $(H^1_{p'} (\R^n))' = H^{-1}_p(\R^n)$
with $1/p' = 1 - 1/p$ and hence $C(\R^n)'\subset H^{-1}_p(\R^n)$ for $1< p < \frac{n}{n-1}$.
Since $\sum_{j=1}^\infty |a_j|<\infty$ we have 
\begin{equation*}
	\sum_{j=1}^\infty a_j \delta_{x_j}(x) \in H^{-1}_p (\R^n).
\end{equation*}
This yields that for any $p \in (1, \frac{n}{n-1})$ and $\epsilon > 0$ the pseudorandom noise $h_0$ satisfies
\begin{equation}
	h_0 \in \widetilde H^{-1} (-\epsilon, \epsilon) \otimes \widetilde H^{-1}_p (M).
\end{equation}

The spatial structure of the pseudorandom noise can be motivated by the structure the white noise.
In the 1-dimensional radar imaging models, white noise signals are considered to be optimal sources when imaging a stationary scatterer
\cite{radarprinciples}.
This is due to the fact that different translations of the white noise signal are uncorrelated.
In a similar fashion we have the following property for the pseudorandom noise $h_0$:
for each $x_{j_0}$ and each sequence $(x_{j_k})_{k = 1}^\infty$ 
converging to $x_{j_0}$ and satisfying $x_{j_k} \ne x_{j_0}$ for all $k \in \Z_+$,
it holds that $a_{j_k}\to 0$.
This property will be crucial in what follows. 
 
Moreover, a natural strategy to choose the points $x_j$ is by random sampling.
The term pseudorandom refers to the fact that
the algorithmic generators of random numbers use, in fact, a deterministic function 
to produce a sequences of numbers through
so mixing process, that the user of the algorithm can consider the numbers to be analogous to independent 
samples of a random variable.  
In this manner, the pseudorandom noise can be seen as an imitation of a realization of a noise process. 

Another source of inspiration for us was a rather new measurement paradigm called compressed sensing \cite{CRT,donoho}, where one 
aims for a sparse reconstructions of a linear problem using a small number of noisy measurements. 
We point out that by using the pseudorandom noise one can compress
the measurements $L f_{x_j}$ with point sources $f_{x_j}$
into a single measurement $L h_0$.

\section{Measurement map}
\label{sec:measurement_map}
In this section we prove that the measurement is well-defined when
we have pseudorandom noise as source.

{Next, we consider the 
operator $W : f \mapsto u$ mapping $f$ to the solution of the equation (\ref{eq:wave_eq_1}).
We call such operator the solution operator for the equation (\ref{eq:wave_eq_1}).}
First, we note that by 
\cite[Thm.\ 23.2.2]{hormander:pde}, the 
operator $W : f \mapsto u$ mapping $f$ extends in a unique way to a  continuous linear operator 
\begin{equation}\label{eq: oli lemma 1}
W:L^1((T_0, T); H^s(\R^n)) \to C([T_0, T]; H^{s+1}(\R^n)), \quad s \in \R.
\end{equation}
Moreover, if $f \in C^\infty([T_0, T] \times \R^n)$
and $\supp(f) \subset\subset (T_0, T] \times \R^n$, that is,
$\supp(f)$ is a compact subset of  $(T_0, T] \times \R^n$,
 then 
$Wf \in C^\infty([T_0, T] \times \R^n)$.

We will compose the operator $W$ with the one-sided inverse $\I$ of the derivative
$\p_t$, which is given by
\begin{equation*}
\I u(t) := \int_{T_0}^t u(t') dt', \quad u \in C_c^\infty(T_0, T).
\end{equation*}
One sees easily that this operator has a unique continuous linear extension $\I:\widetilde H^{-1}(T_0, T) \to L^2(T_0, T)$.

Next we prove that the measurement map $L$ has unique continuous 
extension 
\begin{equation}
\label{eq:extending_measurement_map_spaces}
\widetilde H^{-1} (T_0,T) \otimes H^{-1}_p (\R^n) 
\to \D'((T_0,T) \times \p M).
\end{equation}

\begin{proof}[Proof of Lemma \ref{lem:extending_measurement_map}]
For sufficiently large $ z \in \R_+$, operator $ z - \Delta_g$ is 
an isomorphism between spaces $H^{s+2}(\R^n)$ and $H^s(\R^n)$
as well as between spaces $H_p^{s+2}(\R^n)$ and $H_p^s(\R^n)$ for all integers $s$
by \cite{Sh1}.

By the definition of $L$, we have that $L = \Tr \circ W$, where 
$\Tr$ is the trace operator
\begin{equation*}
\Tr (u )= u|_{(T_0, T) \times \p M},
\quad u \in C^\infty((T_0, T) \times \R^n).
\end{equation*}

Let $f \in C_c^\infty( (T_0,T) \times \R^n)$.
Then the solution $u=Wf$ of the wave equation $(\p_t^2-\Delta_g)u=f$ can be written in the form
\begin{align} \label{eq:extension}
Wf &= ( z-\p_t^2)^m ( z - \Delta_g)^{-m} W f
+ \sum_{j=0}^{m-1} ( z-\p_t^2)^j ( z - \Delta_g)^{-1-j} f.
\end{align}
Now $f=\p_t\I f$, where $\I f $ is $C^\infty$-smooth and satisfies $\supp (\I f) \subset\subset (T_0, T] \times \R^n$.
By (\ref{eq: oli lemma 1}), $W \I f$ is $C^\infty$-smooth and 
$\p_t W \I f = W \p_t \I f = Wf$.
Hence
\begin{align} \label{eq:extension2}
Lf 
&= \p_t ( z-\p_t^2)^m \Tr ( z - \Delta_g)^{-m} W \mathcal \I f
+ \sum_{j=0}^{m-1} ( z-\p_t^2)^j \Tr ( z - \Delta_g)^{-1-j} f.
\end{align} 

Let us  next consider terms appearing in (\ref{eq:extension2}).
First we consider extension of  the operator 
\begin{align}
\label{eq:extension_terms_of_sum}
\sum_{k=1}^N \phi_k \otimes \psi_k 
&\mapsto \sum_{k=1}^N ( z-\p_t^2)^j \Tr ( z - \Delta_g)^{-1-j} (\phi_k \otimes \psi_k)
\\\nonumber&= \sum_{k=1}^N (( z-\p_t^2)^j \phi_k) \otimes (\Tr ( z - \Delta_g)^{-1-j} \psi_k)),
\quad j = 0, \dots, m-1,
\end{align}
mapping $C_c^\infty(T_0,T) \otimes C_c^\infty(\R^n)$ to $C^\infty ((T_0,T) \times \p M)$.
By \cite[Thm 4.7.1]{TRI} the maps
\begin{equation*}
H^{-1}_p (\R^n) 
\xrightarrow{( z - \Delta_g)^{-1-j}} H^1_p (\R^n) 
\xrightarrow{\Tr} B_{p,p}^{1-1/p} (\p M)
\end{equation*}
are continuous, where $B_{p,p}^{1-1/p} (\p M)$ is the Besov space on $\p M$.
Thus
the operator (\ref{eq:extension_terms_of_sum}) has a continuous extension
in spaces (\ref{eq:extending_measurement_map_spaces}).
%

Next, consider the extensions of  the operator 
\begin{align}
\label{eq:extension_leading_term}
\sum_{k=1}^N \phi_k \otimes \psi_k \mapsto 
&\sum_{k=1}^N \p_t ( z-\p_t^2)^m \Tr ( z - \Delta_g)^{-m} W \mathcal ( (\I\phi_k) \otimes \psi_k) 
\end{align}
mapping $C_c^\infty(T_0,T) \otimes C_c^\infty(\R^n)$ to $C^\infty ((T_0,T) \times \p M)$.
As $-1 - n/p > -1 - n $ we have by \cite[Thm. 2.8.1]{TRI} 
a continuous embedding 
$
H^{-1}_p (\R^n) \hookrightarrow H^{-1-n/2} (\R^n).
$
Moreover, the operator $\mathcal I:\widetilde H^{-1} (T_0,T) \to L^2 (T_0,T)$ and
the embedding
$
L^2(T_0,T) \otimes H^{-1-n/2} (\R^n) \hookrightarrow L^2 ( (T_0,T); H^{-1-n/2} (\R^n))
$
are continuous. Thus,
by (\ref{eq: oli lemma 1}),
\begin{align*}
W \mathcal I:\widetilde H^{-1} (T_0,T) \otimes H^{-1}_p (\R^n) 
\to C ( [T_0,T];  H^{-n/2} (\R^n))
\end{align*}
is continuous.

Thus, as  $(1 - \Delta_g)^{-m}:C ( [T_0,T];  H^{-n/2} (\R^n)) 
\to C ( [T_0,T];  H^{-n/2 + 2m} (\R^n))$ is continuous and 
 $-n/2 + 2m > 1/2$, we see that the operator 
\begin{equation*}
\Tr (1 - \Delta_g)^{-m} W \mathcal I:\widetilde H^{-1} (T_0,T) \otimes H^{-1}_p (\R^n) 
\to C ( [T_0,T];  L^2(\p M))
\end{equation*}
is continuous.

Combining the above results, we see that the operator (\ref{eq:extension2})
has a continuous extension to
spaces (\ref{eq:extending_measurement_map_spaces}).
As the spaces $C_c^\infty(T_0,T)$ and $C_c^\infty(\R^n)$ are dense 
in $\widetilde H^{-1} (T_0,T)$ and $H^{-1}_p (\R^n)$, respectively,
we see that  the continuous extension of $L$ is unique. 
\end{proof}

\section{Inner product of a solution and a source}


\begin{lemma}
\label{lem:integration_by_parts_smooth}
Let $f \in C_c^\infty((T_0, T) \times M)$, $t_0 \in (T_0, T)$
and let $w \in C^\infty([T_0, t_0] \times \R^n)$ satisfy
\begin{equation*}
(\p_t^2 - \Delta_g) w = 0, \quad \text{ in } (T_0, t_0) \times \R^n.
\end{equation*}
Then 
\begin{align*}
&\int_{T_0}^{t_0} \int_{\R^n} f(t,x) w(t,x) dt dV(x)
\\&\quad\quad= 
\int_{\R^n} \l( (\p_t Wf)(t_0, x) w(t_0, x) - (Wf)(t_0, x) (\p_t w)(t_0, x) \r) dV(x)
\end{align*}
where $dV(x) = |g|^{1/2} dx$ is the Riemannian volume measure of $(\R^n, g)$
and $W:f\mapsto u$ is the solution operator of wave  equation (\ref{eq:wave_eq_1}).
\end{lemma}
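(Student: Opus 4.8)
The plan is to derive the identity as an energy/integration-by-parts formula, first establishing it for smooth data and then reading off the stated statement, which is already at that smooth level. Concretely, set $u = Wf$, so $u \in C^\infty([T_0,T]\times\R^n)$ with $\supp u \subset\subset [T_0,T]\times\R^n$ by finite speed of propagation and the support assumptions on $f$; in particular $u$ and all its derivatives vanish near spatial infinity and at $t=T_0$. The first step is to write
\[
\int_{T_0}^{t_0}\!\!\int_{\R^n} f\,w\,dt\,dV
= \int_{T_0}^{t_0}\!\!\int_{\R^n} \bigl((\p_t^2 - \Delta_g)u\bigr)\,w\,dt\,dV,
\]
and then move the operator $\p_t^2 - \Delta_g$ off $u$ and onto $w$ by integrating by parts.

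The second step is the integration by parts itself, done separately in the time and space variables. For the spatial part, Green's formula for $\Delta_g$ on $\R^n$ gives
\[
\int_{\R^n}\bigl((\Delta_g u)\,w - u\,(\Delta_g w)\bigr)\,dV = 0
\]
for each fixed $t$, since $u(t,\cdot)$ is compactly supported so there are no boundary terms at infinity; one should note that the would-be boundary terms on $\p M$ cancel because both $u$ and $w$ are smooth across $\p M$ (this is why the hypothesis places $w \in C^\infty([T_0,t_0]\times\R^n)$ rather than only on $M$, and why $f$ is supported in $M$ is irrelevant to this cancellation — the key point is global smoothness of both functions on $\R^n$). For the time part, integrating $\int_{T_0}^{t_0}(\p_t^2 u)\,w\,dt$ by parts twice produces the boundary terms at $t = T_0$ and $t = t_0$; the terms at $t=T_0$ vanish because $u|_{t=T_0} = \p_t u|_{t=T_0} = 0$, leaving exactly $(\p_t u)(t_0,x)w(t_0,x) - u(t_0,x)(\p_t w)(t_0,x)$. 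The third step is to combine these: using $(\p_t^2 - \Delta_g)w = 0$ on $(T_0,t_0)\times\R^n$, the interior term $\int\int u\,(\p_t^2 - \Delta_g)w$ drops out, and what remains is precisely the claimed right-hand side with $u = Wf$. Everything is justified by Fubini and the fact that all integrands are smooth and compactly supported in $x$ uniformly in $t \in [T_0,t_0]$.

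The only place that needs care — and the main (mild) obstacle — is justifying that there is genuinely no contribution from $\p M$ or from spatial infinity when applying Green's identity, and that the regularity of $u$ up to $t = t_0$ suffices to evaluate the $t=t_0$ boundary terms. The first is handled by the compact spatial support of $u$ (from finite propagation speed, using assumption (\ref{assumption_1})) together with the global $C^\infty$ smoothness of both $u$ and $w$ on $\R^n$, so $\p M$ is not an actual interface for either function. The second is immediate from $Wf \in C^\infty([T_0,T]\times\R^n)$ and $w \in C^\infty([T_0,t_0]\times\R^n)$. No approximation argument is needed at this stage, since the hypotheses are already smooth; the extension to rougher $f$ and $w$ (which is presumably what this lemma is a stepping stone toward) would be carried out later by density.
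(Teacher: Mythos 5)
Your proposal is correct and follows essentially the same route as the paper: write $f=(\p_t^2-\Delta_g)Wf$, integrate by parts in $t$ and apply Green's identity in $x$ (justified by the compact spatial support of $Wf$ from finite propagation speed and the global smoothness of both functions), with the $t=T_0$ terms killed by the zero initial data and the interior term killed by $(\p_t^2-\Delta_g)w=0$. No substantive difference from the paper's proof.
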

\begin{proof}
By finite speed of propagation of waves, see e.g. \cite[pp.\ 150-156]{Lady}, $\supp (Wf(t))$ is compact in $\R^n$. 
The claim follows by integration by parts
\begin{align*}
&\int_{\R^n} ( (\p_t u)(t_0,x)w(t_0,x)- u(t_0,x)(\p_t w)(t_0,x))dV(x)
\\&\quad -\int_{\R^n} ( (\p_t u)(T_0,x)w(T_0,x)- u(T_0,x)(\p_t w)(T_0,x))dV(x)
\\&=\int_{(T_0, t_0) \times \R^n}(
(\p_t^2-\Delta_g)u(t,x)\, w(t,x)-u(t,x)\, (\p_t^2-\Delta_g)w(t,x))dtdV(x)
\\&=\int_{(T_0, t_0) \times \R^n} f(t,x)w(t,x)dtdV(x).
\end{align*}
\end{proof}

Next, we will prove a generalization of the previous lemma 
for non-smooth sources $f$.
Denote by $B(0,R)=\{ x \in \R^n ; |x| < R\}$ the Euclidean ball. 
The finite speed of propagation for wave equation,
yields that there is $R > 0$ such that 
all $f \in C_c^\infty((T_0, T) \times M)$ 
satisfy $\supp(Wf) \subset \subset (T_0, T] \times B(0,R)$. 
We define 
\begin{equation}
\label{def:exterior_domain}
\Omega := B(0,R) \setminus \bar M.
\end{equation}

Below, we use the fact (see
\cite[Thm.\ 7.2.3/6, Thm.\ 5.6.3/6]{evans}) that
the  operator $W_\Omega : h \mapsto v$ mapping $h$ to the solution of the equation 
\begin{align} \label{eq:wave_eq_exterior_domain}
&(\p_t^2 - \Delta_g)v(t,x) = 0
    \quad {\rm in} \; (T_0, T) \times \Omega,
\\&v|_{(T_0, T) \times \p \Omega} = h, \nonumber
\\&v|_{t=T_0}=0,\quad \p_t v|_{t=T_0}=0. \nonumber
\end{align}
is continuous 
$W_\Omega:C_c^\infty((T_0, T) \times \p \Omega) \to C^\infty([T_0, T] \times \bar \Omega).$

We let $t_0 \in (T_0, T)$ and denote
\begin{equation}
\label{def:surface_Sigma}
\Sigma := \{t_0\} \times \Omega
\end{equation}
We denote the trace on $\Sigma$ by $\Tr_\Sigma$,
that is, we define $(\Tr_\Sigma u)(x) := u(t_0, x)$.
Let $\nu=\nu(z)$ denote the exterior unit normal vector of $\p M$ at $z$.

Moreover, let $U$ be an open subset (or a submanifold) of $\R^n$
and let us denote by $dV$ (or $dS$) the Riemannian volume
measure of $(U, g)$.
We embed the test functions into the spaces of distribution by using
the inner product of the space $L^2(U; dV)$, that is,
we identify $u\in C^\infty_0(U)$ with the distribution
\begin{equation}
\label{eq:embedding_test_functions}
\psi\mapsto \int_U u(x) \psi(x)\,dV(x).
\end{equation}
We will denote
the distribution pairing of $u\in \D'(U)$ and $\psi \in C^\infty_0(U)$
by $(u,\psi)_{\D'(U)}$ and use analogous notations for other distribution
pairings.

\begin{lemma}
\label{lem:extension_of_measurement}
Let $t_0 \in (T_0, T)$ and define $\Sigma$ by (\ref{def:surface_Sigma}).
Then operators $\Tr_\Sigma W_\Omega$ and $\Tr_\Sigma \p_t W_\Omega$ 
have unique continuous extensions 
$\E'((T_0,t_0) \times \p \Omega) \to \D'(\Omega).$

\end{lemma}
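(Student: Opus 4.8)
The plan is to reduce the statement about distributional extensions to a duality pairing against smooth test functions, using the representation formula (\ref{eq:extension}) that was already exploited in the proof of Lemma \ref{lem:extending_measurement_map}. The operator $W_\Omega$ solves a boundary value problem on the exterior domain $\Omega$, and by \cite[Thm.\ 7.2.3/6, Thm.\ 5.6.3/6]{evans} it maps $C_c^\infty((T_0,T) \times \p\Omega)$ continuously into $C^\infty([T_0,T] \times \bar\Omega)$. Since $\overline{\p_+ SM}$ and similar sets play no role here, the only task is to show that $\Tr_\Sigma W_\Omega$ and $\Tr_\Sigma \p_t W_\Omega$ extend continuously to $\E'((T_0,t_0) \times \p\Omega)$. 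Because $\E'$ is the dual of $C^\infty$, it suffices to produce, for each $h \in C_c^\infty((T_0,T) \times \p\Omega)$, a formula for $\Tr_\Sigma W_\Omega h$ tested against an arbitrary $\psi \in C_c^\infty(\Omega)$ in which $h$ enters only through pairings with smooth functions, and then verify continuity of the resulting bilinear form in the $\E'$-topology on $h$ (jointly with the $C^\infty$-topology on $\psi$).

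The key computational step is an integration-by-parts identity for the exterior problem, analogous to Lemma \ref{lem:integration_by_parts_smooth} but with the roles adapted to $W_\Omega$: if $w$ solves the backward wave equation on $(T_0, t_0) \times \bar\Omega$ with suitable zero Cauchy data at $t = t_0$ and zero lateral data on $\p\Omega$, then one can express $\int_\Omega (\Tr_\Sigma W_\Omega h)(x)\, \psi(x)\, dV(x)$ as a boundary integral over $(T_0,t_0) \times \p\Omega$ of $h$ against the normal derivative of $w$ (plus possibly a term from the initial surface $t = T_0$, which vanishes). Concretely: given $\psi \in C_c^\infty(\Omega)$, solve the backward problem $(\p_t^2 - \Delta_g) w = 0$ on $(T_0,t_0) \times \Omega$ with $w|_{t=t_0} = 0$, $\p_t w|_{t=t_0} = \psi$ (or the reverse, depending on which of the two operators we treat) and $w|_{(T_0,t_0)\times\p\Omega} = 0$; then a Green's identity yields $(\Tr_\Sigma W_\Omega h, \psi)_{\D'(\Omega)} = \pm \int_{(T_0,t_0)\times\p\Omega} h\, \p_\nu w \, dS\, dt$. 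The right-hand side depends on $h$ only through a pairing with the smooth function $\p_\nu w$, which is the trace of a smooth solution and hence lies in $C^\infty((T_0,t_0) \times \p\Omega)$; moreover $w$ depends continuously on $\psi$ by the same smooth well-posedness theorems for the exterior problem. This gives the continuous extension to $h \in \E'((T_0,t_0) \times \p\Omega)$, and density of $C_c^\infty$ in $\E'$ forces uniqueness.

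The main obstacle I anticipate is the careful handling of the backward exterior problem and the justification of the Green's identity: one must check that the backward wave equation on the exterior domain $\Omega$ with the prescribed terminal Cauchy data, zero lateral boundary data, and smooth compactly supported $\psi$ indeed has a smooth solution on all of $[T_0,t_0]\times\bar\Omega$, so that $\p_\nu w$ is a genuine smooth function on the lateral boundary and the integration by parts has no hidden boundary contributions. By time reversal $t \mapsto t_0 - t$, this is equivalent to the forward problem already invoked via \cite[Thm.\ 7.2.3/6, Thm.\ 5.6.3/6]{evans}, so it is available, but one should be mindful that $\Omega = B(0,R) \setminus \bar M$ has two boundary components (the inner boundary $\p M$ and the outer sphere $|x| = R$); by finite speed of propagation, signals starting on $\p\Omega$ within time $T$ do not reach back, and one checks that the contribution from the outer component can be arranged to vanish or is harmless. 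Once these well-posedness and support facts are in place, the extension and its uniqueness follow immediately from the duality argument, with no delicate estimates required beyond the continuity of the solution operators already cited.
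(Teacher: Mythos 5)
Your proposal is correct and follows essentially the same route as the paper: the paper also expresses $(\Tr_\Sigma \p_t^j W_\Omega h,\psi)$ via Green's identity as a pairing of $h$ with $\p_\nu w$, where $w$ solves the backward exterior problem with zero lateral data and terminal Cauchy data $(w,\p_t w)|_{t=t_0}$ equal to $(\psi,0)$ or $(0,-\psi)$, and then defines the extensions as the transposes of the continuous maps $\psi\mapsto\p_\nu w$ furnished by the same well-posedness results from Evans, with uniqueness from density of $C_c^\infty$ in $\E'$. The only cosmetic differences are your (unnecessary) appeal to the representation formula (\ref{eq:extension}) and your worry about the outer boundary component, which is harmless since $h$ is simply paired with $\p_\nu w$ over all of $\p\Omega$.
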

\begin{proof} Let $v$ satisfy (\ref{eq:wave_eq_exterior_domain}).
Consider a function 
$w \in C^\infty([T_0, t_0] \times \bar \Omega)$
such that $(\p_t - \Delta_g) w = 0$ in $(T_0, t_0) \times \Omega$ 
and $w|_{(T_0, t_0) \times \p \Omega} = 0$.
Then
\begin{align*}
0 &= \int_{\Omega \times (T_0, t_0)} ((\p_t - \Delta_g) v) w - v ((\p_t - \Delta_g) w) dV(x) dt
\\&= \l[ \int_\Omega ((\p_t v) w - v (\p_t w)) dV(x) \r]_{t = T_0}^{t = t_0}
+ \int_{\p \Omega \times (T_0, t_0)}( (\p_\nu v) w - v (\p_\nu w)) dS(x) dt
\\&= \l. \int_\Omega ((\p_t v) w - v (\p_t w)) dV(x) \r|_{t = t_0}
- \int_{\p \Omega \times (T_0, t_0)} h (\p_\nu w) dS(x) dt,
\end{align*}
where
$\p_\nu$ is the normal derivative on $\p \Omega$,

Denote by $W_1 : f_1 \mapsto w$ 
the solution operator of the equation
\begin{align*}
&(\p_t - \Delta_g) w(t,x) = 0
    \quad {\rm in} \; (T_0, t_0) \times \Omega, 
\\&w|_{(T_0, t_0) \times \p \Omega} = 0, \nonumber
\\&w|_{t=t_0}=f_1,\quad \p_t w|_{t=t_0}=0. \nonumber
\end{align*}
Operator 
$W_1:C_c^\infty(\Omega) \to C^\infty([T_0, t_0] \times \bar \Omega),$
is continuous,
as can be seen using \cite[Thm. 7.2.3/6, Thm. 5.6.3/6]{evans}.
Hence the operator 
\begin{equation*}
\p_\nu W_1:C_c^\infty(\Omega) \to C^\infty([T_0, t_0] \times \p \Omega),\quad
f\mapsto \p_\nu W_1 f|_{\p \Omega}
\end{equation*}
is continuous. Moreover, 
\begin{equation*}
(\Tr_\Sigma \p_t W_\Omega h, f_1)_{L^2(\Omega; dV)}
= (h, \p_\nu W_1 f_1)_{L^2((T_0, t_0) \times \p \Omega; dt \otimes dS)},
\end{equation*}
where $\p_\nu$ is the normal derivative on $\p \Omega$.
We define the extension of $\Tr_\Sigma \p_t W_\Omega$ by identifying
it with the transpose 
$(\p_\nu W_1)^t:\E'((T_0, t_0) \times \p \Omega))\to \D'(\Omega) $ 
of the operator $\p_\nu W_1:C_c^\infty(\Omega) \to C^\infty([T_0, t_0] \times \p \Omega)$.

Similarly, we define the extension of $\Tr_\Sigma W_\Omega$ by the transpose
$(\p_\nu W_2)^t:\E'((T_0, t_0) \times \p \Omega))\to \D'(\Omega) $
of $\p_\nu W_2:C_c^\infty(\Omega) \to C^\infty([T_0, t_0] \times \p \Omega)$, where 
$W_2 : f_2 \mapsto w$ is the solution operator of the equation
\begin{align*}
&(\p_t - \Delta_g) w(t,x) = 0
    \quad {\rm in} \; (T_0, t_0) \times \Omega, 
\\&w|_{(T_0, t_0) \times \p \Omega} = 0, \nonumber
\\&w|_{t=t_0}=0,\quad \p_t w|_{t=t_0}=-f_2. \nonumber
\end{align*}
\end{proof}

Denote by $d_{\Omega}(x,y)$, $x, y \in \overline \Omega$, the distance function of Riemannian manifold 
$(\overline \Omega, g|_{\overline \Omega})$. Next we generalize the result of Lemma \ref{lem:integration_by_parts_smooth}
for a larger class of functions.

\begin{lemma}
\label{lem:integration_by_parts}
Let $t_0 \in (0, T)$ and  $\epsilon > 0$ satisfy
$[-\epsilon, \epsilon] \subset (T_0, t_0)$.
Define $\Sigma$ by (\ref{def:surface_Sigma}).
Let $f \in \widetilde H^{-1} (-\epsilon,\epsilon) \otimes \widetilde H^{-1}_p (M)$
and  $w \in C^\infty([T_0, t_0] \times \R^n)$ satisfy
\begin{equation*}
(\p_t^2 - \Delta_g) w = 0, \quad \text{ in } (T_0, t_0) \times \R^n.
\end{equation*}
Suppose that $w(t_0), \p_t w(t_0) \in C_c^\infty(\Omega)$,
and let $\chi \in C_c^\infty(T_0, t_0)$ satisfy 
$\chi = 1$ in a neighborhood of $[-\epsilon, t_0 - r]$, where
\begin{equation*}
r := d_\Omega\big (\supp(w(t_0)) \cup \supp(\p_t w(t_0)), \p \Omega\big).
\end{equation*}
Then 
\begin{align}
\label{eq:integration_by_parts}
&(
f, w)_{\E'(\R^n \times (T_0, t_0))} 
\\&\quad= (\Tr_\Sigma \p_t W_\Omega \chi L f, w)_{\D'(\Omega)} 
- (\Tr_\Sigma W_\Omega \chi L f, \p_t w)_{\D'(\Omega)},
\nonumber
\end{align}
where we have defined $L f = 0$ on $\p B(0,R)$.
Here we regard $\Omega$ as Riemannian manifold $(\Omega, g|_{\Omega})$.
\end{lemma}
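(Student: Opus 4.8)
The plan is to reduce the claim to the smooth-source identity of Lemma~\ref{lem:integration_by_parts_smooth} by an approximation argument, combined with the definitions of the extended operators from Lemma~\ref{lem:extension_of_measurement}. First I would pick a sequence $f_\ell \in C_c^\infty((-\epsilon,\epsilon) \times M)$ converging to $f$ in $\widetilde H^{-1}(-\epsilon,\epsilon) \otimes \widetilde H^{-1}_p(M)$; such a sequence exists by density of test functions in the projective tensor product. For each $f_\ell$, Lemma~\ref{lem:integration_by_parts_smooth} applies with the given $w$, yielding
\begin{equation*}
(f_\ell, w)_{\D'(\R^n \times (T_0,t_0))}
= \int_{\R^n} \big( (\p_t W f_\ell)(t_0,x)\, w(t_0,x) - (W f_\ell)(t_0,x)\, (\p_t w)(t_0,x) \big)\, dV(x).
\end{equation*}
The left-hand side converges to $(f,w)_{\E'(\R^n \times (T_0,t_0))}$ by continuity of the pairing, since $w$ is smooth on $[T_0,t_0] \times \R^n$. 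So the task is to identify the limit of the right-hand side with the two terms on the right of (\ref{eq:integration_by_parts}).

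The key geometric step is the insertion of the cutoff $\chi$. Because $w(t_0)$ and $\p_t w(t_0)$ are supported at distance $r$ from $\p\Omega$, finite speed of propagation for the \emph{backward} problem shows that the values $(W f_\ell)(t_0,\cdot)$ and $(\p_t W f_\ell)(t_0,\cdot)$ on $\supp(w(t_0)) \cup \supp(\p_t w(t_0))$ depend only on $f_\ell$ restricted to times in $(T_0, t_0 - \text{(something)}]$; more precisely, since the data $w(t_0)$ lives inside $\Omega$, the relevant part of $W f_\ell$ in $\Omega$ is governed by the exterior equation (\ref{eq:wave_eq_exterior_domain}) with boundary data $L f_\ell = W f_\ell|_{(T_0,T) \times \p\Omega}$, and only the portion of that boundary data on $[-\epsilon, t_0 - r]$ influences the surface integral at $t_0$ over the supports in question. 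This is exactly why $\chi$ is chosen to equal $1$ on a neighborhood of $[-\epsilon, t_0 - r]$: replacing $L f_\ell$ by $\chi L f_\ell$ does not change $(W_\Omega \chi L f_\ell)(t_0,\cdot)$ or its time derivative on $\supp(w(t_0)) \cup \supp(\p_t w(t_0))$. Hence the right-hand side equals
\begin{equation*}
(\Tr_\Sigma \p_t W_\Omega \chi L f_\ell, w)_{\D'(\Omega)} - (\Tr_\Sigma W_\Omega \chi L f_\ell, \p_t w)_{\D'(\Omega)}
\end{equation*}
for each $\ell$, where $W_\Omega$ is applied to the smooth boundary data $\chi L f_\ell$ and the result is smooth, so the pairings are ordinary integrals.

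Finally I would pass to the limit in $\ell$ on this exterior-domain side. By Lemma~\ref{lem:extending_measurement_map}, $L f_\ell \to L f$ in $\D'((T_0,T) \times \p M)$, and since the $f_\ell$ have time support in $(-\epsilon,\epsilon)$ with $[-\epsilon,\epsilon] \subset (T_0,t_0)$, after multiplying by $\chi \in C_c^\infty(T_0,t_0)$ we get $\chi L f_\ell \to \chi L f$ in $\E'((T_0,t_0) \times \p\Omega)$ (extending by $0$ on $\p B(0,R)$ as stipulated). Lemma~\ref{lem:extension_of_measurement} gives that $\Tr_\Sigma \p_t W_\Omega$ and $\Tr_\Sigma W_\Omega$ are continuous from $\E'((T_0,t_0) \times \p\Omega)$ into $\D'(\Omega)$, so the two pairings converge to $(\Tr_\Sigma \p_t W_\Omega \chi L f, w)_{\D'(\Omega)}$ and $(\Tr_\Sigma W_\Omega \chi L f, \p_t w)_{\D'(\Omega)}$, completing the proof.

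The main obstacle I anticipate is making the cutoff step rigorous: one must verify carefully, via domain-of-dependence for the exterior initial-boundary value problem, that the difference $W_\Omega(L f_\ell - \chi L f_\ell)$ vanishes identically on a neighborhood of $\supp(w(t_0)) \cup \supp(\p_t w(t_0))$ at time $t_0$, so that the surface integrals are genuinely unchanged and not merely approximately so. Tracking the precise speed of propagation constant and checking that $\chi \equiv 1$ on the interval $[-\epsilon, t_0 - r]$ is exactly the right condition is the delicate point; the two limiting arguments are then routine consequences of the continuity statements already established.
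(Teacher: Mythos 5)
Your proposal is correct and follows essentially the same route as the paper: prove the identity for smooth sources $f_\ell\in C_c^\infty((-\epsilon,\epsilon)\times M)$ by combining Lemma \ref{lem:integration_by_parts_smooth} with the cutoff/uniqueness/finite-speed-of-propagation argument showing that $\Tr_\Sigma\p_t^j W_\Omega\chi L f_\ell=\Tr_\Sigma\p_t^j W f_\ell$ on $\supp(w(t_0))\cup\supp(\p_t w(t_0))$, and then pass to general $f$ by density and continuity of $(\Tr_\Sigma\p_t^j W_\Omega)\chi L$ (which you justify by composing Lemma \ref{lem:extending_measurement_map}, multiplication by $\chi$, and Lemma \ref{lem:extension_of_measurement}, exactly as the paper intends). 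The delicate point you flag, that $\chi\equiv 1$ near $[-\epsilon,t_0-r]$ guarantees $W_\Omega((1-\chi)Lf_\ell)(t_0,\cdot)$ vanishes on the supports in question, is precisely the step the paper handles via uniqueness for the exterior problem on $(T_0,t_0-r)\times\Omega$ followed by finite speed of propagation.
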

\begin{proof}
We suppose first that $f \in C_c^\infty((-\epsilon, \epsilon) \times M)$. 
Recall that $W$ is solution operator of wave  equation (\ref{eq:wave_eq_1}).
Then
$W f(\cdot, t) = 0$ if $t < -\epsilon$, and
\begin{equation*}
L f = \Tr_{\p \Omega} W f = \chi \Tr_{\p \Omega} W f, \quad \text{in $(T_0, t_0 - r) \times \p \Omega$},
\end{equation*}
where $\Tr_{\p \Omega}$ is the trace on $(T_0, T) \times \p \Omega$.
As $\Omega \cap \overline M = \emptyset$, we have that 
$(\p_t^2 - \Delta_g) W f = 0$ in $(T_0, T) \times \Omega$.
By uniqueness of the solution of (\ref{eq:wave_eq_exterior_domain})
\begin{equation*}
W_\Omega \chi \Tr_{\p \Omega} W f = W f, \quad \text{in $(T_0, t_0 - r) \times \Omega$}.
\end{equation*}
By finite speed of propagation 
\begin{align*}
\Tr_\Sigma \p_t^j W_\Omega \chi \Tr_{\p \Omega} W f = \Tr_\Sigma \p_t^j W f,
\quad j = 0, 1,
\end{align*}
on $\{t_0\} \times \supp(w(t_0)) \cup \supp(\p_t w(t_0))$.
By Lemma \ref{lem:integration_by_parts_smooth}, (\ref{eq:integration_by_parts})
holds.

Then the claim follows as the  embeddings 
\begin{equation*}
C_c^\infty(-\epsilon, \epsilon) \hookrightarrow \widetilde H^{-1} (-\epsilon,\epsilon),
\quad C_c^\infty(M) \hookrightarrow \widetilde H^{-1}_p (M)
\end{equation*}
are dense and operators 
$(\Tr_\Sigma \p_t^j W_\Omega) \chi  L
: \widetilde H^{-1} (-\epsilon,\epsilon) \otimes \widetilde H^{-1}_p (M)
\to \D'((T_0, t_0) \times \p \Omega)$, $j = 0, 1$, are
continuous.
\end{proof}

\section{Gaussian beams}

We consider solutions of wave equation 
which are known as Gaussian beams \cite{Babich1,Babich2,Ral}.
These solutions have been constructed
to analyze the propagation of singularities for the wave equation in the presence of caustics.
Here we use
Gaussian beams as an auxiliary technical tool to analyze singularities
in the measurements.  

\begin{definition}
\label{def:formal_gaussian_beam}
Let $\epsilon > 0$, $N \in \N$ 
and let $\gamma$ be a unit speed geodesic on $(\R^n,g)$.
A formal Gaussian beam of order $N$ propagating along
geodesic $\gamma$ is a function $U^N_{\epsilon}$
of form
\begin{equation*}
U^N_{\epsilon}(t,x) = \epsilon^{-n/4}
\exp {\{ -(i\epsilon)^{-1}\theta(t,x) \} }
\sum _{m=0}^N u_m(t,x) (i\epsilon )^m,\quad t\in \R, x\in \R^n
\end{equation*}
satisfying the following properties:
The phase function $\theta$ and the
amplitude functions $u_m$, $m=0, 1, \dots , N$, are complex valued smooth functions. 
The phase function $\theta$
satisfies the conditions
\begin{align*}
\theta (t,\gamma(t)) =0,
\quad \Im \theta(t,x) \geq C_0(t)d(x,\gamma(t))^2
\end{align*}
where $C_0(t)$ is a continuous strictly positive function. 
The amplitude function $u_0$ satisfies
$
u_0(t,\gamma(t)) \ne 0.
$
Finally, for any compact set 
$K \subset\subset \R \times \R^n$ there is a constant $C>0$ such that the inequality
\begin{equation*}
|(\partial _t^2 - \Delta _g)U^N_{\epsilon}(t,x)|\leq C \epsilon ^{N-n/4}
\end{equation*}
is satisfied uniformly for $(t,x) \in K$.
\end{definition}

The construction of a formal Gaussian beam $U^N_{\epsilon}(t, x)$
is considered in detail e.g.\ in \cite[Sect.\ 2.4]{KKL}. 
Next, we recall the construction and pay attention to the properties
of Gaussian beams which we need later.

Let us write the geodesic $\gamma$ in the usual coordinates of $\R^n$ as 
$\gamma(t) = (\gamma^1(t), \dots, \gamma^n(t))$.
We construct the phase function $\theta(t,x)$ at each time $t \in \R$ 
in terms of a finite Taylor  expansion in the $x$ variable centered at $\gamma(t)$,
\begin{align*}
\theta(t,x) 
= \sum_{|\alpha| \le N} \frac{\theta_\alpha(t)}{\alpha!} (x-\gamma(t))^\alpha,
\end{align*}
where $\theta_\alpha$ are smooth functions and $N \in \N$.

Let $e_j=(\delta_{1j}, \dots, \delta_{nj})$ be multi-indexes with the value 1 at the $j$th place.
For clarity, we use the notation $p_j (t)= \theta_{e_j}(t)$ for the first order coefficients 
and the notation $H_{jk}(t) = \theta_\alpha(t)$, $\alpha={e_j + e_k}$, 
for the second order coefficients in the expansion of $\theta$. 

The construction of a formal Gaussian beam consists of the following steps. 
\begin{itemize}
\item[1.] We define  $\theta_0(t) = 0$ and 
$
p_j(t)= \sum_{k=1}^n g_{jk}(\gamma(t)) \dot \gamma^k(t),
$
that is, the first order coefficients $p_j(t)$ are
the covariant representation of the velocity vector $\dot \gamma$.

\item[2.] The symmetric matrix $H(t)=[H_{jk}(t)]_{j,k=1}^n$ of the second order coefficients 
are obtained by solving a Riccati equation, or an equivalent system
of ordinary differential equation. We write
$H (t) = Z(t) Y(t)^{-1},$
where the pair of complex $n\times n$ matrices $(Z(t), Y(t))$ is the solution of the system of ordinary differential
equations,
\begin{align*}
&\frac d{dt} Y(t) = B(t)^*  Y(t) + C(t) Z(t), \quad \left. Y \right|
_{t=0} = Y^0,\label {2.140}\\ \nonumber
&\frac d{dt} Z(t) = -D(t)Y(t) - B(t)Z(t), \quad
\left. Z \right| _{t=0} =Z^0.
\end{align*}
Here we choose the initial values to be $Z^0=iI$ and $Y^0=I$, where $I$ is the identity matrix
and $i$ is the imaginary unit.
The matrices $B(t)$, $C(t)$, and $D(t)$ in $\R^{n \times n}$ 
have components given by the second derivatives of the Hamiltonian 
$h(x,p)=(\sum_{j,k=1}^n g_{jk}(x)p^jp^k)^{1/2}$ 
evaluated in the point $(x,p) = (\gamma(t), p(t))$
\begin{equation*}
B^j_l = \frac {\partial ^2h}{\partial x^l \partial p_j}; \quad
C^{jl} = \frac {\partial ^2h}{\partial p_j \partial p_l}; \quad
D_{jl} = \frac {\partial ^2h}{\partial x^j \partial x^l}.
\end{equation*}
The fact that the complex matrix $Y(t)$ is invertible for all $t\in \R$
is crucial for the construction, and is discussed in detail in \cite[Section 2.4]{KKL}.

\item[3.] The coefficients $\theta_\alpha(t)$ of order $|\alpha| = m \ge 3$ are
solved inductively, with respect to $m$. The coefficients  $ \theta_\alpha(t)$
are constructed using the coefficients $\tilde \theta_\alpha(t)$ defined so that
\begin{equation*}
\sum_{|\alpha|=m}\tilde \theta_\alpha(t)\tilde y^\alpha
=\sum_{|\alpha|=m}\theta_\alpha(t) (x-\gamma(t))^\alpha, 
\end{equation*}
for all $\tilde y=Y^{-1}(t)(x-\gamma(t))$, $y\in \C^n$.
We obtain the coefficients $\tilde \theta_\alpha(t)$ by solving 
successive linear systems of ordinary differential equations 
\begin{equation*}
\frac{d}{dt} \tilde\theta_\alpha(t)=K_\alpha(t),\quad \tilde\theta_\alpha(0)=0
\end{equation*}
where $K_\alpha(t),$ depend on $ \theta_\beta(t)$ with $|\beta|\leq m-1$, the matrix $H(t)$, vector $p(t),  $
and the metric $g_{jk}$ and its derivatives at $\gamma(t)$.

 \item[4.] When the phase function $\theta(t,x)$ is constructed, the
amplitude functions $u_n(t,x)$ are solved using 
 the transport equations, or equivalently, the following ordinary differential equations. Let 
\begin{equation*}
u_m (t,x)=\sum _{|\alpha|\leq N}\tilde u_{m,\alpha}(t)\tilde y^\alpha,\quad \tilde y=Y^{-1}(t)(x-\gamma(t))
\end{equation*}
where the coefficients $\tilde u_{m,\alpha}(t)$ are obtained by solving the successive  equations 
\begin{equation*}
\frac d{dt} \tilde u_{m,\alpha}(t) + r(t) \tilde u_{m,\alpha}(t) =
{\cal F}_{m,\alpha}(t),\quad \tilde u_{m,\alpha}(0)=\delta_{m,0}\delta_{|\alpha|,0},
\end{equation*}
where $r(t)$ and
${\cal F}_{m,\alpha}(t)$ depend on $ \tilde u_{m',\beta}$ with $|\beta |\leq |\alpha|+2$ and $m'\leq m-1$, the function $\theta(t,x)$,
the metric $g_{jk}$ and their derivatives at $(t,x)$, $x=\gamma(t)$.
 \end{itemize}

By the above construction, we have the following remark.

\begin{remark}
\label{rem:gaussian_beam_locality}
The phase function $\theta(t,x)$ and the amplitude functions $u_m(t,x)$
at time $t=0$ have the form
\begin{align*}
&\theta(0,x) = \sum_{j,k=1}^n g_{jk}(y) \eta^k (x^j-y^j) + i|x-y|^2,
\end{align*}
where $(y,\eta) = (\gamma(0), \dot \gamma(0))$ 
is the initial data of the geodesic $\gamma$,
$u_0(0,x) = 1,$ and $u_m(0,x) = 0$ for $m > 0$.
Hence $U^N_{\epsilon}(0,x)$ is dependent on the metric $g_{jk}$
only via $g_{jk}(y)$.
Moreover, $\p_t U^N_{\epsilon}(0,x)$, although of more complex form,
is dependent on the metric $g_{jk}$ only via $\p^\alpha g_{jk}(y)$ 
for a certain finite collection of multi-indices $\alpha \in \N^n$.
\end{remark}

If the coefficients of an ordinary differential equation
depend smoothly on some parameter so does the solution \cite{Amann}, and thus
we see using an induction that the phase function $\theta$ and the amplitude functions 
$u_m$ depend smoothly on the initial data $(y, \eta) = (\gamma(0), \dot \gamma(0))$ 
of the geodesic $\gamma$.
In particular, the amplitude function $u_0(t, x; y, \eta)$ satisfies
\begin{equation}
\label{rem:gaussian_beam_continuity_wrt_geodesic}
u_0\in C^\infty(\R \times \R^n \times S \R^n).
\end{equation}

To this far we have considered a formal Gaussian beam. By 
using continuous  dependency of the solution of the wave equation on the
source term, on obtains the following results, see e.g. \cite{KKL}:

Let $\gamma$ be a unit speed geodesic, $N \in \N$, $\epsilon > 0$
and let $U^N_{\epsilon}$ be a formal Gaussian beam of order 
$N$ propagating along geodesic $\gamma$.
Let $\chi \in C^\infty_0(\R^n)$
be a function which is identically one in a neighborhood
of $\gamma(0)$ and let $t_0 > 0$ and let $R$ be the radius in the equation (\ref{def:exterior_domain}).
Then for $j \in \N$ and $\alpha \in \N^n$ 
satisfying $j+|\alpha| < N-n/4$
there is $C>0$ such that the solution 
$w_{\epsilon}$ of the wave equation
\begin{align}
\label{eq:gaussian_beam_solution}
&(\partial_t^2 - \Delta_g) w_\epsilon(t,x) = 0,
\quad (t,x) \in (T_0, t_0) \times \R^n,
\\&w_\epsilon(t_0,x) = \chi(x) U^N_\epsilon(0,x),
\nonumber \quad \p_t w_\epsilon(t_0,x) = - \chi(x) \p_t U^N_\epsilon(0,x).
\end{align}
satisfies 
\begin{equation}
\label{eq:gaussian_beam_is_close_to_formal}
\sup_{x \in B(0,R), t \in (T_0, t_0)} 
|\partial_t^j \partial_x^\alpha(w_\epsilon(t_0 - t, x) 
- U_\epsilon^N(t, x))| 
\leq C \epsilon^{N-(j+|\alpha|)-n/4}.
\end{equation}
We call $w_\epsilon$ a Gaussian beam of order $N$ 
propagating along geodesic $\gamma$ 
backwards on time interval $(T_0, t_0)$.

\section{Determination of the travel times}
\label{sec:travel_times}


\begin{lemma}
\label{lem:testing_with_gaussian_beam}
Let $w_\epsilon$ be a Gaussian beam of order $N \ge 1 + n/4$ 
propagating along geodesic $\gamma$ 
backwards on time interval $(T_0, t_0)$,
that is, let $w_\epsilon$ be the solution of (\ref{eq:gaussian_beam_solution}).
Let $h_0$ be the pseudorandom noise 
\begin{equation}\label{A formula}
h_0(t,x) = \sum_{j=1}^\infty a_j \delta(t) \delta_{x_j}(x).
\end{equation}
If $\gamma(t_0) \ne x_j$ for all $j = 1, 2, \dots$
then
\begin{equation*}
\lim_{\epsilon \to 0} 
\epsilon^{n/4} (
h_0, w_\epsilon)_{\E'(\R^n \times (T_0, t_0))} = 0.
\end{equation*}
Moreover, if $\gamma(t_0) = x_j$ then
\begin{equation*}
\lim_{\epsilon \to 0} 
\epsilon^{n/4} (
h_0, w_\epsilon)_{\E'(\R^n \times (T_0, t_0))} =
a_j u_0(t_0, x_j) 
|g|^{1/2}(x_j), 
\end{equation*}
where $u_0(t, x)$ 
is the first amplitude function of a formal Gaussian beam 
propagating along geodesic $\gamma$.
\end{lemma}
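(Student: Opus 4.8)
The plan is to compute the pairing $(h_0, w_\epsilon)_{\E'(\R^n \times (T_0, t_0))}$ termwise. Since $h_0 = \sum_j a_j \delta(t)\delta_{x_j}(x)$ with $\sum_j |a_j| < \infty$, and each $w_\epsilon$ is smooth on $[T_0, t_0] \times \R^n$, the pairing of $h_0$ with $w_\epsilon$ reduces formally to $\sum_j a_j w_\epsilon(0, x_j)$ --- one must first justify that this sum converges and that the interchange of summation and pairing is legitimate, which follows because $w_\epsilon(0, \cdot) \in C^\infty$ is bounded on the compact set $\bar M$ containing all the $x_j$, so $|\sum_j a_j w_\epsilon(0,x_j)| \le \|w_\epsilon(0,\cdot)\|_{L^\infty(\bar M)} \sum_j |a_j| < \infty$, and $h_0 \in \widetilde H^{-1}(-\epsilon,\epsilon) \otimes \widetilde H^{-1}_p(M)$ pairs continuously against smooth functions (cf.\ the density arguments used in Lemmas~\ref{lem:extending_measurement_map} and \ref{lem:integration_by_parts}). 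So the whole problem is reduced to understanding the asymptotics of $w_\epsilon(0, x_j)$ as $\epsilon \to 0$.

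Next I would invoke the Gaussian beam approximation (\ref{eq:gaussian_beam_is_close_to_formal}) with $j = |\alpha| = 0$: since $N \ge 1 + n/4$, we have $w_\epsilon(0, x) = U^N_\epsilon(t_0, x) + O(\epsilon^{N - n/4})$ uniformly on $B(0,R)$, and the error is $o(\epsilon^{-n/4})$ because $N - n/4 > 0 > -n/4$... wait, more carefully: we need $\epsilon^{n/4} w_\epsilon(0,x_j)$ to have a limit, so we need $\epsilon^{n/4}\cdot O(\epsilon^{N-n/4}) = O(\epsilon^N) \to 0$, which holds since $N \ge 1$. Then I examine the formal beam $U^N_\epsilon(t_0, x) = \epsilon^{-n/4} \exp\{-(i\epsilon)^{-1}\theta(t_0,x)\} \sum_{m=0}^N u_m(t_0,x)(i\epsilon)^m$. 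Multiplying by $\epsilon^{n/4}$ kills the prefactor, and all terms with $m \ge 1$ carry a factor $\epsilon^m \to 0$, so only the $m=0$ term survives: $\epsilon^{n/4} U^N_\epsilon(t_0, x_j) = \exp\{-(i\epsilon)^{-1}\theta(t_0, x_j)\} u_0(t_0, x_j) + O(\epsilon)$. The key point is the behavior of the exponential. By the Gaussian beam phase property, $\Im\theta(t_0, x) \ge C_0(t_0) d(x, \gamma(t_0))^2$. If $\gamma(t_0) \ne x_j$ then $d(x_j, \gamma(t_0)) > 0$, so $\Im\theta(t_0, x_j) \ge c > 0$, hence $|\exp\{-(i\epsilon)^{-1}\theta(t_0,x_j)\}| = \exp\{-\epsilon^{-1}\Im\theta(t_0,x_j)\} \le e^{-c/\epsilon} \to 0$. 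If instead $x_j = \gamma(t_0)$ for the (unique, since the $x_j$ are disjoint) index $j$, then $\theta(t_0, \gamma(t_0)) = 0$ and the exponential is $1$, giving the limit $a_j u_0(t_0, x_j)$ --- this accounts for a single term in the sum.

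Finally I would assemble the pieces to handle the infinite sum uniformly. In the case $\gamma(t_0) \ne x_j$ for all $j$: write $\epsilon^{n/4}(h_0, w_\epsilon) = \sum_j a_j \epsilon^{n/4} w_\epsilon(0, x_j)$, split off finitely many terms with $x_j$ close to $\gamma(t_0)$ and bound the tail; on the tail $\sum_{j > J}|a_j|$ is small uniformly in $\epsilon$ (using the crude bound $|\epsilon^{n/4} w_\epsilon(0,x_j)| \le C$ valid uniformly --- again from (\ref{eq:gaussian_beam_is_close_to_formal}) and the boundedness of $u_0$ and the exponential modulus by $1$), while for each of the finitely many kept terms $\epsilon^{n/4} w_\epsilon(0, x_j) \to 0$ by the exponential decay above; so the limit is $0$. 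In the case $x_{j_0} = \gamma(t_0)$: the same splitting isolates the single term $a_{j_0}\epsilon^{n/4}w_\epsilon(0, x_{j_0}) \to a_{j_0} u_0(t_0, x_{j_0})$, while all other terms (both the finitely many near $\gamma(t_0)$, none of which actually equal $\gamma(t_0)$, and the small tail) contribute $0$ in the limit; multiplying by the embedding factor $|g|^{1/2}(x_j)$ that appears because the distribution $\delta_{x_j}$ is paired via the Riemannian volume measure $dV = |g|^{1/2}dx$ (as in (\ref{eq:embedding_test_functions}) and Lemma~\ref{lem:integration_by_parts_smooth}) gives the stated value $a_j u_0(t_0, x_j)|g|^{1/2}(x_j)$. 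The main obstacle is the bookkeeping for the infinite sum: making precise that the tail estimate is uniform in $\epsilon$ and that the pairing genuinely equals $\sum_j a_j w_\epsilon(0,x_j)$ in the distributional sense, which requires carefully matching the $\E'$-pairing of $h_0$ against the smooth function $w_\epsilon$ with the naive termwise evaluation --- everything else is a direct reading-off from the Gaussian beam construction.
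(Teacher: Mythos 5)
Your proposal is correct and follows essentially the same route as the paper: reduce the pairing (with the $|g|^{1/2}$ factor from the $dV$-embedding) to $\sum_j a_j w_\epsilon(0,x_j)|g|^{1/2}(x_j)$, replace $w_\epsilon(0,\cdot)$ by the formal beam $U^N_\epsilon(t_0,\cdot)$ using (\ref{eq:gaussian_beam_is_close_to_formal}) with an $O(\epsilon^N)$ error after multiplying by $\epsilon^{n/4}$, keep only the $m=0$ amplitude, and exploit $\Im\theta(t_0,x_j)\ge C_0(t_0)d(x_j,\gamma(t_0))^2$ to kill all terms with $x_j\ne\gamma(t_0)$, handling the infinite sum by splitting off finitely many terms and using $\sum_j|a_j|<\infty$ for the tail uniformly in $\epsilon$. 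This is exactly the paper's argument (your explicit justification of the termwise pairing is a detail the paper leaves implicit), so nothing further is needed.
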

We remind the reader that the test functions are embedded in $\E'(\R^n \times (T_0, T))$
using (\ref{eq:embedding_test_functions}).
\begin{proof}
By equation (\ref{eq:gaussian_beam_is_close_to_formal})
we have that
\begin{align*}
&\epsilon^{n/4} (
h_0, w_\epsilon)_{\E'(\R^n \times (T_0, t_0))}  
\\\quad&= \epsilon^{n/4} \sum_{j=1}^\infty a_j U_\epsilon^N(t_0, x_j) 
|g|^{1/2}(x_j) + O(\epsilon)
\\\quad&= \sum_{j=1}^\infty a_j u_0(t_0, x_j) 
\exp {\{ -(i\epsilon)^{-1}\theta(t_0,x_j) \} }
|g|^{1/2}(x_j) + O(\epsilon).
\end{align*}
As $\Im \theta(t_0, x_j) \ge C_0(t_0) d(x_j, \gamma(t_0))$ we have that
\begin{align*}
|\exp {\{ -(i\epsilon)^{-1}\theta(t_0,x_j) \} }| = O(\epsilon),\quad\hbox{if }
\quad \gamma(t_0) \ne x_j.
\end{align*}

Suppose that $\gamma(t_0) = x_j$. Then 
$\exp {\{ -(i\epsilon)^{-1}\theta(t_0,x_j) \} } = 1$
and there is a constant $C > 0$ depending on $\gamma$ and $t_0$ such that
\begin{align*}
&|\epsilon^{n/4} (
h_0, w_\epsilon)_{\E'(\R^n \times (T_0, t_0))}
- a_j u_0(t_0, x_j) 
|g|^{1/2}(x_j)|
\\&\le 
C \sum_{k=1}^{j-1} |a_k| |\exp {\{ -(i\epsilon)^{-1}\theta(t_0,x_k) \} }|
+ C\sum_{k=j+1}^{l} |a_k| |\exp {\{ -(i\epsilon)^{-1}\theta(t_0,x_k) \} }|
\\&\quad + C\sum_{l+1}^\infty |a_l| + O(\epsilon).
\end{align*}
We may first choose large $l \in \N$ and then small $\epsilon > 0$
so that the above three sums are arbitrary small.
The case, $\gamma(t_0) \ne x_j$ for all $j = 1, 2, \dots$, is
similar.
\end{proof}


Next we define an auxiliary function 
$S(y_0, \eta_0, t_0)$ which is non-zero if and only if  there is $j\in \Z_+$ such that
$\gamma_{y_0, \eta_0}(t_0)=x_j$. 

\begin{definition}
\label{def:hit_functions}
Let $(y_0,\eta_0)\in T\R^n$ be such that $y_0\in \Omega^{int}$ and $\|\eta_0\|_g=1$. 
We denote by $\gamma(t; y_0, \eta_0)=\gamma_{y_0,\eta_0}(t)$  the geodesic on $(\R^n,g)$ with
$\gamma(0)=y_0$, $\dot \gamma(0)=\eta_0$.
Moreover, let $w_\epsilon$ be a Gaussian beam of order $N \ge 1 + n/4$ 
propagating along $\gamma(t; y, \eta)$
backwards on time interval $(T_0, t_0)$.
We define 
\begin{align*}
S(y_0, \eta_0, t_0) &:= \lim_{\epsilon \to 0} 
\epsilon^{n/4} (
h_0, w_\epsilon)_{\E'(\R^n \times (T_0, t_0))},
\end{align*}
\end{definition}

\begin{lemma}
\label{lem:determination_of_S}
Let $(y_0, \eta) \in S \Omega$ and $t_0 \in (0,T)$.
Then $L h_0$ for pseudorandom noise $h_0$ 
and $(\Omega, g|_\Omega)$, given as a Riemannian manifold 
determine $S(y_0, \eta_0, t_0)$.
\end{lemma}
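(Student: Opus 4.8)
The plan is to express $S(y_0,\eta_0,t_0)$ in terms of quantities that are manifestly determined by the data $Lh_0$ and by the exterior manifold $(\Omega,g|_\Omega)$. The starting point is Lemma \ref{lem:integration_by_parts}: taking $f = h_0$ and $w = w_\epsilon$ the Gaussian beam propagating backwards along $\gamma(t;y_0,\eta_0)$, we get
\begin{equation*}
(h_0, w_\epsilon)_{\E'(\R^n \times (T_0, t_0))}
= (\Tr_\Sigma \p_t W_\Omega \chi L h_0, w_\epsilon(t_0))_{\D'(\Omega)}
- (\Tr_\Sigma W_\Omega \chi L h_0, \p_t w_\epsilon(t_0))_{\D'(\Omega)},
\end{equation*}
provided the hypotheses of that lemma hold. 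To ensure this, first I would check that $h_0 \in \widetilde H^{-1}(-\epsilon_0,\epsilon_0) \otimes \widetilde H^{-1}_p(M)$ for a suitable $\epsilon_0 > 0$, which was established in Section \ref{sec:pseudo_white_noise}, and that $w_\epsilon(t_0) = \chi U^N_\epsilon(0,\cdot)$ and $\p_t w_\epsilon(t_0) = -\chi \p_t U^N_\epsilon(0,\cdot)$ are compactly supported in $\Omega$; the latter holds because $y_0 \in \Omega^{int}$ and the cutoff $\chi$ is supported near $\gamma(0) = y_0$, so for $\chi$ chosen with small enough support we have $\supp(w_\epsilon(t_0)) \cup \supp(\p_t w_\epsilon(t_0)) \Subset \Omega$. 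Then $r = d_\Omega(\supp(w_\epsilon(t_0)) \cup \supp(\p_t w_\epsilon(t_0)), \p\Omega) > 0$ is fixed, and we pick $\chi_{\mathrm{time}} \in C_c^\infty(T_0,t_0)$ equal to $1$ near $[-\epsilon_0, t_0 - r]$ as required.

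The key observation is that every ingredient on the right-hand side is determined by $Lh_0$ and $(\Omega, g|_\Omega)$: the operators $\Tr_\Sigma W_\Omega$ and $\Tr_\Sigma \p_t W_\Omega$ (extended as in Lemma \ref{lem:extension_of_measurement}) only involve solving the wave equation in the exterior region $\Omega$ with Dirichlet data on $\p\Omega$, and $\p\Omega = \p B(0,R) \cup \p M$ with $Lh_0$ known on $(T_0,T)\times \p M$ and $Lh_0$ declared to be $0$ on $\p B(0,R)$; moreover $\chi_{\mathrm{time}}$, $R$, and $\Omega$ depend only on known quantities. On the other side, $w_\epsilon(t_0)$ and $\p_t w_\epsilon(t_0)$ are explicit: by Remark \ref{rem:gaussian_beam_locality}, $U^N_\epsilon(0,x)$ depends on $g$ only through $g_{jk}(y_0)$ and $\p_t U^N_\epsilon(0,x)$ only through finitely many derivatives $\p^\alpha g_{jk}(y_0)$; since $y_0 \in \Omega^{int}$ and the metric is known on $\Omega$ — indeed known globally outside $M$ — these Gaussian beam initial data are explicitly computable from $(\Omega, g|_\Omega)$. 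Hence the right-hand side of the integration-by-parts identity, and therefore $(h_0,w_\epsilon)_{\E'}$, is determined; taking $\epsilon^{n/4}$ times it and sending $\epsilon \to 0$ yields $S(y_0,\eta_0,t_0)$, which by Lemma \ref{lem:testing_with_gaussian_beam} exists as a limit.

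The main technical point to be careful about — and the step I expect to require the most attention — is the interplay between the cutoff $\chi$ in space (used in the definition of the Gaussian beam solution $w_\epsilon$ in (\ref{eq:gaussian_beam_solution})) and the regions of validity: we need $\chi U^N_\epsilon(0,\cdot)$ supported in $\Omega$ while simultaneously $\chi \equiv 1$ near $\gamma(0)$, and we need the Gaussian beam estimate (\ref{eq:gaussian_beam_is_close_to_formal}) to control the pairing uniformly. Since $y_0 \in \Omega^{int}$, a small ball around $y_0$ lies in $\Omega$, so such a $\chi$ exists; but one should note that $w_\epsilon$ is not literally a formal Gaussian beam, only $\epsilon$-close to one in the sense of (\ref{eq:gaussian_beam_is_close_to_formal}), and this is exactly why passing to the limit $\epsilon\to 0$ — rather than working at fixed $\epsilon$ — is essential: the error terms $O(\epsilon^{N-(j+|\alpha|)-n/4})$ vanish in the limit for $N \ge 1 + n/4$. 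Assembling these observations gives the claim: $Lh_0$ together with $(\Omega, g|_\Omega)$ determine $S(y_0,\eta_0,t_0)$ for all $(y_0,\eta_0) \in S\Omega$ and $t_0 \in (0,T)$.
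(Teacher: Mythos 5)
Your proposal is correct and follows essentially the same route as the paper's own proof: apply Lemma \ref{lem:integration_by_parts} with $f=h_0$ and $w=w_\epsilon$, note via Remark \ref{rem:gaussian_beam_locality} that the Gaussian beam initial data at $t_0$ are computable from $g|_\Omega$ since $y_0\in\Omega^{int}$, observe that $\Tr_\Sigma \p_t^j W_\Omega$, $j=0,1$, are determined by the exterior geometry, and pass to the limit $\epsilon\to 0$ to recover $S(y_0,\eta_0,t_0)$. Your added care about the spatial cut-off $\chi$, the time cut-off, and the role of the estimate (\ref{eq:gaussian_beam_is_close_to_formal}) fills in details the paper leaves implicit, but does not change the argument.
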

\begin{proof}
Let $w_\epsilon$ be a Gaussian beam of order $N \ge 1 + n/4$ 
propagating along the geodesic $\gamma(\cdot; y_0, \eta_0)$ 
backwards on time interval $(T_0, t_0)$.
We may choose the cut-off function $\chi$ 
in the equation (\ref{eq:gaussian_beam_solution})
so that $w_\epsilon(t_0), \p_t w_\epsilon(t_0) \in C_c^\infty(\Omega)$.
As $g|_\Omega$ is known, we have by Remark \ref{rem:gaussian_beam_locality} that
the initial data $w_\epsilon(t_0)$, $\p_t w_\epsilon(t_0)$ are known.
Moreover, operators $\Tr_\Sigma \p_t^j W_\Omega$, $j = 0,1$, $\Sigma := \{t_0\} \times \Omega$,
are known. After choosing a suitable cut-off function $\chi$ 
in Lemma \ref{lem:integration_by_parts}
we have that the measurement $Lh_0$ determines 
the distributional pairing $(
h_0, w_\epsilon)_{\E'(\R^n \times (T_0, t))}$.
Hence $S(y_0, \eta_0, t_0)$ is determined.
\end{proof}

The implicit function theorem yields the following remark. 
Note that $t_0 \in \R$ in the remark is not necessarily the first 
intersection time. 

\begin{remark}
\label{rem:transverse_intersection}
Let $(y_0, \eta_0) \in S \R^n$ and $t_0 \in \R$ satisfy
\begin{equation*}
(\gamma(t_0; y_0, \eta_0), \dot \gamma(t_0; y_0, \eta_0)) \in \p_{\pm} S M.
\end{equation*}
Then there are neighborhoods $I \subset \R$ and $U \subset S \R^n$ of
$t_0$ and $(y_0, \eta_0)$ and a smooth map $\ell : U \to I$ such that for 
$t \in I$ and $(y, \eta) \in U$
\begin{align*}
\gamma(t; y, \eta) \in 
\begin{cases}
M, & \text{for}\ \pm t < \pm \ell(y,\eta),
\\ \p M, & \text{for}\ t = \ell(y,\eta),
\\ \Omega, & \text{for}\ \pm t > \pm \ell(y,\eta).
\end{cases}
\end{align*}
\end{remark}

We remind the reader that $\tau(x, \xi)$, $(x, \xi) \in T\R^n$, is defined as the first intersection time 
with $\p M$, that is 
\begin{equation*}
\tau(y_0, \eta_0) := \inf\{ t \in (0, \infty] ; \gamma(t; y_0, \eta_0) \in \p M \}.
\end{equation*} 
In the following, we use the Sasaki metric on the tangent bundle $TM$.

\begin{lemma}
\label{lem:semicontinuity_of_tau}
The first intersection times $\tau : S \Omega \to (0, \infty]$ 
and $\tau : \p_- SM \to (0, \infty]$ are lower semi-continuous.
\end{lemma}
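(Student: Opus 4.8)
The plan is to prove lower semicontinuity directly from the definition of $\tau$ as an infimum, using continuous dependence of geodesics on initial conditions together with the (relative) openness of $M$. Recall that for $(y,\eta)\in S\R^n$ the geodesic flow $(t,y,\eta)\mapsto(\gamma(t;y,\eta),\dot\gamma(t;y,\eta))$ is smooth where defined, and in particular jointly continuous; since we have arranged $\p^\alpha_x g_{jk}$ bounded on $\R^n$ the geodesics are defined for all $t$. Fix $(y_0,\eta_0)$ in $S\Omega$ (or in $\p_-SM$) and set $\tau_0:=\tau(y_0,\eta_0)$. If $\tau_0=\infty$ there is nothing to do at that point, so assume first $\tau_0<\infty$; I must show that for every $\delta>0$ with $\delta<\tau_0$ there is a neighborhood $V$ of $(y_0,\eta_0)$ in the relevant bundle such that $\tau(y,\eta)>\tau_0-\delta$ for all $(y,\eta)\in V$.

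The key point is that on the compact time interval $[0,\tau_0-\delta]$ the geodesic $\gamma(\cdot;y_0,\eta_0)$ stays in the open set $\R^n\setminus\p M$: by definition of the infimum, $\gamma(t;y_0,\eta_0)\notin\p M$ for $0<t<\tau_0$, and for $(y_0,\eta_0)\in S\Omega$ we have $\gamma(0;y_0,\eta_0)=y_0\in\Omega\subset\R^n\setminus\p M$ as well; in the boundary case $(y_0,\eta_0)\in\p_-SM$ the curve immediately enters $M$ by Remark \ref{rem:transverse_intersection}, so again $\gamma(t;y_0,\eta_0)\in M\subset\R^n\setminus\p M$ for small $t>0$, hence for all $t\in(0,\tau_0-\delta]$, and by continuity we may include the endpoint $t=0$ after shrinking if needed. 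Thus the compact set $K:=\{\gamma(t;y_0,\eta_0): t\in[0,\tau_0-\delta]\}$ is disjoint from the closed set $\p M$, so it has a positive distance $\rho>0$ from $\p M$. By joint continuity of the geodesic flow and compactness of $[0,\tau_0-\delta]$, there is a neighborhood $V$ of $(y_0,\eta_0)$ such that for all $(y,\eta)\in V$ and all $t\in[0,\tau_0-\delta]$ the point $\gamma(t;y,\eta)$ lies within distance $\rho/2$ of $K$, hence stays off $\p M$; therefore $\tau(y,\eta)\geq\tau_0-\delta$, and in fact $>\tau_0-\delta$ since $\tau(y,\eta)$ cannot equal a value at which the geodesic is bounded away from $\p M$ (use the same argument on a slightly longer interval, or note $\tau$ is never attained in the open interval by the same uniform-distance reasoning). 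This is exactly lower semicontinuity. In the remaining case $\tau_0=\infty$, the same argument applied on $[0,N]$ for arbitrarily large $N$ (using that $\gamma(t;y_0,\eta_0)\notin\p M$ for all $t>0$) produces a neighborhood on which $\tau\geq N$, so $\liminf\tau(y,\eta)=\infty=\tau_0$.

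The main technical obstacle is handling the endpoint $t=0$ and the transversality at the boundary in the case $(y_0,\eta_0)\in\p_-SM$: here $\gamma(0;y_0,\eta_0)=y_0\in\p M$, so the naive claim ``$K$ is disjoint from $\p M$'' fails at $t=0$, and one must instead work on $[t_1,\tau_0-\delta]$ for a small $t_1>0$ and separately control the short initial segment. For that short segment one invokes Remark \ref{rem:transverse_intersection} (equivalently, strict convexity of $\p M$ is \emph{not} needed — only that the geodesic leaves $\p M$ for $t>0$, which follows from $(y_0,\eta_0)\in\p_-SM$ meaning the velocity points strictly into $M$, so $\gamma(t;y_0,\eta_0)\in M$ for small $t>0$, with the same persisting for nearby $(y,\eta)\in\p_-SM$ by the implicit function theorem applied to the first return to $\p M$). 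Combining the two estimates gives a single neighborhood on which $\tau>\tau_0-\delta$. I do not expect any further subtlety; everything else is a routine compactness-and-continuity argument.
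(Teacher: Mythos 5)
Your proof is correct, and it differs from the paper's mainly in mechanism rather than in substance. For the interior part the paper argues sequentially: for a fixed $t\in(0,\tau_0)$ it uses that the geodesics have unit speed, so a boundary hit at time $\tau_j$ forces $|t-\tau_j|\ge d(\gamma_j(t),\p M)$, and then pointwise convergence $\gamma_j(t)\to\gamma_0(t)$ at that single time already excludes every value of $(0,\tau_0)$ as a possible $\liminf$ of $\tau_j$. You instead run the standard tube argument: take the compact segment $\gamma_0([0,\tau_0-\delta])$ (resp.\ $\gamma_0([t_1,\tau_0-\delta])$ in the boundary case), its positive distance $\rho$ to $\p M$, and uniform closeness of nearby geodesics over the compact time interval. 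Your version yields the slightly more quantitative statement that $\tau\ge\tau_0-\delta$ on an explicit neighborhood, at the price of needing continuity of the flow uniformly in $t$ on compact intervals (which indeed holds here); the paper's trick only needs continuity at one time. The genuinely delicate point, ruling out $\tau\to 0$ along $\p_- SM$, you treat exactly as the paper does, via Remark \ref{rem:transverse_intersection} (the crossing time $\ell$ vanishes on $U\cap\p_- SM$, giving a uniform $r>0$ with $\gamma(t;y,\eta)\in M$ for $t\in(0,r)$), so there is no gap there. Two cosmetic remarks: the parenthetical claim of strict inequality $\tau(y,\eta)>\tau_0-\delta$ is unnecessary for lower semicontinuity (the non-strict bound for every $\delta>0$ suffices), and completeness of the geodesics is most cleanly justified by the uniform ellipticity (\ref{assumption_1}), which makes $g$ comparable to the Euclidean metric, rather than by boundedness of $\p_x^\alpha g_{jk}$ alone.
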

\begin{proof}
Let us consider $\tau$ on $S \Omega$.
Let a sequence $( (y_j, \eta_j))_{j=1}^\infty \subset S \Omega$ converge to $(y_0, \eta_0) \in S \Omega$
as $j \to \infty$.
We denote $\gamma_j(t) := \gamma(t; y_j, \eta_j)$ and $\tau_j := \tau(y_j, \eta_j)$.

We will show next that 
$\liminf_{j \to \infty} \tau_j \notin (0, \tau_0)$.
Let $t \in (0, \tau_0)$. Then $\gamma_0(t) \notin \p M$ and
\begin{equation*}
d(\gamma_0(t), \p M) > 0.
\end{equation*}
Let $j \in \Z_+$. Suppose for a moment that $\tau_j < \infty$. 
Noting that $\gamma_j$ is unit speed and $\gamma_j(\tau_j) \in \p M$, we have
\begin{equation*}
|t - \tau_j| \ge d(\gamma_j(t), \gamma_j(\tau_j))
\ge d(\gamma_j(t), \p M). 
\end{equation*}
If $\tau_j = \infty$, then $|t - \tau_j| = \infty > d(\gamma_j(t), \p M)$.

The convergence $\gamma_j(t) \to \gamma_0(t)$, as $j \to \infty$, implies
that for large $j$ 
\begin{equation*}
|t - \tau_j| \ge d(\gamma_0(t), \p M)/2 > 0.
\end{equation*}
Hence $\liminf_{j \to \infty} \tau_j \ne t$ 
for all $t \in (0, \tau_0)$.

Clearly $\liminf_{j \to \infty} \tau_j \ge 0$,
and  there is $J \in \Z_+$ such that 
\begin{equation*}
\tau_j \ge d(y_j, \p M) \ge d(y_0, \p M)/2 > 0, \quad j \ge J.
\end{equation*}
Hence $\liminf_{j \to \infty} \tau_j \ne 0$ and 
$\liminf_{j \to \infty} \tau_j \ge \tau_0$.

Let us consider $\tau$ on $\p_- SM$.
Let a sequence $( (y_j, \eta_j))_{j=1}^\infty \subset \p_- SM$ converge to $(y_0, \eta_0) \in \p_- SM$
as $j \to \infty$.
We denote $\gamma_j(t) := \gamma(t; y_j, \eta_j)$ and $\tau_j := \tau(y_j, \eta_j)$.

Repeating the above argument, we see that $\liminf_{j \to \infty} \tau_j \notin (0, \tau_0)$.
Thus it is enough to show that $\liminf_{j \to \infty} \tau_j \ne 0$.

Remark \ref{rem:transverse_intersection}
gives neighborhoods $I \subset \R$ and $U \subset S \R^n$ of zero and $(y_0, \eta_0)$ 
and a map $\ell : U \to I$ of boundary intersection times.
We denote $V := U \cap \p_- SM$. As
$\gamma(0; x, \xi) \in \p M$ for $(x, \xi) \in V$,
we have $\ell = 0$ in $V$. 
In particular $r := d(\ell(V), \R \setminus I) > 0$.
For large $j$, $(\gamma_j(0), \dot \gamma_j(0)) \in V$ and thus
\begin{equation*}
\gamma_j(t) \in M, \quad t \in (0, r).
\end{equation*}
Hence $\tau_j \ge r > 0$ for large $j$, and 
$\liminf_{j \to \infty} \tau_j \ge \tau_0$.
\end{proof}

We easily see the following continuity result for $\tau$.

\begin{lemma}
\label{lem:S_is_enough}
Let $( (y_j, \eta_j))_{j=1}^\infty \subset S \Omega$ converge to $(x, \xi) \in \p_- SM$ in the Sasaki metric.
Then $\lim_{j \to \infty} \tau(y_j, \eta_j) = 0$.
\end{lemma}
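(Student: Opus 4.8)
The plan is to derive the statement directly from Remark~\ref{rem:transverse_intersection}, applied at the limit point $(x,\xi)$. Since $(x,\xi)\in\p_-SM$ we have $\gamma(0;x,\xi)=x\in\p M$ and the geodesic crosses $\p M$ transversally at $x$, so I would first apply Remark~\ref{rem:transverse_intersection} with $(y_0,\eta_0)=(x,\xi)$ and $t_0=0$. This yields neighbourhoods $I\subset\R$ of $0$ and $U\subset S\R^n$ of $(x,\xi)$ together with a smooth boundary-intersection-time map $\ell:U\to I$; by transversality $0$ is the unique time in $I$ at which $\gamma(\cdot;x,\xi)$ meets $\p M$, so $\ell(x,\xi)=0$. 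For the lower sign (the case $\p_-SM$) the trichotomy in Remark~\ref{rem:transverse_intersection} reads: for $(y,\eta)\in U$ and $t\in I$ one has $\gamma(t;y,\eta)\in\Omega$ when $t<\ell(y,\eta)$, $\gamma(t;y,\eta)\in\p M$ when $t=\ell(y,\eta)$, and $\gamma(t;y,\eta)\in M$ when $t>\ell(y,\eta)$.

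Next I would feed in the hypothesis. Since $(y_j,\eta_j)\to(x,\xi)$, there is $J$ with $(y_j,\eta_j)\in U$ for all $j\ge J$; fix such a $j$. Because $(y_j,\eta_j)\in S\Omega$, the base point $y_j=\gamma(0;y_j,\eta_j)$ lies in the open set $\Omega$, and since $0\in I$ the trichotomy above forces $0<\ell(y_j,\eta_j)$. Moreover $\gamma(\ell(y_j,\eta_j);y_j,\eta_j)\in\p M$ with $\ell(y_j,\eta_j)\in(0,\infty)$, so by the very definition of $\tau(y_j,\eta_j)$ as the infimum over $t\in(0,\infty]$ with $\gamma(t;y_j,\eta_j)\in\p M$ we get $0\le\tau(y_j,\eta_j)\le\ell(y_j,\eta_j)$.

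Finally, letting $j\to\infty$ and using that $\ell$ is continuous with $\ell(x,\xi)=0$, we have $\ell(y_j,\eta_j)\to0$, and the sandwich $0\le\tau(y_j,\eta_j)\le\ell(y_j,\eta_j)$ gives $\lim_{j\to\infty}\tau(y_j,\eta_j)=0$, as claimed. There is no serious obstacle here; the only points requiring care are bookkeeping ones: that $\p_-SM$ corresponds to the lower sign in Remark~\ref{rem:transverse_intersection}, so that a geodesic based at a nearby point of $\Omega$ with nearly-inward direction is in $\Omega$ just before time $\ell$ and in $M$ just after, and that one only needs the upper estimate $\tau\le\ell$ (the geodesic could conceivably meet $\p M$ earlier, which only helps).
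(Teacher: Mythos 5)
Your argument is correct. The paper itself offers no proof of this lemma (it is introduced only with ``We easily see the following continuity result for $\tau$''), so there is nothing to compare against line by line; your route via Remark~\ref{rem:transverse_intersection} applied at $(x,\xi)$ with $t_0=0$ is exactly the natural way to make the omitted argument precise, and it is consistent with how the paper uses that remark in the neighbouring results (the proof of Lemma~\ref{lem:semicontinuity_of_tau}, where the minus-sign trichotomy is likewise read as $\Omega$ before $\ell$, $\p M$ at $\ell$, $M$ after $\ell$, and the construction in Theorem~\ref{thm:determination_of_tau}). The bookkeeping points you flag are the right ones, and the bound $0\le\tau(y_j,\eta_j)\le\ell(y_j,\eta_j)$ together with smoothness of $\ell$ and $\ell(x,\xi)=0$ indeed closes the proof.
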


\begin{figure}
\begin{minipage}{0.5\linewidth}
\def\svgwidth{0.8\textwidth}
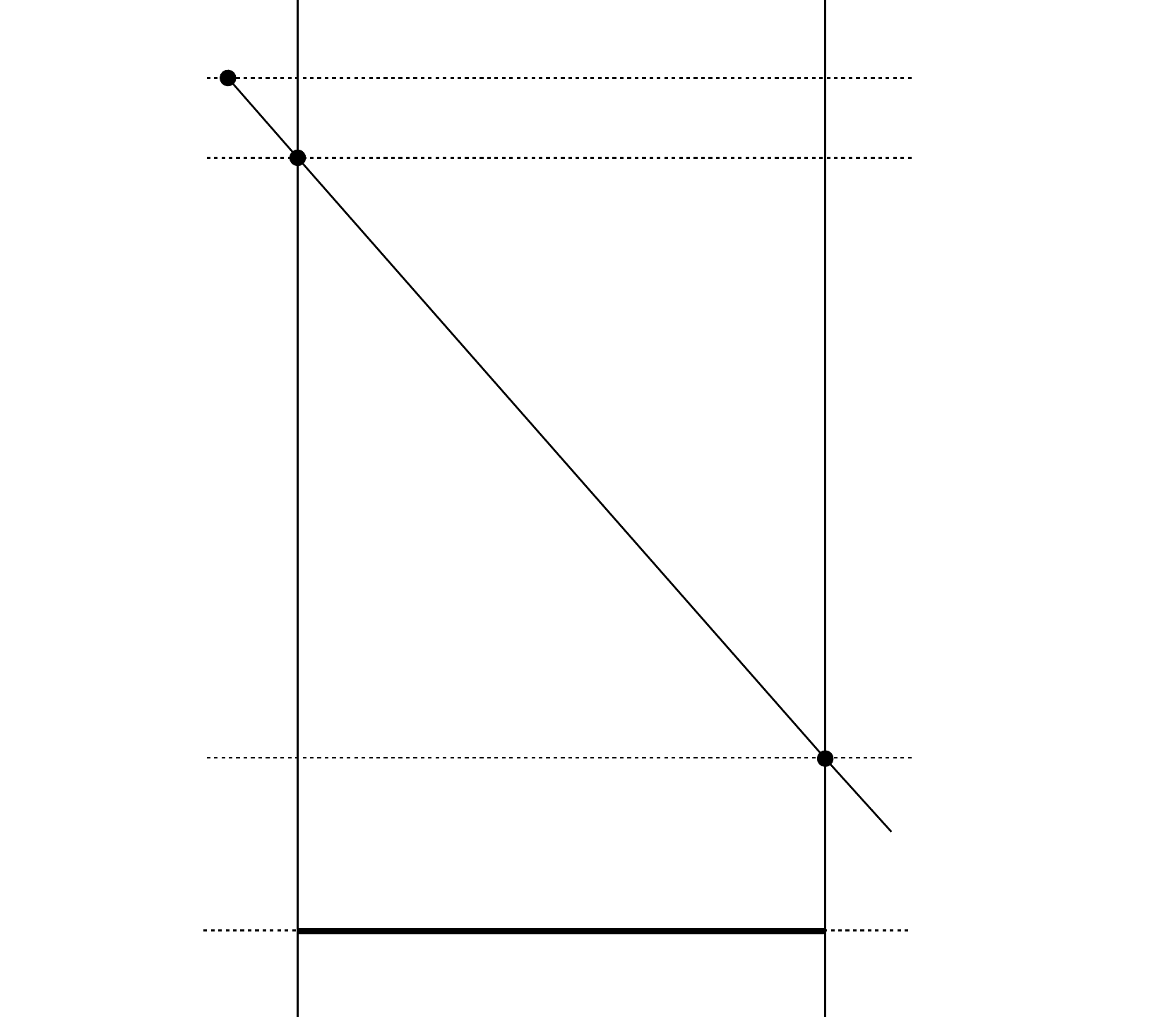
\end{minipage}
\begin{minipage}{0.5\linewidth}
\def\svgwidth{\textwidth}
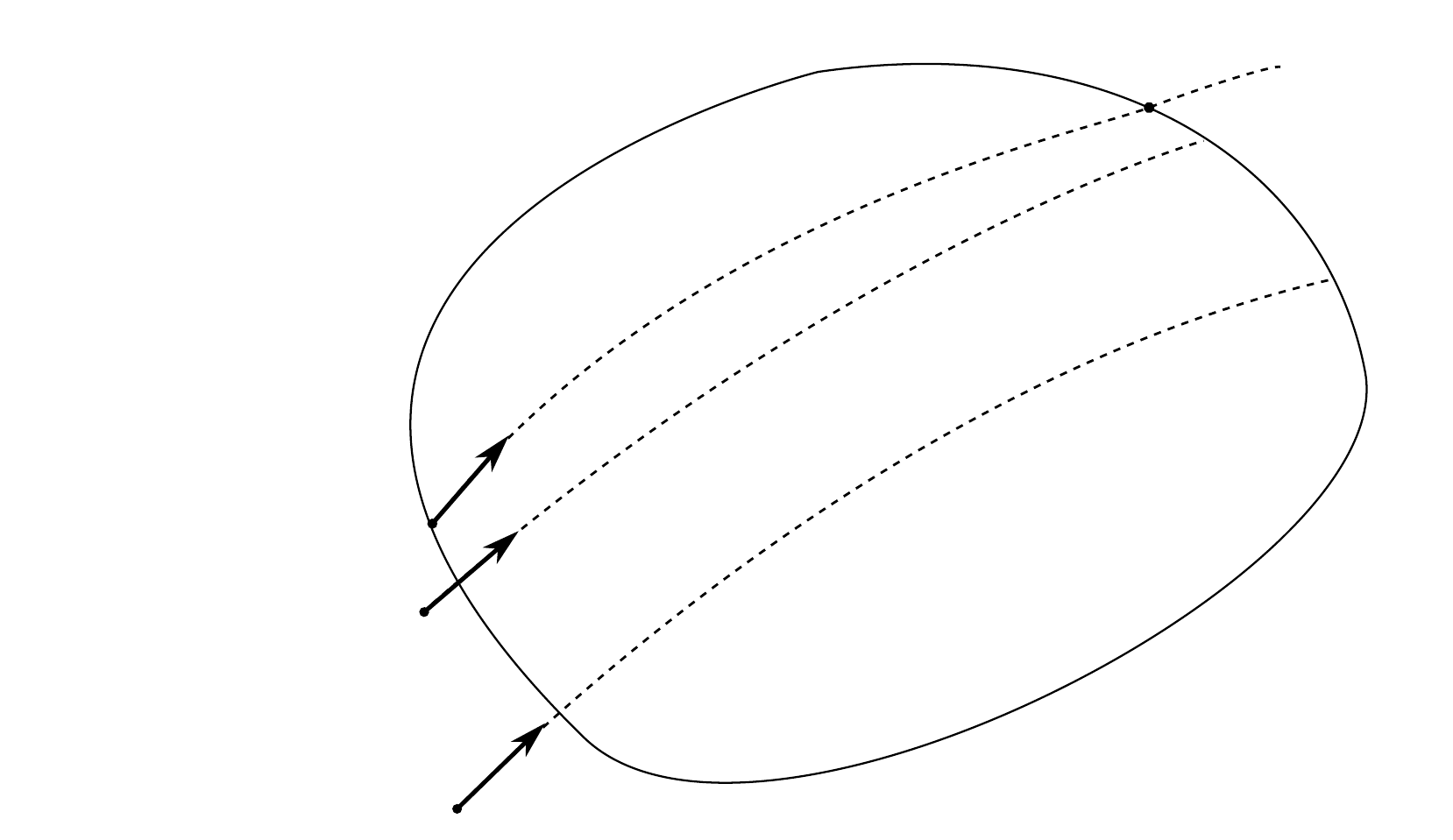
\end{minipage}
\caption{On left, trajectory of a Gaussian beam 
propagating along geodesic $\gamma(t) := \gamma(t; y_j, \eta_j)$
backwards on time interval $(T_0, t_j)$.
If $S(y_j, \eta_j, t_j) \ne 0$, then 
there is a point source at $\gamma(t_j)$.
On right, a sequnce $(y_j, \eta_j) \in S\Omega$ 
converging to $(x, \xi) \in \p_- S M$
and trajectories of the corresponding geodesics.
}\end{figure}

\begin{theorem}
\label{thm:determination_of_tau}
Let $(x, \xi) \in \p_- S M$ and denote by 
$J(x,\xi)$ the set of sequences  $((t_j; y_j, \eta_j))_{j=1}^\infty \subset (0,\infty) \times S\Omega$
for which
\begin{equation*}
\lim_{j \to \infty} (y_j, \eta_j) = (x,\xi), 
\quad \lim_{j \to \infty} t_j \in (0, \infty),
\quad S(y_j, \eta_j, t_j) \ne 0.
\end{equation*}
The function $S : S \Omega \times (0,\infty) \to \C$ 
determines $\tau : \p_- SM \to (0, \infty]$ by the formula
\begin{equation*}
\tau(x, \xi) = \inf\{\lim_{j \to \infty} t_j;\ ( (t_j; y_j, \eta_j))_{j=1}^\infty \in J(x,\xi)\
\text{for some $( (y_j, \eta_j))_{j=1}^\infty \subset S \Omega$} \}.
\end{equation*}

Moreover, if $\tau(x, \xi) < \infty$, then there is a sequence 
$((t_j; y_j, \eta_j))_{j=1}^\infty \in J(x,\xi)$
satisfying
\begin{equation*}
\tau(x,\xi) = \lim_{j \to \infty} t_j.
\end{equation*}
\end{theorem}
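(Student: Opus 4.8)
The plan is to establish the two claimed directions separately, using the lower semi-continuity of $\tau$ (Lemma \ref{lem:semicontinuity_of_tau}), the continuity result of Lemma \ref{lem:S_is_enough}, the characterization of $S$ via Gaussian beams (Lemma \ref{lem:testing_with_gaussian_beam} and Definition \ref{def:hit_functions}), and the transversality statement of Remark \ref{rem:transverse_intersection}. Let me write $\tau^* := \inf\{\lim_j t_j ; ((t_j;y_j,\eta_j)) \in J(x,\xi)\}$ for the right-hand side of the claimed formula; the goal is $\tau^* = \tau(x,\xi)$, together with attainment when $\tau(x,\xi) < \infty$.

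First I would prove $\tau^* \ge \tau(x,\xi)$. Take any sequence $((t_j; y_j,\eta_j)) \in J(x,\xi)$ with $t_* := \lim_j t_j \in (0,\infty)$. Since $S(y_j,\eta_j,t_j) \ne 0$, Lemma \ref{lem:testing_with_gaussian_beam} (combined with Definition \ref{def:hit_functions}, noting $u_0(t,x)|g|^{1/2}(x) \ne 0$) forces $\gamma(t_j; y_j,\eta_j) = x_{k(j)}$ for some $x_{k(j)} \in \p M$; in particular $\gamma_j(t_j) \in \p M$, so $\tau_j := \tau(y_j,\eta_j) \le t_j$. By lower semi-continuity of $\tau$ on $S\Omega$, $\tau(x,\xi) \le \tau(y_0,\eta_0)$ where $(y_0,\eta_0)$ is the limit; but $(y_j,\eta_j) \to (x,\xi) \in \p_- SM$, so I should instead apply lower semi-continuity along the sequence in $S\Omega$ together with the fact that $\tau$ on $\p_- SM$ is also lower semi-continuous and the two agree in the limit. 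More carefully: $\tau(x,\xi) \le \liminf_j \tau_j \le \liminf_j t_j = t_*$ — here the first inequality is exactly the lower semi-continuity from Lemma \ref{lem:semicontinuity_of_tau}, interpreting $(y_j,\eta_j) \in S\Omega \to (x,\xi)$ and using that the boundary value $\tau(x,\xi)$ on $\p_-SM$ is bounded above by $\liminf$ of the interior values (this needs the argument of that lemma's proof, which shows $\liminf \tau_j \ge \tau_0$ with $\tau_0$ the value at the limit point; one runs it for a sequence entering from $\Omega$). Taking the infimum over all such sequences gives $\tau^* \ge \tau(x,\xi)$.

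Next I would prove $\tau^* \le \tau(x,\xi)$ and simultaneously handle attainment. If $\tau(x,\xi) = \infty$ there is nothing to prove, so assume $\tau(x,\xi) = t_0 < \infty$. Then $(\gamma(t_0;x,\xi), \dot\gamma(t_0;x,\xi)) \in \p_+ SM$ (it is the exit point, transverse by the definition of $\tau$ as first intersection and strict convexity is not needed here — the geodesic leaves $M$), so Remark \ref{rem:transverse_intersection} applies at $(x,\xi,t_0)$: there are neighborhoods $I \ni t_0$, $U \ni (x,\xi)$ and a smooth map $\ell : U \to I$ with $\gamma(t;y,\eta) \in \p M$ exactly at $t = \ell(y,\eta)$ and in $\Omega$ for $t > \ell(y,\eta)$. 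Now I pick $(y_j,\eta_j) \in S\Omega$ converging to $(x,\xi)$ (possible since $\p_-SM$ is in the closure of $S\Omega$) with $(y_j,\eta_j) \in U$ for large $j$, and I choose $x_{k(j)} \in \p M$ converging to $\gamma(t_0;x,\xi)$ with $x_{k(j)}$ on the geodesic $\gamma(\cdot;y_j,\eta_j)$ — this is the delicate point. The honest route: for each $j$, $\gamma(\ell(y_j,\eta_j); y_j,\eta_j) \in \p M$, and $\ell(y_j,\eta_j) \to \ell(x,\xi) = t_0$; but this boundary point need not be one of the dense points $x_k$. To fix this, I use density of $(x_k)$ in $\p M$ together with a diagonal/perturbation argument: perturb $(y_j,\eta_j)$ slightly within $S\Omega \cap U$ so that the geodesic hits a point $x_{k(j)}$ of the dense set, with $k(j) \to \infty$ and the hitting time $t_j \to t_0$. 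Concretely, the evaluation map $(y,\eta) \mapsto \gamma(\ell(y,\eta); y,\eta) \in \p M$ is smooth and, restricted to an appropriate slice transverse to the geodesic flow, is a submersion onto a neighborhood of $\gamma(t_0;x,\xi)$ in $\p M$; so its image contains infinitely many $x_k$ accumulating at $\gamma(t_0;x,\xi)$, and preimages $(y_j,\eta_j)$ can be taken in $S\Omega$ converging to $(x,\xi)$. For each such $j$, set $t_j := \ell(y_j,\eta_j)$; then $\gamma(t_j;y_j,\eta_j) = x_{k(j)}$, so by Lemma \ref{lem:testing_with_gaussian_beam}, $S(y_j,\eta_j,t_j) = a_{k(j)} u_0(t_j,x_{k(j)}) |g|^{1/2}(x_{k(j)}) \ne 0$ (using $a_k \ne 0$ for all $k$, from the "disjoint numbers" hypothesis in Definition \ref{def:pseudorandom_noise} — all $a_k$ nonzero since they are distinct and summable so at most one could be zero; if some $a_k = 0$ we simply discard that index, as it never contributes). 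Hence $((t_j;y_j,\eta_j)) \in J(x,\xi)$ with $\lim_j t_j = t_0$, giving both $\tau^* \le t_0 = \tau(x,\xi)$ and the attainment statement. Combining with the first direction, $\tau^* = \tau(x,\xi)$.

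The main obstacle is the construction in the last paragraph: arranging a sequence $(y_j,\eta_j) \in S\Omega$, converging to $(x,\xi)$, whose geodesics actually pass through points of the prescribed dense set $(x_k)_{k \in \Z_+}$ at times converging to $\tau(x,\xi)$, with the indices $k(j) \to \infty$ (the latter being automatic since the $x_{k(j)}$ are distinct and converge). Everything else — the inequalities, the use of lower semi-continuity, the identification of $S \ne 0$ with "geodesic hits a source point" — is a direct application of the lemmas already proved. I expect this construction to go through cleanly via the implicit-function-theorem picture of Remark \ref{rem:transverse_intersection} combined with density of $(x_k)$ in $\p M$ and a transversality (submersion) argument for the boundary-hit map, but it requires care to ensure the perturbed initial points genuinely lie in the open set $\Omega$ rather than on $\p M$ or inside $M$.
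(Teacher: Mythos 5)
Your first inequality breaks down at the step $\tau(x,\xi)\le\liminf_j\tau(y_j,\eta_j)$. For a sequence $(y_j,\eta_j)\in S\Omega$ converging to $(x,\xi)\in\p_-SM$, Lemma \ref{lem:S_is_enough} --- which you yourself invoke elsewhere --- gives $\tau(y_j,\eta_j)\to 0$, whereas $\tau(x,\xi)>0$ since $\xi$ points strictly into $M$. The semicontinuity of Lemma \ref{lem:semicontinuity_of_tau} is proved only for sequences in $S\Omega$ with limit in $S\Omega$, or in $\p_-SM$ with limit in $\p_-SM$; it cannot be ``run for a sequence entering from $\Omega$'', because the first hit of such a geodesic is the nearby entry crossing, not the far hit that your times $t_j$ detect. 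The missing idea is the paper's: replace $(y_j,\eta_j)$ by its first boundary hit $(\tilde y_j,\xi_j):=(\gamma(\tau_j;y_j,\eta_j),\dot\gamma(\tau_j;y_j,\eta_j))$ with $\tau_j:=\tau(y_j,\eta_j)$. Since $S(y_j,\eta_j,t_j)\ne0$ the original geodesic meets $\p M$ at time $t_j$, so $\gamma(\cdot;\tilde y_j,\xi_j)$ meets $\p M$ at time $t_j-\tau_j$, which is positive for large $j$ because $t_j\to t_*>0$ and $\tau_j\to0$; hence $\tau(\tilde y_j,\xi_j)\le t_j-\tau_j$. Moreover $(\tilde y_j,\xi_j)\to(x,\xi)$ and lies in $\p_-SM$ for large $j$, so the $\p_-SM$ half of Lemma \ref{lem:semicontinuity_of_tau} gives $\lim_j t_j=\lim_j(t_j-\tau_j)\ge\liminf_j\tau(\tilde y_j,\xi_j)\ge\tau(x,\xi)$.

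In the attainment direction you apply Remark \ref{rem:transverse_intersection} at the exit end, asserting $(\gamma(t_0;x,\xi),\dot\gamma(t_0;x,\xi))\in\p_+SM$ ``because the geodesic leaves $M$''. That is not justified: the hypotheses are only $(x,\xi)\in\p_-SM$ and $\tau(x,\xi)<\infty$, and the first return to $\p M$ may be tangential --- this is precisely why the scattering relation maps into $\overline{\p_+SM}$ and why the paper later needs transversality arguments (Lemmas \ref{lem:parametric_transversality}--\ref{lem:transversal_perturbation_and_tau}) to recover exit directions. The paper avoids the issue by applying Remark \ref{rem:transverse_intersection} to the reversed data $(z,\zeta)=(\gamma(t_0;x,\xi),-\dot\gamma(t_0;x,\xi))$ at time $t_0$: the relevant crossing is then at the entry point, $(x,-\xi)\in\p_+SM$, which is automatically transversal. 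This also dissolves your acknowledged ``delicate point'': instead of a submersion argument for the boundary-hit map plus the unresolved worry that the perturbed initial points lie in $\Omega$, one simply sets $(y_j,\eta_j):=(\gamma(t_j;x_{k_j},\zeta),-\dot\gamma(t_j;x_{k_j},\zeta))$ for source points $x_{k_j}\to z$ and times $t_j>\ell(x_{k_j},\zeta)$ with $t_j\to t_0$; the trichotomy of the Remark puts $y_j$ in $\Omega$, time reversal shows the forward geodesic from $(y_j,\eta_j)$ passes through $x_{k_j}$ at time $t_j$, so $S(y_j,\eta_j,t_j)\ne0$, and $(y_j,\eta_j,t_j)\to(x,\xi,t_0)$. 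Your submersion sketch could probably be completed when the exit happens to be transversal, but as written the proposal has genuine gaps in both directions.
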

\begin{proof}
Let $(x, \xi) \in \p_- SM$ and $((t_j; y_j, \eta_j))_{j=1}^\infty \in J(x,\xi)$.
Let us show, that $\tau(x,\xi) \le \lim_{j \to \infty} t_j$.
By Lemma \ref{lem:S_is_enough}, $\tau_j := \tau(y_j, \eta_j) \to 0$ as $j \to \infty$.
We define
\begin{align*}
\tilde y_j := \gamma(\tau_j; y_j, \eta_j), \quad 
\xi_j := \dot \gamma(\tau_j; y_j, \eta_j).
\end{align*}
As $S(y_j, \eta_j, t_j) \ne 0$, we have
\begin{equation*}
\gamma(t_j - \tau_j; \tilde y_j, \xi_j) = \gamma(t_j; y_j, \eta_j) \in \p M.
\end{equation*}
As $\lim_{j \to \infty} t_j > 0$ and $\lim_{j \to \infty} \tau_j = 0$,
we have $t_j - \tau_j > 0$ for large $j$.
Thus $\tau(\tilde y_j, \xi_j) \le t_j - \tau_j$ for large $j$.
Moreover, 
\begin{equation*}
\lim_{j \to \infty} (\tilde y_j, \xi_j) = (\gamma(0; x, \xi), \dot \gamma(0; x, \xi)) = (x,\xi).
\end{equation*}
In particular, $(\tilde y_j, \xi_j) \in \p_- SM$ for large $j$. 
Hence Lemma \ref{lem:semicontinuity_of_tau} gives
\begin{equation*}
\lim_{j \to \infty} t_j 
= \lim_{j \to \infty} (t_j - \tau_j) 
\ge \liminf_{j \to \infty} \tau(\tilde y_j, \xi_j) \ge \tau(x,\xi).
\end{equation*}
In particular, we have proved the claim in the case $\tau(x, \xi) = \infty$.

Let us assume that $\tau(x, \xi) < \infty$.
It is enough to show, that there is a sequence 
$((t_j; y_j, \eta_j))_{j=1}^\infty \in J(x,\xi)$
satisfying $\tau(x,\xi) = \lim_{j \to \infty} t_j$.
We denote 
\begin{equation*}
t_0 := \tau(x,\xi), \quad
z := \gamma(t_0; x, \xi), 
\quad \zeta := -\dot \gamma(t_0; x, \xi).
\end{equation*}
We have
\begin{equation*}
(x, \xi) = (\gamma(t_0; z, \zeta), -\dot \gamma(t_0; z, \zeta)).
\end{equation*}
As $(x, \xi) \in \p_- S M$, 
Remark \ref{rem:transverse_intersection} gives
neighborhoods $I$ and $U$ of $t_0$ and $(z,\zeta)$ and a map $\ell : U \to I$ 
of boundary intersection times. 
After choosing local coordinates around $z$ we may define
\begin{equation*}
(y_j, \eta_j) := (\gamma(t_j; x_{k_j}, \zeta), -\dot \gamma(t_j; x_{k_j}, \zeta)),
\end{equation*}
where $(x_{k_j})_{j=1}^\infty \subset U$ is a subsequence
of the dense sequence of source points in (\ref{A formula})
satisfying $\lim_{j \to \infty} x_{k_j} = z$ and
$(t_j)_{j=1}^\infty \subset I$ satisfies 
\begin{equation*}
t_j > \ell(x_{k_j}, \zeta), \quad \lim_{j \to \infty} t_j = \ell(z, \zeta) = t_0.
\end{equation*}
Clearly $((t_j; y_j, \eta_j))_{j=1}^\infty \in J(x,\xi)$ and
\begin{equation*}
\lim_{j \to \infty} t_j = t_0 = \tau(x,\xi).
\end{equation*}
\end{proof}

\section{Determination of the scattering relation}
\label{sec:scattering_relation}

In the next theorem we consider pseudorandom noise $h_0(t,x)$ with 
coefficients 
\begin{equation*}
a_j = \lambda^{-\lambda^j},
\end{equation*}
with some $\lambda>1$ and make computations
``modulo an error in A'', where 
\begin{equation*}
A=\{-\lambda^j:\ j\in \N\}.
\end{equation*}
For this end, let $m_A(s)$ be 
the real number $r$ such that $s=r+a$ where $a\in A$ and 
$r$ has the smallest possible absolute value. 
In the case when both $r$ and $-r$
satisfy this condition, we choose the positive value.


\begin{lemma}
\label{lem:hits}
Let $(y_0,\eta_0)\in S\Omega$, 
$t_0 \in (0, T)$, and 
suppose that $S(y_0, \eta_0, t_0) \ne 0$. Then 
there is a sequence $((y_j, \eta_j))_{j=1}^\infty \subset S \Omega$
and $(t_j)_{j=1}^\infty \subset (0, T)$
such that
\begin{align} \label{eq:perturbed_hit}
&(y_j, \eta_j) \to (y_0, \eta_0),\ 
t_j \to t_0,\  
S(y_j, \eta_j, t_j) \to 0,\ \text{as $j \to \infty$},
\\\nonumber 
&S(y_j, \eta_j, t_j) \ne 0.
\end{align}

Suppose, moreover, that the coefficients of 
the pseudorandom noise $h_0$ are $a_j = \lambda^{-\lambda^j}$. 
Then for any sequences $((y_j, \eta_j))_{j=1}^\infty \subset T\R^n$
and $(t_j)_{j=1}^\infty \subset (0, T)$
satisfying (\ref{eq:perturbed_hit})
we have that
\begin{equation*}
\lim_{j \to \infty} m_A (\log_\lambda |S(y_j, \eta_j, t_j)|) 
= \log_\lambda|u_0(t_0, \gamma(t_0); y_0, \eta_0) |g|^{1/2}(\gamma(t_0))|,
\end{equation*}
where $\gamma(t) = \gamma(t; y_0, \eta_0)$
and $u_0$ is defined as in (\ref{rem:gaussian_beam_continuity_wrt_geodesic}).
\end{lemma}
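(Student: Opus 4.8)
The plan is to prove the two assertions of the lemma in turn, using Lemma~\ref{lem:testing_with_gaussian_beam} as the basic bookkeeping tool: by that lemma $S(y,\eta,t)\ne 0$ precisely when the geodesic $\gamma(\,\cdot\,;y,\eta)$ passes through some source point $x_l$ at time $t$, in which case $S(y,\eta,t)=a_l\,u_0(t,x_l;y,\eta)\,|g|^{1/2}(x_l)$; note that $u_0$ is nonvanishing along its own central geodesic and $|g|^{1/2}>0$, so this value is genuinely nonzero. In particular, since $S(y_0,\eta_0,t_0)\ne 0$, there is an index $j_0$ with $z:=\gamma(t_0;y_0,\eta_0)=x_{j_0}$, and we write $\zeta:=\dot\gamma(t_0;y_0,\eta_0)$.

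For the first assertion I would proceed as follows. Because the source points are distinct and dense in $\p M$, choose a subsequence $x_{k_j}\to z$ with $k_j\to\infty$ (so $x_{k_j}\ne z$ for large $j$). Let $\Phi_t$ denote the geodesic flow on $S\R^n$, which is defined for all $t\in\R$ since $(\R^n,g)$ is complete by (\ref{assumption_1}) and which depends smoothly on $t$ and on the initial point; set $(y_j,\eta_j):=\Phi_{-t_0}(x_{k_j},\zeta)$ and $t_j:=t_0$. Then $(y_j,\eta_j)\to\Phi_{-t_0}(z,\zeta)=(y_0,\eta_0)$, so $(y_j,\eta_j)\in S\Omega$ for large $j$, and $\gamma(t_0;y_j,\eta_j)=x_{k_j}$, whence $S(y_j,\eta_j,t_j)=a_{k_j}\,u_0(t_0,x_{k_j};y_j,\eta_j)\,|g|^{1/2}(x_{k_j})\ne 0$. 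Since $k_j\to\infty$ we have $a_{k_j}=\lambda^{-\lambda^{k_j}}\to 0$, while $u_0$ stays bounded by its joint smoothness in all arguments (see (\ref{rem:gaussian_beam_continuity_wrt_geodesic})) and $|g|^{1/2}(x_{k_j})\to|g|^{1/2}(z)$; hence $S(y_j,\eta_j,t_j)\to 0$, and the sequence satisfies (\ref{eq:perturbed_hit}).

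For the second assertion, fix any sequence satisfying (\ref{eq:perturbed_hit}). Since $S(y_j,\eta_j,t_j)\ne 0$, Lemma~\ref{lem:testing_with_gaussian_beam} produces indices $l_j$ with $\gamma(t_j;y_j,\eta_j)=x_{l_j}$ and $S(y_j,\eta_j,t_j)=\lambda^{-\lambda^{l_j}}c_j$, where $c_j:=u_0(t_j,x_{l_j};y_j,\eta_j)\,|g|^{1/2}(x_{l_j})\ne 0$. By joint continuity of the geodesic flow, $x_{l_j}=\gamma(t_j;y_j,\eta_j)\to\gamma(t_0;y_0,\eta_0)=z$. I then claim $l_j\to\infty$: otherwise, along a subsequence $l_j$ equals a fixed index $m$, forcing $x_m=z$ and hence $m=j_0$ by distinctness; but then along that subsequence $S(y_j,\eta_j,t_j)\to a_{j_0}\,u_0(t_0,z;y_0,\eta_0)\,|g|^{1/2}(z)\ne 0$, contradicting $S(y_j,\eta_j,t_j)\to 0$. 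With $l_j\to\infty$, smoothness of $u_0$ yields $c_j\to c_0:=u_0(t_0,z;y_0,\eta_0)\,|g|^{1/2}(z)\ne 0$, so $\log_\lambda|c_j|\to\log_\lambda|c_0|$ and in particular $M:=\sup_j\bigl|\log_\lambda|c_j|\bigr|<\infty$.

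It remains to carry out the $m_A$ computation, which is where the choice $a_j=\lambda^{-\lambda^j}$ is exploited. We have $\log_\lambda|S(y_j,\eta_j,t_j)|=-\lambda^{l_j}+\log_\lambda|c_j|$ with $-\lambda^{l_j}\in A$, and for every $m\ne l_j$ the estimate $|\lambda^m-\lambda^{l_j}|\ge(\lambda-1)\lambda^{l_j-1}$ holds; since $l_j\to\infty$, this exceeds $2M$ for large $j$, so $-\lambda^{l_j}$ is the unique element of $A$ nearest to $\log_\lambda|S(y_j,\eta_j,t_j)|$ and therefore $m_A(\log_\lambda|S(y_j,\eta_j,t_j)|)=\log_\lambda|c_j|$ for all large $j$. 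Letting $j\to\infty$ gives $m_A(\log_\lambda|S(y_j,\eta_j,t_j)|)\to\log_\lambda|c_0|=\log_\lambda|u_0(t_0,\gamma(t_0);y_0,\eta_0)\,|g|^{1/2}(\gamma(t_0))|$, as asserted. The main obstacle is the pair of continuity facts — that $l_j\to\infty$ and that $\log_\lambda|c_j|$ stays bounded — since once these are established the rest is the elementary observation that consecutive exponents $-\lambda^{j}$ are separated by gaps $(\lambda-1)\lambda^{j-1}$ growing without bound, so a bounded perturbation of $\log_\lambda|S(y_j,\eta_j,t_j)|$ cannot escape the residue class of $-\lambda^{l_j}$ once $l_j$ is large.
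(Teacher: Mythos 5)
Your proof is correct and takes essentially the same route as the paper's: identify the nonzero values of $S$ via Lemma \ref{lem:testing_with_gaussian_beam} as $a_{l}$ times a smooth nonvanishing factor, produce the perturbed sequence by flowing back from nearby source points along the same direction, show the hit index tends to infinity, and then use the growing gaps between the elements of $A$ together with the boundedness of $\log_\lambda|c_j|$ to evaluate $m_A$, exactly as in the paper (which is merely terser about the explicit construction of $(y_j,\eta_j,t_j)$ and about the gap estimate). The only slip is that in the first assertion, which is stated for a general pseudorandom noise, you invoke the specific form $a_j=\lambda^{-\lambda^{j}}$ to conclude $a_{k_j}\to 0$; this follows anyway from $\sum_j |a_j|<\infty$, so nothing is lost.
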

\begin{proof}
We will use notation 
\begin{align*}
&\gamma_j(t) := \gamma(t; y_j, \eta_j), \quad
z_j := \gamma_j(t_j), \quad
S_j := S(y_j, \eta_j, t_j), 
\\&\beta_j := |u_0(t_j, z_j; y_j, \eta_j) |g|^{1/2}(z_j)|.
\end{align*}

As $S_0 \ne 0$, we have that $z_0 = x_j$ 
for some $j = 1, 2, \dots$.
By continuity of the geodesic flow and 
and density of $(x_j)_{j=1}^\infty \subset \p M$,
there exists a subsequence 
$(x_{k_j})_{j=1}^\infty \subset (x_j)_{j=1}^\infty$ 
and sequences $((y_j, \eta_j))_{j=1}^\infty \subset T\R^n$
and $(t_j)_{j=1}^\infty \subset (0, T)$
such that
\begin{equation*}
x_{k_j} \to z_0, \quad (y_j, \eta_j) \to (y_0, \eta_0), 
\quad t_j \to t_0,\quad\hbox{as }j\to \infty
\end{equation*}
and $z_j = x_{k_j} \ne z_0$.
Then $|S_j| = |a_{k_j}| \beta_j \ne 0$.
As $x_{k_j} \ne z_0$ and $x_{k_j} \to z_0$,
we have that $k_j \to \infty$ and thus $a_{k_j} \to 0$.
By (\ref{rem:gaussian_beam_continuity_wrt_geodesic})
and continuity of the geodesic flow, it holds that
$\beta_j \to \beta_0 > 0$.
Hence $S_j \to 0$.

Next we use the assumption that $a_j = \lambda^{-\lambda^j}$.
Let $((y_j, \eta_j))_{j=1}^\infty \subset T\R^n$
and $(t_j)_{j=1}^\infty \subset (0, T)$
satisfy (\ref{eq:perturbed_hit}).
As $S_j \ne 0$ we have that $|S_j| = a_{k_j} \beta_j$
for some subsequence 
$(a_{k_j})_{j=1}^\infty \subset (a_j)_{j=1}^\infty$.
As $S_j \to 0$ we have that $a_{k_j} \to 0$.
Moreover, sequence $(\log_2 \beta_j)_{j=1}^\infty$ is bounded.
This boundedness together with 
$\log_\lambda a_{k_j} \in A$ and $\log_\lambda a_{k_j} \to -\infty$ 
yield 
\begin{equation*}
m_A(\log_\lambda a_{k_j} + \log_\lambda \beta_j) = \log_\lambda \beta_j
\end{equation*}
for large $j \in \N$.
Hence,
\begin{equation*}
\lim_{j \to \infty} m_A (\log_\lambda |S_j|) 
= \lim_{j \to \infty} \log_\lambda \beta_j
= \log_\lambda \beta_0.
\end{equation*}
\end{proof}

\begin{theorem}
\label{thm:determination_of_intersection_point}
If the coefficients of 
the pseudorandom noise $h_0$ are $a_j = \lambda^{-\lambda^j}$,
then the functions $S : S \Omega \times (0,\infty) \to \C$
and $\tau: \p_- SM \to (0, \infty]$ 
determine $D(\Sigma)$ and
\begin{equation*}
\gamma(\tau(x, \xi); x, \xi), \quad (x, \xi) \in D(\Sigma).
\end{equation*}
\end{theorem}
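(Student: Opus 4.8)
The plan is to combine Theorem \ref{thm:determination_of_tau}, which shows that $S$ determines $\tau$ on $\p_-SM$, with a boundary-point-recovery argument analogous to the one in Lemma \ref{lem:hits}. First I would fix $(x,\xi)\in \p_- SM$ with $\tau(x,\xi)<\infty$; by the last part of Theorem \ref{thm:determination_of_tau} there is a sequence $((t_j;y_j,\eta_j))_{j=1}^\infty\in J(x,\xi)$ with $(y_j,\eta_j)\to(x,\xi)$, $t_j\to\tau(x,\xi)$ and $S(y_j,\eta_j,t_j)\ne 0$. Since $S(y_j,\eta_j,t_j)\ne 0$, Lemma \ref{lem:testing_with_gaussian_beam} (as repackaged in Definition \ref{def:hit_functions}) forces $\gamma(t_j;y_j,\eta_j)=x_{k_j}$ for some source point $x_{k_j}$, and the index $k_j$ is exactly the one we need: the second half of Lemma \ref{lem:hits}, applied with $m_A\circ\log_\lambda$, lets us read off $\log_\lambda\beta_0 = \log_\lambda|u_0(\tau(x,\xi),\gamma(\tau(x,\xi));x,\xi)\,|g|^{1/2}(\gamma(\tau(x,\xi)))|$ purely from the values of $S$, hence from the data. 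But since $u_0$ is a known universal amplitude function (it depends only on the known exterior metric $g|_\Omega$ along the portion of $\gamma$ in $\Omega$ after exit, via \eqref{rem:gaussian_beam_continuity_wrt_geodesic}) and $|g|^{1/2}$ is known on $\p M\subset\overline\Omega$, this number pins down $\gamma(\tau(x,\xi);x,\xi)$ itself. Concretely, I would argue that for each $(x,\xi)$ the map $z\mapsto |u_0(\tau(x,\xi),z;x,\xi)\,|g|^{1/2}(z)|$, restricted to the boundary points $x_j$ lying near $\gamma(\tau(x,\xi);x,\xi)$, takes distinct values, so matching the recovered number to this known function identifies which $x_j$ is the exit point — and by density and continuity this recovers $\gamma(\tau(x,\xi);x,\xi)$ exactly, even when it is not itself one of the $x_j$.

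The domain statement $\D(\Sigma)=\{(x,\xi)\in\p_-SM;\ \tau(x,\xi)<\infty\}$ comes for free: Theorem \ref{thm:determination_of_tau} already gives $\tau$ on all of $\p_-SM$ from $S$, so the set where $\tau<\infty$ — which is exactly $\D(\Sigma)$ by definition — is determined. Thus the content of the present theorem is really the recovery of the exit point $\gamma(\tau(x,\xi);x,\xi)$, and the argument above reduces that to (i) the travel-time determination of Theorem \ref{thm:determination_of_tau} and (ii) the amplitude-factoring trick of Lemma \ref{lem:hits}, glued together by continuity of the geodesic flow in the initial data and by density of $(x_j)$ in $\p M$.

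The main obstacle I anticipate is making rigorous the passage from ``we know $\log_\lambda\beta_0$'' to ``we know the point $\gamma(\tau(x,\xi);x,\xi)$.'' A single real number $\beta_0$ cannot by itself locate a point on an $(n-1)$-dimensional boundary. The resolution has to be that we are free to vary $(x,\xi)$ and $t_0$ near the target: using Remark \ref{rem:transverse_intersection} we get a smooth family of geodesics transversally hitting $\p M$ near the exit point, and for each nearby exit point we obtain its own recovered value of $\beta$; together with the already-recovered function $\tau$ and the known exterior structure, these values identify the exit point as a function on $\p_-SM$ by a continuity/uniqueness argument. I would spend most of the write-up carefully setting up this family and verifying that the recovered data separates points — this is the step where the ``locality'' content of Remark \ref{rem:gaussian_beam_locality} and the smooth dependence \eqref{rem:gaussian_beam_continuity_wrt_geodesic} of $u_0$ on the geodesic are doing the real work. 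The remaining pieces are bookkeeping with the subsequences produced by density of $(x_j)$ and with lower semicontinuity of $\tau$ from Lemma \ref{lem:semicontinuity_of_tau}.
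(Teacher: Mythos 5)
Your setup is right up to the point where Lemma \ref{lem:hits} enters: determination of $D(\Sigma)$ from $\tau$ is immediate, and choosing $((t_j;y_j,\eta_j))_{j=1}^\infty\in J(x,\xi)$ with $t_j\to\tau(x,\xi)$ via Theorem \ref{thm:determination_of_tau}, so that $\gamma(t_j;y_j,\eta_j)=x_{k_j}$ for source points $x_{k_j}$, is exactly how the paper proceeds. The gap is in how you use the quantity that Lemma \ref{lem:hits} recovers. You claim $u_0$ is ``known'' because it only depends on $g|_\Omega$; this is false: the Gaussian beam propagates along the geodesic from $(y_j,\eta_j)\in S\Omega$ through the interior of $M$ before it reaches the hit point $x_{k_j}\in\p M$, so $\beta_j=|u_0(t_j,x_{k_j};y_j,\eta_j)|\,|g|^{1/2}(x_{k_j})$ depends on the unknown metric inside $M$ (this is precisely why the introduction calls $\beta$ an \emph{unknown} nonvanishing function, and why the modulo-$A$ trick of Lemma \ref{lem:hits} is needed to recover it from the data in the first place). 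Consequently your proposed identification step --- matching the recovered number against the values of $z\mapsto|u_0(\cdot,z;\cdot)|\,|g|^{1/2}(z)$ at nearby source points and arguing these values are distinct --- has no basis: that function is not known a priori, and even if it were, there is no reason it separates points of $\p M$. You acknowledge this obstacle yourself, but the ``vary $(x,\xi)$ and $t_0$ and use a continuity/uniqueness argument'' sketch does not close it.

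The missing idea is that $\beta_j$ is not used to locate the point at all; it is used to \emph{divide it out}. Lemma \ref{lem:hits}, applied at each $(y_j,\eta_j,t_j)$ with $S(y_j,\eta_j,t_j)\neq 0$, determines $\beta_j$ from $S$, and then $|S(y_j,\eta_j,t_j)|/\beta_j=a_{k_j}$. Since the coefficients $a_j=\lambda^{-\lambda^j}$ are pairwise distinct and the assignment $j\mapsto(a_j,x_j)$ is part of the explicitly constructed (hence known) source $h_0$, knowing $a_{k_j}$ determines the index $k_j$ and therefore the point $x_{k_j}$ itself. The exit point is then obtained as $\lim_{j\to\infty}x_{k_j}=\gamma(\tau(x,\xi);x,\xi)$, with no injectivity property of the amplitude ever required. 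Without this factoring step your argument does not go through, so as written the proposal has a genuine gap at its central step.
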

\begin{proof}
Clearly $\tau$ on $\p_- SM$ determines $D(\Sigma)$.
Let $(x, \xi) \in D(\Sigma)$. 
By Theorem \ref{thm:determination_of_tau} we may choose
$((t_j; y_j, \eta_j))_{j=1}^\infty \in J(x, \xi)$ such that
$\lim_{j \to \infty} t_j = \tau(x, \xi)$.
As $S(y_j, \eta_j, t_j) \ne 0$,
$\gamma(t_j; y_j, \eta_j) = x_{k_j}$ for
some subsequence $(x_{k_j})_{j=1}^\infty$ of the sequence of source points.
By Lemma \ref{lem:hits} the function $S$ determines 
\begin{equation*}
\frac{|S(y_j, \eta_j, t_j)|}{
|u_0(t_j, x_{k_j}; y_j, \eta_j) |g|^{1/2}(x_{k_j})|}
= a_{k_j}.
\end{equation*}
As $a_j$, $j \in \Z_+$, are disjoint, 
this determines the index $k_j$ and thus also the point $x_{k_j}$. 
Moreover
\begin{equation*}
\lim_{j \to \infty} x_{k_j} 
= \lim_{j \to \infty} \gamma(t_j; y_j, \eta_j)
= \gamma(\tau(x,\xi); x, \xi).
\end{equation*}
\end{proof}

The following result follows from Remark 2.

\begin{lemma}
\label{lem:tau_when_intersection_not_tangential}
Let us denote by $X$ either $S \Omega$ or $\p_- S M$.
Let $(y_0, \eta_0) \in X$ satisfy 
\begin{equation*}
\tau(y_0, \eta_0) < \infty, 
\quad \dot \gamma(\tau(y_0, \eta_0); y_0, \eta_0) \notin T_z \p M, 
\end{equation*}
where $z = \gamma(\tau(y_0, \eta_0); y_0, \eta_0)$.
Then there is a neighborhood $V \subset X$ of $(y_0, \eta_0)$
such that $\tau = \ell$ in $V$,
where $\ell : U \to I$ is the map of boundary intersection times defined 
in Remark \ref{rem:transverse_intersection}
for neighborhoods $U \subset X$ and $I \subset \R$ of 
$(y_0, \eta_0)$ and $\tau(y_0, \eta_0)$.
In particular, $\tau$ is smooth in $V$.
\end{lemma}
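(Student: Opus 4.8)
The plan is to apply Remark \ref{rem:transverse_intersection} at the point $(y_0,\eta_0)$ and the time $t_0:=\tau(y_0,\eta_0)$, and then to use the lower semicontinuity of $\tau$ (Lemma \ref{lem:semicontinuity_of_tau}) to force the first intersection times $\tau(y,\eta)$ of nearby geodesics to lie in the time window on which Remark \ref{rem:transverse_intersection} gives complete information, so that the trichotomy there identifies $\tau$ with $\ell$.

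First I would note that the hypothesis $\dot\gamma(t_0;y_0,\eta_0)\notin T_z\p M$ together with $z=\gamma(t_0;y_0,\eta_0)\in\p M$ means exactly that $(\gamma(t_0;y_0,\eta_0),\dot\gamma(t_0;y_0,\eta_0))\in\p_-SM$ or $\p_+SM$, so Remark \ref{rem:transverse_intersection} applies and yields neighborhoods $I\subset\R$ of $t_0$ and $U\subset S\R^n$ of $(y_0,\eta_0)$ and a smooth map $\ell:U\to I$ with the stated trichotomy. Evaluating that trichotomy at $(y_0,\eta_0)$ and $t=t_0$, and using that $M$ and $\Omega$ are open and hence disjoint from $\p M$ while $\gamma(t_0;y_0,\eta_0)\in\p M$, forces $\ell(y_0,\eta_0)=t_0$, which is positive since $\tau$ takes values in $(0,\infty]$. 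Replacing $U$ by a smaller neighborhood contained in $X$, I may assume $U\subset X$ and $\ell>0$ on $U$, and $\ell|_U$ is still smooth.

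Next I would prove the two inequalities $\tau\le\ell$ and $\tau\ge\ell$ near $(y_0,\eta_0)$. The first is immediate: for $(y,\eta)\in U$ the point $\gamma(\ell(y,\eta);y,\eta)$ lies on $\p M$ and $\ell(y,\eta)>0$, so $\ell(y,\eta)$ is a positive intersection time and the infimum defining $\tau$ gives $\tau(y,\eta)\le\ell(y,\eta)$. For the second inequality I would combine this bound with the continuity of $\ell$, which gives $\limsup_{(y,\eta)\to(y_0,\eta_0)}\tau(y,\eta)\le t_0$, and with Lemma \ref{lem:semicontinuity_of_tau}, which gives $\liminf_{(y,\eta)\to(y_0,\eta_0)}\tau(y,\eta)\ge\tau(y_0,\eta_0)=t_0$; hence $\tau(y,\eta)\to t_0$ as $(y,\eta)\to(y_0,\eta_0)$ in $X$, so there is a neighborhood $V\subset U$ of $(y_0,\eta_0)$ on which $\tau(y,\eta)\in I$. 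For $(y,\eta)\in V$ we then have $\gamma(\tau(y,\eta);y,\eta)\in\p M$ with $\tau(y,\eta)\in I$ and $(y,\eta)\in U$, so the trichotomy of Remark \ref{rem:transverse_intersection}, again using that $M$ and $\Omega$ miss $\p M$, leaves only the possibility $\tau(y,\eta)=\ell(y,\eta)$. Since $\ell$ is smooth, $\tau=\ell|_V$ is smooth on $V$, as claimed.

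The step I expect to be the main obstacle is the inequality $\tau\ge\ell$: a priori a geodesic close to $\gamma(\cdot;y_0,\eta_0)$ might cross $\p M$ at some time well below $t_0$ — in the case $X=\p_-SM$ near the starting boundary point, or in the case $X=S\Omega$ near $t=0$ — and Remark \ref{rem:transverse_intersection} alone says nothing about such times. Lemma \ref{lem:semicontinuity_of_tau} is precisely what excludes this. If one preferred not to invoke semicontinuity, one could instead argue by compactness: the arc $\gamma(\cdot;y_0,\eta_0)$ restricted to $[\delta_1,t_0-\delta]$ lies in the open set $\R^n\setminus\overline M$ (when $X=S\Omega$) or in $M$ (when $X=\p_-SM$), at positive distance from $\p M$, so by continuous dependence of geodesics on their initial data nearby geodesics also avoid $\p M$ on that interval, while the short remaining intervals near $0$ and near $t_0$ are controlled by applying Remark \ref{rem:transverse_intersection} at $t=0$ (using $(y_0,\eta_0)\in\p_-SM$) and at $t=t_0$; this route is more cumbersome and must treat the two choices of $X$ separately.
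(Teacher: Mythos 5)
Your proof is correct and is essentially the paper's argument: the paper offers no written proof beyond the assertion that the lemma ``follows from Remark \ref{rem:transverse_intersection}'', and your elaboration is the natural completion of exactly that route. The one genuine addition you make explicit --- invoking Lemma \ref{lem:semicontinuity_of_tau} (lower semicontinuity of $\tau$, available since it is proved earlier) together with the bound $\tau\le\ell$ to force $\tau(y,\eta)$ into the interval $I$ and thereby rule out earlier boundary crossings --- is precisely the detail the paper leaves implicit, and it is handled correctly.
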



\begin{lemma}
\label{lem:parametric_transversality}
The set of $(x, \xi)$ such that $\gamma(\cdot; x, \xi)$ 
is transverse to $\p M$ is open and dense in 
\begin{equation*}
\p SM := \{(x, \xi) \in SM;\ x \in \p M\}.
\end{equation*}
\end{lemma}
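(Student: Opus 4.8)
The plan is to reformulate transversality of the geodesic $\gamma(\cdot;x,\xi)$ to $\p M$ as the statement that the orbit of $(x,\xi)$ under the geodesic flow avoids a fixed submanifold, and then to run a parametric transversality (Sard) argument. Since $g$ is uniformly elliptic, $(\R^n,g)$ is complete and the geodesic flow $\phi_t:S\R^n\to S\R^n$ is defined for all $t\in\R$, where $S\R^n$ is the unit sphere bundle, a manifold of dimension $2n-1$. As $\p M$ is a compact embedded $C^\infty$ hypersurface, its unit tangent bundle $\mathcal G:=\{(y,v)\in S\R^n;\ y\in\p M,\ v\in T_y\p M\}$ is a compact embedded submanifold of $S\R^n$ of dimension $(n-1)+(n-2)=2n-3$, i.e.\ of codimension $2$. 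The geodesic $\gamma(\cdot;x,\xi)$ meets $\p M$ tangentially at time $t$ exactly when $\phi_t(x,\xi)\in\mathcal G$; hence $(x,\xi)\in\p SM$ lies in the asserted set of transverse directions if and only if $\{\phi_t(x,\xi);\ t\in\R\}\cap\mathcal G=\emptyset$. A tangency at $t=0$ is the condition $\xi\in T_x\p M$, and the set $\mathcal N:=\{(x,\xi)\in\p SM;\ (\nu(x),\xi)_g=0\}$ of such grazing directions is a smooth compact hypersurface of $\p SM$, in particular closed and nowhere dense, so from the start we include $\mathcal N$ in the bad set.

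The key technical step would be to show that the flow map $\Psi:\R\times(\p SM\setminus\mathcal N)\to S\R^n$, $\Psi(t,x,\xi):=\phi_t(x,\xi)$, is a submersion. In the horizontal--vertical splitting of $T(S\R^n)$ the geodesic spray $X$ has horizontal component equal to $\xi$ at $(x,\xi)$ and zero vertical component, while $T_{(x,\xi)}(\p SM)$ consists of the horizontal lifts of $T_x\p M$ together with all vertical vectors; thus $X(x,\xi)\notin T_{(x,\xi)}(\p SM)$ precisely when $\xi\notin T_x\p M$, i.e.\ exactly off $\mathcal N$. Since the image of $d\Psi_{(t,x,\xi)}$ equals $d\phi_t\big(\R X(x,\xi)+T_{(x,\xi)}(\p SM)\big)$ and $\phi_t$ is a diffeomorphism, this image is all of $T_{\phi_t(x,\xi)}(S\R^n)$ whenever $(x,\xi)\notin\mathcal N$. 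In particular $\Psi$ is transverse to $\mathcal G$, so $\Psi^{-1}(\mathcal G)$ is a submanifold of $\R\times(\p SM\setminus\mathcal N)$ of codimension $2$, hence of dimension $2n-3$. Its image under the projection $\R\times\p SM\to\p SM$ is then the smooth image of a $(2n-3)$-dimensional manifold in the $(2n-2)$-dimensional manifold $\p SM$, so it has measure zero (the domain is $\sigma$-compact since $\p M$ is compact). Therefore the bad set is contained in $\mathcal N$ together with this null set, which gives density of the transverse directions.

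For openness, and for locating the delicate point, note that a transverse intersection of a curve with the hypersurface $\p M$ is isolated and persists under $C^1$-small perturbations of the curve. Writing $B_N\subset\p SM$ for the set of $(x,\xi)$ whose geodesic is tangent to $\p M$ at some $t\in[-N,N]$, continuity of $\phi_t$, closedness of $\mathcal G$ and compactness of $[-N,N]$ show each $B_N$ is closed, and by the measure-zero bound above it has empty interior; moreover $\mathcal N\subset B_0\subset B_N$. Hence, on any fixed compact range of $t$, the transverse directions form the complement of $B_N$, an open dense subset of $\p SM$. The one point that requires care is the passage to the full time line: the projection of $\Psi^{-1}(\mathcal G)$ to $\p SM$ need not be closed, because two nearby directions may develop tangencies only at very different and arbitrarily large times, so establishing openness over all of $\R$ requires in addition an a priori bound on the times at which the relevant geodesics can meet $\p M$. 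Such a bound is available in the situation at hand — the geodesic arcs entering the scattering relation $\Sigma_{M,g}$ have length at most $\diam(M)$ — so it suffices to restrict $t$ to a fixed compact interval containing all intersection times of interest, on which the bad set is $\bigcup_N B_N$, a countable union of closed nowhere-dense sets whose complement is open and dense in $\p SM$. The remaining verifications — that $\mathcal G$ and $\mathcal N$ are the claimed submanifolds, smoothness of $\Psi$, and that a smooth image of a $(2n-3)$-manifold in a $(2n-2)$-manifold is null — are routine.
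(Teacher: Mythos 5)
Your density argument is correct and, in substance, the same as the paper's: the paper first discards the grazing directions by passing to $\p_-SM\cup\p_+SM$ (your set $\mathcal N$ is exactly the complement of this in $\p SM$) and then applies the parametric transversality theorem of Hirsch to the evaluation map $F^{ev}(x,\xi,t)=\gamma(t;x,\xi)$ on $\p_\pm SM\times\R$. Your computation with the geodesic spray, showing that $\Psi(t,x,\xi)=\phi_t(x,\xi)$ is a submersion off $\mathcal N$, is precisely a verification of the hypothesis (transversality of the evaluation map to $\p M$) that the paper asserts without proof, and your codimension-two preimage plus measure-zero projection step is the standard proof of the parametric transversality conclusion. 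So for the density half you have a correct, essentially self-contained version of the paper's argument.

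The openness half, however, contains a genuine gap. The a priori bound you invoke is false: $\diam(M)$ is defined through the distance function $d_{\possiblebar M}$, an infimum of path lengths, so a boundary-to-boundary geodesic chord of a non-simple manifold (exactly the situation the paper is concerned with) can be much longer than $\diam(M)$; moreover the lemma asks for transversality of $\gamma(\cdot;x,\xi)$ for all $t\in\R$, including re-entries into $\p M$ after the geodesic has left $\bar M$, and nothing in the hypotheses bounds those intersection times. Second, even granting a restriction of $t$, your concluding sentence does not give openness: the complement of $\bigcup_N B_N$, a countable union of closed nowhere dense sets, is dense by Baire but need not be open; openness only follows from a single closed set $B_N$, i.e. from fixing one compact time interval, which proves a weaker statement than the lemma (transversality only on $[-N,N]$). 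As written you have therefore established density of the transverse directions, and openness only of the compact-time version, but not openness of the set in the lemma; to close this you would need either to argue as the paper does, via the cited transversality theorem applied to $F^{ev}:\p_\pm SM\times\R\to\R^n$, or to supply a genuine control of tangency times at infinity in place of the incorrect $\diam(M)$ bound.
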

\begin{proof}
As $\p_- SM \cup \p_+ SM$ is open and dense in $\p SM$, it is enough to show that 
the set of $(x, \xi)$ such that $\gamma(\cdot; x, \xi)$ 
is transverse to $\p M$ is open and dense in $\p_\pm SM$.
By the parametric transversality theorem, see \cite[Thm. 3.2.7]{Hirsch}, 
the claim follows from the fact that the evaluation map 
\begin{align*}
F^{ev} &: \p_\pm SM \times \R \to \R^n
\\F^{ev} &: (x, \xi, t) \mapsto \gamma(t; x, \xi)
\end{align*}
is transverse to $\p M$.
\end{proof}

\begin{lemma}
\label{lem:transversal_perturbation_and_tau}
Let $(x_0, \xi_0) \in D(\Sigma)$.
Then there is a sequence $( (x_j, \xi_j))_{j = 1}^\infty \subset D(\Sigma)$ such that
$\gamma(\cdot; x_j, \xi_j)$ is transverse to $\p M$ and 
\begin{equation*}
\lim_{j \to \infty} (x_j, \xi_j) = (x_0, \xi_0), 
\quad \lim_{j \to \infty} \tau(x_j, \xi_j)  = \tau(x_0, \xi_0).
\end{equation*}
\end{lemma}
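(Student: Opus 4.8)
The plan is to approximate $(x_0,\xi_0)$ by directions whose geodesics hit $\p M$ transversally, while controlling the first intersection time. First I would invoke Lemma~\ref{lem:parametric_transversality}: the set $\T$ of $(x,\xi)\in\p SM$ such that $\gamma(\cdot;x,\xi)$ is transverse to $\p M$ is open and dense in $\p SM$, hence dense in $\p_- SM$. Since $(x_0,\xi_0)\in D(\Sigma)\subset\p_- SM$, I can pick a sequence $(x_j,\xi_j)\in\T\cap\p_- SM$ with $(x_j,\xi_j)\to(x_0,\xi_0)$. The issue is that a priori these need not lie in $D(\Sigma)$ (i.e.\ $\tau(x_j,\xi_j)$ could be $+\infty$), and even if they do, $\tau(x_j,\xi_j)$ need only satisfy $\liminf_j\tau(x_j,\xi_j)\ge\tau(x_0,\xi_0)$ by lower semicontinuity (Lemma~\ref{lem:semicontinuity_of_tau}); I still need the matching upper bound.

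To get the upper bound and the membership in $D(\Sigma)$ simultaneously, the key step is to use Remark~\ref{rem:transverse_intersection} at the point $z_0:=\gamma(\tau(x_0,\xi_0);x_0,\xi_0)$. Set $t_0:=\tau(x_0,\xi_0)$, $z_0:=\gamma(t_0;x_0,\xi_0)$, $\zeta_0:=\dot\gamma(t_0;x_0,\xi_0)$, so that $(z_0,-\zeta_0)\in\p_+ SM$ and $(x_0,\xi_0)=(\gamma(t_0;z_0,-\zeta_0),-\dot\gamma(t_0;z_0,-\zeta_0))$. Remark~\ref{rem:transverse_intersection} then furnishes neighborhoods $I\ni t_0$, $U\ni(z_0,-\zeta_0)$ in $S\R^n$ and a smooth map $\ell:U\to I$ with $\gamma(t;z,\zeta)\in M$ for $t<\ell(z,\zeta)$, $\gamma(\ell(z,\zeta);z,\zeta)\in\p M$, and $\gamma(t;z,\zeta)\in\Omega$ for $\ell(z,\zeta)<t$, $t\in I$. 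Now for $(z,\zeta)\in U$ close to $(z_0,-\zeta_0)$, the backward geodesic direction $(\gamma(t_0;z,\zeta),-\dot\gamma(t_0;z,\zeta))$ lies near $(x_0,\xi_0)$ and its first intersection time with $\p M$ is exactly $t_0-\ell(z,\zeta)$ provided the geodesic stays in $M$ on $(0,t_0-\ell)$; continuity of $\ell$ and of the geodesic flow makes $t_0-\ell(z,\zeta)\to t_0$. The remaining point is that I want these perturbed directions to be transverse; since transversality at the intersection point $z$ is an open condition and $(z_0,-\zeta_0)$ can itself be perturbed to a transverse direction (again by density from Lemma~\ref{lem:parametric_transversality} applied near $(z_0,-\zeta_0)\in\p_+ SM$), I choose $(z_j,\zeta_j)\to(z_0,-\zeta_0)$ in $U$ with $\gamma(\cdot;z_j,\zeta_j)$ transverse to $\p M$, and define
\begin{equation*}
(x_j,\xi_j):=(\gamma(t_0;z_j,\zeta_j),-\dot\gamma(t_0;z_j,\zeta_j)).
\end{equation*}
Then $\gamma(\cdot;x_j,\xi_j)$ is just the time-reversal of $\gamma(\cdot;z_j,\zeta_j)$ up to time $t_0$, hence transverse to $\p M$; $(x_j,\xi_j)\to(x_0,\xi_0)$; and $\tau(x_j,\xi_j)=t_0-\ell(z_j,\zeta_j)<\infty$ so $(x_j,\xi_j)\in D(\Sigma)$, with $\tau(x_j,\xi_j)\to t_0=\tau(x_0,\xi_0)$.

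The main obstacle is the bookkeeping in the previous paragraph: ensuring that the perturbed backward geodesic really does have its \emph{first} $\p M$-intersection at time $t_0-\ell(z_j,\zeta_j)$, i.e.\ that it does not exit $M$ earlier. This follows because the original geodesic $\gamma(\cdot;x_0,\xi_0)$ stays in $M$ on $(0,t_0)$ with $\gamma((0,t_0);x_0,\xi_0)\subset M^{int}$ a compact subset at positive distance from $\p M$ on any $[\delta,t_0-\delta]$, so by continuous dependence of the geodesic flow the perturbed geodesic also stays in $M^{int}$ on such intervals for $j$ large; near $t=0$ and near $t=t_0$ the transversality (the condition $\mp(\nu,\xi_0)_g>0$ from $(x_0,\xi_0)\in\p_- SM$ and $(z_0,-\zeta_0)\in\p_+ SM$) forces the geodesic to enter $M$ immediately at $t=0$ and to stay in $M$ up to $\ell(z_j,\zeta_j)$, exactly as encoded in Remark~\ref{rem:transverse_intersection}. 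Combining these, $\tau(x_j,\xi_j)=t_0-\ell(z_j,\zeta_j)$ for large $j$, which completes the argument.
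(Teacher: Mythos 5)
Your overall strategy, namely reversing the geodesic from its exit point, perturbing to a transverse direction via the density in Lemma~\ref{lem:parametric_transversality}, tracking the boundary crossing near the entry point with the map $\ell$ from Remark~\ref{rem:transverse_intersection}, and closing with the lower semicontinuity of Lemma~\ref{lem:semicontinuity_of_tau}, is the same as the paper's. However, your construction of $(x_j,\xi_j)$ has a genuine flaw: you flow back by the \emph{fixed} time $t_0$, setting $(x_j,\xi_j):=(\gamma(t_0;z_j,\zeta_j),-\dot\gamma(t_0;z_j,\zeta_j))$. Since in general $\ell(z_j,\zeta_j)\neq t_0$, the point $x_j=\gamma(t_0;z_j,\zeta_j)$ lies slightly inside $M$ or slightly in $\Omega$, but not on $\p M$; hence $(x_j,\xi_j)\notin\p_-SM$, and finiteness of a hitting time cannot place it in $D(\Sigma)$, which by definition is a subset of $\p_-SM$. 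Your own bookkeeping also betrays the problem: with your $\ell:U\to I$ and $I$ a neighborhood of $t_0$, continuity gives $\ell(z_j,\zeta_j)\to\ell(z_0,-\zeta_0)=t_0$, so your formula $\tau(x_j,\xi_j)=t_0-\ell(z_j,\zeta_j)$ tends to $0$ (and can even be negative), not to $t_0$ as you conclude; moreover, when $\ell(z_j,\zeta_j)<t_0$ the geodesic lies in $\Omega$, not in $M$, on $(0,t_0-\ell)$, so the proviso in your key sentence is also off. As written, the sequence neither lies in $D(\Sigma)$ nor has the required limit of first intersection times.

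The repair is exactly to replace the fixed time $t_0$ by the perturbed crossing time: set $t_j:=\ell(z_j,\zeta_j)$ and $(x_j,\xi_j):=(\gamma(t_j;z_j,\zeta_j),-\dot\gamma(t_j;z_j,\zeta_j))$, which is the paper's choice. Then $x_j\in\p M$ by construction, $(x_j,\xi_j)\to(x_0,\xi_0)$, so $(x_j,\xi_j)\in\p_-SM$ for large $j$ (an open condition); since $\gamma(t_j;x_j,\xi_j)=z_j\in\p M$ one gets $\tau(x_j,\xi_j)\le t_j<\infty$, hence $(x_j,\xi_j)\in D(\Sigma)$ and $\limsup_{j}\tau(x_j,\xi_j)\le t_0$, while Lemma~\ref{lem:semicontinuity_of_tau} gives $\liminf_{j}\tau(x_j,\xi_j)\ge\tau(x_0,\xi_0)$. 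Note that only the inequality $\tau(x_j,\xi_j)\le t_j$ is needed, which disposes of your worry about possible earlier boundary hits. A smaller slip: you assert $(z_0,-\zeta_0)\in\p_+SM$, but the exit at $z_0$ may be tangential (this is precisely why $\Sigma$ maps into $\overline{\p_+SM}$); fortunately you only need $(z_0,-\zeta_0)\in\p SM$ for the density statement of Lemma~\ref{lem:parametric_transversality}, and Remark~\ref{rem:transverse_intersection} is applied at the crossing near $x_0$, where $(x_0,-\xi_0)$ is genuinely non-tangential because $(x_0,\xi_0)\in\p_-SM$.
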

\begin{proof}
We denote $\tau_0 := \tau(x_0, \xi_0)$ and
\begin{equation*}
(z_0, \zeta_0) := (\gamma(\tau_0; x_0, \xi_0), -\dot \gamma(\tau_0; x_0, \xi_0)).
\end{equation*}
Remark \ref{rem:transverse_intersection}
gives a map of boundary intersection times $\ell : U \to I$
for neighborhoods $U \subset S\R^n$ and $I \subset \R$ of 
$(z_0, \zeta_0)$ and $\tau_0$.
By Lemma \ref{lem:parametric_transversality} there is a sequence 
$( (z_j, \zeta_j))_{j=1}^\infty \subset SM \cap U$ converging to $(z_0, \zeta_0)$ such
that $\gamma(\cdot; z_j, \zeta_j)$ is transverse to $\p M$.

We define $t_j := \ell(z_j, \zeta_j)$ and 
\begin{equation*}
(x_j, \xi_j) := (\gamma(t_j; z_j, \zeta_j), - \dot \gamma(t_j; z_j, \zeta_j)).
\end{equation*}
Then $(x_j, \xi_j) \to (x_0, \xi_0)$ as $j \to \infty$.
In particular, there is $J \ge 1$ such that $(x_j, \xi_j) \in \p_- SM$ for 
$j \ge J$. By Lemma \ref{lem:semicontinuity_of_tau}
\begin{align*}
\tau(x_0, \xi_0) 
&\le \liminf_{j \to \infty} \tau(x_j, \xi_j) \le \limsup_{j \to \infty} \tau(x_j, \xi_j) 
\\&\le \lim_{j \to \infty} \ell(z_j, \zeta_j) = \ell(z_0, \zeta_0) = \tau(x_0, \xi_0).
\end{align*}
\end{proof}

\begin{lemma}
\label{lem:existence_of_variation_for_intersection_direction}
Let $(x_0, \xi_0) \in D(\Sigma)$ be such that $\gamma(\cdot; x_0, \xi_0)$
is transverse to $\p M$.
Then there is $(y_0, \eta_0) \in S \Omega$ lying on the geodesic $\gamma(\cdot; x_0, \xi_0)$
and a neighborhood $V \subset S_{y_0} \Omega$ of $\eta_0$ such that the following 
conditions hold.
\begin{itemize}
\item[(C1)] The map $\eta \mapsto \tau(y_0, \eta)$ is smooth $V \to (0, \infty)$.
\item[(C2)] The map 
\begin{equation}
\label{eq:variation_xeta}
(x(\eta), \xi(\eta)) := (\gamma(\tau(y_0, \eta); y_0, \eta), \dot \gamma(\tau(y_0, \eta); y_0, \eta))
\end{equation}
is smooth $V \to D(\Sigma)$ and $(x(\eta_0), \xi(\eta_0)) = (x_0, \xi_0)$.
\item[(C3)] The map
\begin{equation}
\label{eq:variation_teta}
\tilde \ell(\eta) := \tau(x(\eta), \xi(\eta)) + \tau(y_0, \eta)
\end{equation}
is smooth $V \to (0, \infty)$.
\item[(C4)] There is a neighborhood $W \subset \p M$ of 
$\gamma(\tau(x_0, \xi_0); x_0, \xi_0)$ such that
\begin{equation} \label{eq:variation_diffeo}
\eta \mapsto \gamma(\tau(x(\eta), \xi(\eta)); x(\eta), \xi(\eta))
\end{equation}
is a diffeomorphism $V \to W$.
\end{itemize}
\end{lemma}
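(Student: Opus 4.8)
The plan is to push the data $(x_0,\xi_0)$ a little way back along its geodesic onto the $\Omega$-side of $\p M$, and then to verify (C1)--(C4) using Lemma~\ref{lem:tau_when_intersection_not_tangential} and Remark~\ref{rem:transverse_intersection}. Write $\gamma_0:=\gamma(\cdot;x_0,\xi_0)$, $\tau_0:=\tau(x_0,\xi_0)$ and $z:=\gamma_0(\tau_0)$. Since $(x_0,\xi_0)\in\p_- SM$, the geodesic $\gamma_0$ enters $M$ at $x_0$, i.e.\ $\gamma_0(t)\in M$ for all small $t>0$; as $\gamma_0$ is transverse to $\p M$ at $x_0$ it then leaves $\bar M$ for small $t<0$, so $\gamma_0(-s)\in\Omega^{int}$ for all sufficiently small $s>0$. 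The conjugate points of $z$ along $\gamma_0$ are isolated, so I can fix $s_0>0$ with $y_0:=\gamma_0(-s_0)\in\Omega^{int}$ and with $y_0$ not conjugate to $z$ along the segment $\gamma_0|_{[-s_0,\tau_0]}$. Put $\eta_0:=\dot\gamma_0(-s_0)\in S_{y_0}\Omega$, so that $\gamma(t;y_0,\eta_0)=\gamma_0(t-s_0)$; then $\gamma(\cdot;y_0,\eta_0)$ stays outside $\bar M$ on $(0,s_0)$ and first meets $\p M$ at $t=s_0$, with $\gamma(s_0;y_0,\eta_0)=x_0$ and $\dot\gamma(s_0;y_0,\eta_0)=\xi_0$; hence $\tau(y_0,\eta_0)=s_0$.

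For (C1) I apply Lemma~\ref{lem:tau_when_intersection_not_tangential} with $X=S\Omega$: we have $\tau(y_0,\eta_0)=s_0<\infty$ and $\dot\gamma(\tau(y_0,\eta_0);y_0,\eta_0)=\xi_0\notin T_{x_0}\p M$ by transversality, so $\tau$ is smooth near $(y_0,\eta_0)$ in $S\Omega$; restricting to the fibre over $y_0$ gives a neighbourhood $V\subset S_{y_0}\Omega$ of $\eta_0$ on which $\eta\mapsto\tau(y_0,\eta)$ is smooth and positive. By smoothness of the geodesic flow the map $(x(\eta),\xi(\eta))$ in (\ref{eq:variation_xeta}) is then smooth on $V$ with $(x(\eta_0),\xi(\eta_0))=(x_0,\xi_0)$. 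Since $x(\eta)\in\p M$ for all $\eta$ and $\p_- SM$ is open in $\{(x,\xi)\in SM:x\in\p M\}$, shrinking $V$ gives $(x(\eta),\xi(\eta))\in\p_- SM$ for $\eta\in V$. Applying Lemma~\ref{lem:tau_when_intersection_not_tangential} once more, now with $X=\p_- SM$ at $(x_0,\xi_0)$ and using that $\gamma_0$ is transverse to $\p M$ at $z$ (so $\dot\gamma_0(\tau_0)\notin T_z\p M$), we find that $\tau$ is smooth and finite on a neighbourhood of $(x_0,\xi_0)$ in $\p_- SM$; shrinking $V$ so that $(x(\eta),\xi(\eta))$ lands there yields (C2), and then $\tilde\ell(\eta)=\tau(x(\eta),\xi(\eta))+\tau(y_0,\eta)$ is smooth on $V$, which is (C3).

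The substantive step will be (C4). Note that, by the group property of the geodesic flow, $\gamma(\tau(x(\eta),\xi(\eta));x(\eta),\xi(\eta))=\gamma(\tilde\ell(\eta);y_0,\eta)=:\Psi(\eta)$, that $\Psi(V)\subset\p M$, and that $\Psi(\eta_0)=\gamma_0(\tau_0)=z$. Set $G(\eta,t):=\gamma(t;y_0,\eta)$ and $T_1:=\tilde\ell(\eta_0)=\tau_0+s_0$, so $G(\eta_0,T_1)=z$. Writing $\exp_{y_0}$ for the Riemannian exponential at $y_0$, we have $G(\eta,t)=\exp_{y_0}(t\eta)$ for $\|\eta\|_g=1$, and since $y_0$ was chosen not conjugate to $z=\exp_{y_0}(T_1\eta_0)$ along $\gamma_0|_{[-s_0,\tau_0]}$, the differential $dG(\eta_0,T_1):T_{\eta_0}S_{y_0}\R^n\oplus\R\to T_z\R^n$ is a linear isomorphism. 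Its $t$-component is $\p_t G(\eta_0,T_1)=\dot\gamma_0(\tau_0)=:v_z$, and transversality of $\gamma_0$ at $z$ gives $v_z\notin T_z\p M$, so $T_z\R^n=T_z\p M\oplus\R v_z$; let $\pi$ be the projection onto $T_z\p M$ along $v_z$. Differentiating $\Psi=G(\cdot,\tilde\ell(\cdot))$ at $\eta_0$ gives, for $w\in T_{\eta_0}S_{y_0}\Omega$,
\begin{equation*}
d\Psi(\eta_0)w=\p_\eta G(\eta_0,T_1)w+\big(d\tilde\ell(\eta_0)w\big)\,v_z ,
\end{equation*}
and since $d\Psi(\eta_0)w\in T_z\p M$, applying $\pi$ yields $d\Psi(\eta_0)w=\pi\,\p_\eta G(\eta_0,T_1)w$. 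Now $\p_\eta G(\eta_0,T_1)$ is injective, being the restriction of the isomorphism $dG(\eta_0,T_1)$ to the hyperplane $T_{\eta_0}S_{y_0}\R^n\oplus 0$, and its image, an $(n-1)$-dimensional subspace, does not contain $v_z=dG(\eta_0,T_1)(0,1)$ (again by injectivity of $dG(\eta_0,T_1)$), hence is a complement of $\R v_z$ in $T_z\R^n$, so $\pi$ maps it isomorphically onto $T_z\p M$. Therefore $d\Psi(\eta_0)=\pi\,\p_\eta G(\eta_0,T_1)$ is an isomorphism $T_{\eta_0}S_{y_0}\Omega\to T_z\p M$, and the inverse function theorem makes $\Psi$ a diffeomorphism from a final shrinking of $V$ onto an open neighbourhood $W:=\Psi(V)\subset\p M$ of $z=\gamma(\tau(x_0,\xi_0);x_0,\xi_0)$, which is (C4).

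The only genuinely delicate point is this last one: everything in (C4) collapses to invertibility of $d\Psi(\eta_0)$, and invertibility forces the non-conjugacy of $y_0$ and $z$ — which is exactly why the lemma permits us to move $(y_0,\eta_0)$ along the geodesic into $\Omega$ rather than working at $(x_0,\xi_0)$, since isolatedness of the conjugate locus of $z$ along $\gamma_0$ then leaves us free to choose a suitable $s_0$. The remaining ingredients — transversality of the boundary crossings so that $v_z\notin T_z\p M$, smoothness and finiteness of $\tau$ near $(y_0,\eta_0)$ and near $(x_0,\xi_0)$ via Lemma~\ref{lem:tau_when_intersection_not_tangential}, and taking $V$ to be the intersection of the finitely many neighbourhoods produced above — are routine.
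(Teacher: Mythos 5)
Your proof is correct and follows essentially the same route as the paper: pull $(x_0,\xi_0)$ back along the geodesic to a point $y_0\in\Omega$ that is not conjugate to $z_0=\gamma(\tau(x_0,\xi_0);x_0,\xi_0)$, obtain (C1)--(C3) from Lemma \ref{lem:tau_when_intersection_not_tangential} together with smoothness of the geodesic flow, and derive (C4) from non-conjugacy plus transversality. The only cosmetic difference is in (C4), where you invert $d\Psi(\eta_0)$ directly via the inverse function theorem, whereas the paper equivalently maps the graph of $\tilde\ell$ by the polar diffeomorphism $(t,\eta)\mapsto\gamma(t;y_0,\eta)$ onto a neighborhood of $z_0$ in $\p M$.
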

\begin{proof}
We denote $\gamma(t) := \gamma(t; x_0, \xi_0)$ and 
$z_0 := \gamma(\tau(x_0, \xi_0))$.
By remark \ref{rem:transverse_intersection} $\gamma(-t) \in \Omega$
for small $t > 0$.
Moreover, the points that are  conjugate to $z_0$ along $\gamma$
are discrete on $\gamma$, see e.g. \cite{Jost}.

Thus there is $\tau_0 > 0$ such that 
\begin{equation*}
(y_0, \eta_0) := (\gamma(-\tau_0), \dot \gamma(-\tau_0))
\end{equation*}
is in $S \Omega$, $y_0$ is not conjugate to $z_0$ along $\gamma$,
$\tau(y_0,\eta_0) = \tau_0$ and 
\begin{equation*}
(\gamma(\tau_0; y_0, \eta_0), \dot \gamma(\tau_0; y_0, \eta_0)) = (x_0, \xi_0).
\end{equation*}

By Lemma \ref{lem:tau_when_intersection_not_tangential} there is
a neighborhood $V_0 \subset S_{y_0} \Omega$ of $\eta_0$ such that $\eta \mapsto \tau(y_0, \eta)$
is smooth in $V_0$.
Hence the function $\eta \mapsto (x(\eta), \xi(\eta))$ maps $\eta_0$ to $(x_0, \xi_0)$ and
is smooth in $V_0$.
Moreover, this smoothess, tranversality of $\gamma(\cdot, x_0, \xi_0)$
and Lemma \ref{lem:tau_when_intersection_not_tangential} 
imply that there is a neighborhood
$V_1 \subset V_0$ of $\eta_0$ such that $(x(\eta), \xi(\eta)) \in \p_- SM$
and $\eta \mapsto \tau(x(\eta), \xi(\eta))$ is smooth $V_1 \to (0,\infty)$.
In particular, $(x(\eta), \xi(\eta)) \in D(\Sigma)$ for all $\eta \in V_1$.
We have shown that $(y_0, \eta_0)$ and $V_1$ satisfy (C1)-(C3).

We have
\begin{equation}
\label{eq:group_property_across_boundary}
(\gamma(s; y_0, \eta), \dot \gamma(s; y_0, \eta))|_{s = t + \tau(y_0, \eta)} = 
(\gamma(t; x(\eta), \xi(\eta)), \dot \gamma(t; x(\eta), \xi(\eta))).
\end{equation}
In particular, $\gamma(\tilde \ell(\eta_0); y_0, \eta_0) = z_0$
and
\begin{equation*}
\gamma(\tilde \ell(\eta); y_0, \eta) = \gamma(\tau(x(\eta), \xi(\eta)); x(\eta), \xi(\eta)) \in \p M.
\end{equation*}

Moreover, as $y_0$ is not conjugate to $z_0$ along $\gamma$,
there are neighborhoods $V_2 \subset V_1$, $I_0 \subset (0, \infty)$ and $U_0 \subset \R^n$ of $\eta_0$, $\tilde \ell(\eta_0)$ and $z_0$
such that $(t, \eta) \mapsto \gamma(t; y_0, \eta)$
is a diffeomorphism $V_2 \times I_0 \to U_0$.

There is a neighborhood $V \subset V_2$ of $\eta_0$
such that $\tilde \ell(V) \subset I_0$.
The graph of $\eta \mapsto \tilde \ell(\eta)$ is an $(n-1)$ dimensional submanifold on $V \times I_0$.
Hence the diffeomorphism $(t, \eta) \mapsto \gamma(t; y_0, \eta)$
maps it onto a $(n-1)$ dimensional submanifold $W$ of $U_0$.
Moreover, $z_0 \in W$ and $W \subset \p M$. Thus $W$ is a neighborhood of $z_0$ in $\p M$.
\end{proof}

\begin{lemma}
\label{lem:variation_for_intersection_direction}
Let $(x_0, \xi_0) \in D(\Sigma)$ and $(y_0, \eta_0) \in S \Omega$
satisfy conditions (C1)-(C4) of Lemma \ref{lem:existence_of_variation_for_intersection_direction}
for neighborhoods $V \subset S_{y_0} \Omega$ and $W \subset \p M$
of $\eta_0$ and $z_0 := \gamma(\tau(x_0, \xi_0); x_0, \xi_0)$.
We denote by $F : W \to V$ the inverse map of (\ref{eq:variation_diffeo}).
Then 
\begin{equation}
\label{eq:tangential_component}
\grad_{\p M} (\tilde \ell \circ F)|_{z = z_0} = \dot \gamma_{z_0}^\top,
\end{equation}
where $\tilde \ell : V \to (0, \infty)$ is the function (\ref{eq:variation_teta})
and $\dot \gamma_{z_0}^\top$ is the orthogonal projection of $\dot \gamma(\tau(x_0, \xi_0); x_0, \xi_0)$
into $T_{z_0} \p M$.
\end{lemma}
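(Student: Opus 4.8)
The plan is to identify $\tilde\ell\circ F$ with the length function of a smooth family of unit-speed geodesics emanating from the \emph{fixed} point $y_0$, and then to compute its differential along $\p M$ by a first-variation (Gauss-lemma) argument.

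First I would unwind the definitions. For $\eta\in V$, identity \eqref{eq:group_property_across_boundary} together with \eqref{eq:variation_teta} gives $\gamma(\tilde\ell(\eta);y_0,\eta)=\gamma(\tau(x(\eta),\xi(\eta));x(\eta),\xi(\eta))$, which is exactly the diffeomorphism \eqref{eq:variation_diffeo}. Hence for $z\in W$, writing $\eta=F(z)$, the curve $c_z(t):=\gamma(t;y_0,F(z))$, $t\in[0,(\tilde\ell\circ F)(z)]$, is the unit-speed geodesic from $y_0$ to $z$, and $(\tilde\ell\circ F)(z)$ is its length. In particular $F(z_0)=\eta_0$, $(\tilde\ell\circ F)(z_0)=\tau(x_0,\xi_0)+\tau(y_0,\eta_0)$, and by \eqref{eq:group_property_across_boundary} the exit velocity of $c_{z_0}$ at $z_0$ is $\dot c_{z_0}((\tilde\ell\circ F)(z_0))=\dot\gamma(\tau(x_0,\xi_0);x_0,\xi_0)=:T$, a unit vector. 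Conditions (C1)--(C4) make $z\mapsto F(z)$ and $z\mapsto(\tilde\ell\circ F)(z)$ smooth on $W$, so $\Gamma(s,t):=\gamma(t;y_0,F(z(s)))$ is a smooth two-parameter family of geodesics whenever $s\mapsto z(s)$ is a smooth curve in $\p M$.

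Next, fix $v\in T_{z_0}\p M$ and a smooth curve $z(s)\in\p M$ with $z(0)=z_0$, $z'(0)=v$; set $\alpha(s):=(\tilde\ell\circ F)(z(s))$, so that $\Gamma(s,\alpha(s))=z(s)$. Differentiating at $s=0$ yields $v=J(\alpha(0))+\alpha'(0)\,T$, where $J(t):=\p_s\Gamma(0,t)$ is the Jacobi field along $c_{z_0}$ with $J(0)=0$. The function $t\mapsto\langle J(t),\dot c_{z_0}(t)\rangle_g$ is affine (its second derivative vanishes by the symmetries of the curvature tensor), equals $0$ at $t=0$, and has derivative $\langle\nabla_tJ(0),\eta_0\rangle_g=\langle\nabla_sF(z(s))|_{s=0},\eta_0\rangle_g=0$ at $t=0$, the last equality because $F(z(s))$ has unit $g$-length; hence $\langle J(t),\dot c_{z_0}(t)\rangle_g\equiv0$. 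Taking the $g$-inner product of $v=J(\alpha(0))+\alpha'(0)T$ with $T$ and using $|T|_g=1$ then gives $\alpha'(0)=\langle v,T\rangle_g$, i.e.\ $d(\tilde\ell\circ F)|_{z_0}(v)=\langle v,\dot\gamma(\tau(x_0,\xi_0);x_0,\xi_0)\rangle_g$ for all $v\in T_{z_0}\p M$.

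Finally, since $\grad_{\p M}(\tilde\ell\circ F)|_{z_0}$ is by definition the vector of $T_{z_0}\p M$ representing $d(\tilde\ell\circ F)|_{z_0}$ through the induced metric, and since $\langle v,\dot\gamma\rangle_g=\langle v,\dot\gamma_{z_0}^\top\rangle_g$ for every $v\in T_{z_0}\p M$, we obtain \eqref{eq:tangential_component}. I do not expect a deep obstacle here: the analytic ingredient (first variation of arc length, equivalently the Gauss lemma) is classical, and the absence of conjugate points that makes the relevant maps diffeomorphisms has already been secured in Lemma \ref{lem:existence_of_variation_for_intersection_direction}; the only point requiring genuine care is the bookkeeping linking $\tilde\ell$, $F$, $\tau$ and the geodesic flow so that $\Gamma(s,\alpha(s))=z(s)$ holds with $T$ the claimed exit direction.
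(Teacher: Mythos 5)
Your proposal is correct and follows essentially the same route as the paper: both differentiate the identity $\Gamma(s,\lambda(s))=\sigma(s)$ for the geodesic variation $\Gamma(s,t)=\gamma(t;y_0,F(\sigma(s)))$ fixed at $y_0$, and reduce (\ref{eq:tangential_component}) to the orthogonality of the variation field to $\p_t\Gamma$ at the exit time. The only difference is minor: you obtain that orthogonality via the Jacobi-field computation (the function $t\mapsto \langle J(t),\dot c(t)\rangle_g$ is affine with vanishing value and derivative at $t=0$), whereas the paper uses the first variation of arc length of the unit-speed family; these are classical, equivalent forms of the Gauss lemma.
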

\begin{proof}
Let $\sigma : (-\epsilon, \epsilon) \to W$ be a smooth curve such that $\sigma(0) = z_0$.
We define
\begin{equation*}
\Gamma : (-\epsilon, \epsilon) \times \R \to \R^n,
\quad \Gamma(s, t) := \gamma(t; y_0, F(\sigma(s))).
\end{equation*}
We denote $\lambda := \tilde \ell \circ F \circ \sigma$ and $\tilde \ell_0 := \tilde \ell(\eta_0)$.
By equation (\ref{eq:group_property_across_boundary})
\begin{align*}
&\Gamma(s, \lambda(s)) 
= \gamma(\tau(x(\eta), \xi(\eta)); x(\eta), \xi(\eta))|_{\eta = F(\sigma(s))}
= \sigma(s),
\\&(\p_t \Gamma)(0, \tilde \ell_0) = \dot \gamma(\tilde \ell_0; y_0, \eta_0) = \dot \gamma(\tau(x_0, \xi_0); x_0, \xi_0).
\end{align*}
Hence
\begin{equation*}
\dot \sigma(0) 
= \p_s \Gamma(s, \lambda(s))|_{s = 0} 
= (\p_s \Gamma)(0, \tilde \ell_0) + (\p_t \Gamma)(0, \tilde \ell_0) \lambda'(0).
\end{equation*}
The curve $t \mapsto \Gamma(s, t)$ is a unit speed geodesic for all $s \in (-\epsilon, \epsilon)$.
Hence
\begin{align}
\label{eq:sigma_gamma_inner_product}
(\dot \sigma(0), \dot \gamma(\tau(x_0, \xi_0); x_0, \xi_0))_g
&= \l(( \p_s \Gamma, \p_t \Gamma)_g + \lambda'(0) ( \p_t \Gamma, \p_t \Gamma)_g \r)|_{s = 0, t = \tilde \ell_0}
\\\nonumber&= ( \p_s \Gamma, \p_t \Gamma)_g|_{s = 0, t = \tilde \ell_0} + \lambda'(0).
\end{align}

We define
\def\L{\mathcal L}
\begin{equation*}
\L(s,l) := \int_0^l |\p_t \Gamma(s, t) |_g dt, \quad (s, l) \in (-\epsilon, \epsilon) \times (0, \infty).
\end{equation*}
Then $\L(s, l)$, $s \in (-\epsilon, \epsilon)$ is the length of a unit speed geodesic on the interval $[0, l]$.
Thus $\L(s, l) = l$ for all $s \in (-\epsilon, \epsilon)$.
We may derive an expression for $\p_s \L(s, l)|_{s = 0}$ 
as in \cite[Prop. 6.5]{Lee}
\begin{equation*}
\p_s \L(s, l)|_{s = 0} = \int_0^l (D_t \p_s \Gamma, \p_t \Gamma)_g dt|_{s = 0}.
\end{equation*}
As $t \mapsto \Gamma(s, t)$ is a geodesic, $D_t \p_t \Gamma(s, t) = 0$ and thus
\begin{equation*}
\p_t (\p_s \Gamma, \p_t \Gamma)_g = (D_t \p_s \Gamma, \p_t \Gamma)_g.
\end{equation*}
Moreover, $\Gamma(s, 0) = y_0$ for all $s \in (-\epsilon, \epsilon)$ and thus $\p_s \Gamma(s, 0) = 0$.
Hence
\begin{equation*}
0 = \p_s \L(s, l)|_{s = 0} = \int_0^l \p_t (\p_s \Gamma, \p_t \Gamma)_g dt|_{s = 0}
= (\p_s \Gamma, \p_t \Gamma)_g|_{s = 0, t = l},
\quad l \in (0, \infty).
\end{equation*}

By (\ref{eq:sigma_gamma_inner_product}) we have 
\begin{align*}
(\dot \sigma(0), \gamma_{z_0}^\top)_g 
&= (\dot \sigma(0), \dot \gamma(\tau(x_0, \xi_0); x_0, \xi_0))_g 
\\&= \lambda'(0) = \pair{d(\tilde \ell \circ F)|_{z = z_0}, \dot \sigma(0)}_{T_{z_0}^* \p M \times T_{z_0} \p M}
\\&= (\dot \sigma(0), \grad_{\p M} (\tilde \ell \circ F)|_{z = z_0})_g,
\end{align*}
for all smooth curves $\sigma$ in $W$ such that $\sigma(0) = z_0$,
which proves the claim.
\end{proof}

\begin{theorem}
\label{thm:determination_of_intersection_direction}
The functions $\tau: \p_- SM \to (0, \infty]$ and 
\begin{equation*}
z : D(\Sigma) \to \p M, 
\quad z(x, \xi) := \gamma(\tau(x, \xi); x, \xi) 
\end{equation*}
together with the Riemannian manifold $(\Omega, g|_\Omega)$ determine
\begin{equation*}
\dot \gamma(\tau(x, \xi); x, \xi), \quad (x, \xi) \in D(\Sigma).
\end{equation*}
\end{theorem}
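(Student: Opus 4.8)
The plan is to split the exit direction $\dot\gamma(\tau(x,\xi);x,\xi)$ into its tangential and normal parts with respect to $\p M$, to recover the tangential part from a first-variation identity, and then to fix the normal part by the unit-length condition together with the fact that the geodesic leaves $M$. To apply the variational identity I first reduce to the transverse case: given $(x_0,\xi_0)\in D(\Sigma)$, Lemma \ref{lem:transversal_perturbation_and_tau} produces $(x_j,\xi_j)\in D(\Sigma)$ with $\gamma(\cdot;x_j,\xi_j)$ transverse to $\p M$, $(x_j,\xi_j)\to(x_0,\xi_0)$ and $\tau(x_j,\xi_j)\to\tau(x_0,\xi_0)$; since $\dot\gamma(\tau(x_j,\xi_j);x_j,\xi_j)$ is the image of $(x_j,\xi_j)$ under the geodesic flow at the convergent time $\tau(x_j,\xi_j)$, continuity of the flow gives $\dot\gamma(\tau(x_j,\xi_j);x_j,\xi_j)\to\dot\gamma(\tau(x_0,\xi_0);x_0,\xi_0)$. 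Hence it is enough to treat a transverse $(x_0,\xi_0)\in D(\Sigma)$.

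So fix such an $(x_0,\xi_0)$ and put $z_0:=z(x_0,\xi_0)$. Lemma \ref{lem:existence_of_variation_for_intersection_direction} supplies $(y_0,\eta_0)\in S\Omega$ on the geodesic $\gamma(\cdot;x_0,\xi_0)$, neighborhoods $V\subset S_{y_0}\Omega$ of $\eta_0$ and $W\subset\p M$ of $z_0$, the maps $\eta\mapsto(x(\eta),\xi(\eta))\in D(\Sigma)$ and $\tilde\ell(\eta)=\tau(x(\eta),\xi(\eta))+\tau(y_0,\eta)$ satisfying (C1)--(C4), and we let $F:W\to V$ be the inverse of the diffeomorphism $\eta\mapsto z(x(\eta),\xi(\eta))$ from (C4). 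I claim every ingredient is determined by the data. Since $g$ is known on $\R^n\setminus M$ and $y_0\in\Omega\subset\R^n\setminus M$, the geodesic $\gamma(\cdot;y_0,\eta)$ can be integrated until its first meeting with $\p M$, which determines $\tau(y_0,\eta)$ and the exit pair $(x(\eta),\xi(\eta))\in D(\Sigma)$; next, $\tau(x(\eta),\xi(\eta))$ and $z(x(\eta),\xi(\eta))$ are values of the given functions $\tau$ and $z$. Therefore the diffeomorphism $\eta\mapsto z(x(\eta),\xi(\eta))$, its inverse $F$, and the function $\tilde\ell$ are known, so $\tilde\ell\circ F:W\to(0,\infty)$ is determined. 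The metric $g$ on $\p M$ is known (it is the trace on $\p M$ of the given metric on $\R^n\setminus M$), so $\grad_{\p M}(\tilde\ell\circ F)|_{z=z_0}$ can be formed, and Lemma \ref{lem:variation_for_intersection_direction} identifies it with the orthogonal projection $\dot\gamma_{z_0}^\top$ of $\dot\gamma(\tau(x_0,\xi_0);x_0,\xi_0)$ onto $T_{z_0}\p M$.

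It remains to recover the normal part. The vector $\dot\gamma(\tau(x_0,\xi_0);x_0,\xi_0)$ is a $g$-unit vector at $z_0$; writing it as $\dot\gamma_{z_0}^\top+c\,\nu(z_0)$ with $\nu(z_0)$ the exterior unit normal of $\p M$ at $z_0$ (computable from $g$ near $\p M$), transversality together with Remark \ref{rem:transverse_intersection} shows that $\gamma(t;x_0,\xi_0)\in\Omega$ for $t$ slightly larger than $\tau(x_0,\xi_0)$, whence $c>0$ and $c=(1-\|\dot\gamma_{z_0}^\top\|_g^2)^{1/2}$. This determines $\dot\gamma(\tau(x_0,\xi_0);x_0,\xi_0)$ in the transverse case, and by the reduction step for all $(x,\xi)\in D(\Sigma)$. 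The substance of the argument is carried by the two cited lemmas; the points still needing care are the transversality reduction and the sign of $c$, both dealt with as above.
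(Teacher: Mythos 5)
Your proposal is correct and follows essentially the same route as the paper: recover the tangential part via the gradient formula of Lemma \ref{lem:variation_for_intersection_direction} applied to $\tilde\ell\circ F$ (with $F$ determined from $z$, $\tau$ and the known geometry of $\Omega$), fix the normal part by the unit-length condition with positive exterior-normal component, and pass to general $(x,\xi)\in D(\Sigma)$ by the transverse approximation of Lemma \ref{lem:transversal_perturbation_and_tau} together with continuity of the geodesic flow. Your explicit justification of the sign of the normal component and of why each ingredient is computable from the data matches, and slightly elaborates, the paper's argument.
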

\begin{proof}
The functions $\tau$ and $z$ on $D(\Sigma)$ determine the set $B$
of points $(x_0, \xi_0) \in D(\Sigma)$ such that the conditions (C1)-(C4) 
of Lemma \ref{lem:existence_of_variation_for_intersection_direction} hold
for some $(y_0, \eta_0) \in S \Omega$.

Let $(x_0, \xi_0) \in B$. 
We denote $\zeta_0 := \dot \gamma(\tau(x_0, \xi_0); x_0, \xi_0)$.
The map
\begin{equation*}
\eta \mapsto z(x(\eta), \xi(\eta))
\end{equation*}
determines its local inverse.
Hence $\tau$ and $z$ determine the
function $F$ of Lemma \ref{lem:variation_for_intersection_direction},
and thus they determine $\dot \gamma_{z_0}^\top$ by the formula (\ref{eq:tangential_component}).
As $\zeta_0$ is a unit vector
\begin{equation*}
\zeta_0 = \dot \gamma_{z_0}^\top + (1- |\dot \gamma_{z_0}^\top|^2)^{1/2} \nu_{z_0},
\end{equation*}
where $\nu_{z_0}$ the unit exterior normal vector of $\p M$.
Hence $\tau$ and $z$ determine $\zeta_0$ for all $(x_0, \xi_0) \in B$.

Let $(x_0, \xi_0) \in D(\Sigma)$.
By Lemmata \ref{lem:transversal_perturbation_and_tau} and \ref{lem:existence_of_variation_for_intersection_direction}
there is a sequence $((x_j, \xi_j))_{j=1}^\infty \subset B$ such that 
\begin{equation*}
\lim_{j \to \infty} (x_j, \xi_j) = (x_0, \xi_0), 
\quad \lim_{j \to \infty} \tau(x_j, \xi_j)  = \tau(x_0, \xi_0).
\end{equation*}
Moreover, the functions $\tau$ and $z$ determine the set of such sequences
and thus they determine 
\begin{equation*}
\lim_{j \to \infty} \dot \gamma(\tau(x_j, \xi_j); x_j, \xi_j) 
= \dot \gamma(\tau(x_0, \xi_0); x_0, \xi_0).
\end{equation*}
\end{proof}

Theorems \ref{thm:determination_of_tau}, \ref{thm:determination_of_intersection_point}
and \ref{thm:determination_of_intersection_direction} 
prove Theorem \ref{main_thm} formulated in the introduction.

\medskip
{\bf Acknowledgements.}
The authors were partly supported by Finnish Centre of Excellence in Inverse Problems Research,
Academy of Finland COE 213476.
ML was partly supported also by Mathematical Sciences Research Institute.
LO was partly supported also by Finnish Graduate School in Computational Sciences.

\end{document}